\theoremstyle{plain}
\newtheorem{thm}{Theorem}[section]
\newtheorem{lem}[thm]{Lemma}
\newtheorem{nt}[thm]{Notation}
\newtheorem{prop}[thm]{Proposition}
\newtheorem{ques}[thm]{Question}
\theoremstyle{definition}
\newtheorem{defn}[thm]{Definition}
\newtheorem{exmp}[thm]{Example}
\newtheorem{rem}[thm]{Remark}
\newcommand\dto{\dashrightarrow}
\newcommand\PP{\mathbb P}
\def\iP{{\mathbb P}}
\newcommand\ZZ{\mathbb Z}
\newcommand\NN{\mathbb N}
\newcommand\CC{\mathbb C}
\newcommand\ip{\mathfrak{p}}
\newcommand\IP{\mathfrak{P}}
\newcommand\iq{\mathfrak{q}}
\newcommand\ia{\mathfrak{a}}
\newcommand\im{\mathfrak{m}}
\newcommand\Cc{\mathcal C}
\newcommand{\EE}{\mathcal{E}}
\newcommand\Ss{\mathcal S}
\def\Z{{\mathcal Z}}
\def\R{{\mathcal R}}
\def\Fi{{\mathcal F}}
\def\cO{{\mathcal O}}
\def\C.{C_\bullet}
\def\F.{F_\bullet}
\def\k.{\mathcal{K}_{\bullet}}
\newcommand\inn{\mathfrak{n}}
\newcommand\ra{\rightarrow}
\newcommand\ann{\textnormal{ann}}
\newcommand\coker{\textnormal{coker}}
\newcommand\cohd{\textnormal{cd}}
\newcommand\fin{\textnormal{end}}
\newcommand\codim{\textnormal{codim}}
\newcommand\Spec{\textnormal{Spec}}
\newcommand\sym{\textnormal{Sym}}
\newcommand\indeg{\textnormal{indeg}}
\newcommand\Proj{\textnormal{Proj}}
\newcommand\Ext{\textnormal{Ext}}
\newcommand\Fitt{\textnormal{Fitt}}
\newcommand\tor{\textnormal{Tor}}
\newcommand\Tor{\textnormal{{Tor}}}
\newcommand\ext{\textnormal{Ext}}
\newcommand\Hom{\textnormal{Hom}}
\newcommand\reg{\textnormal{reg}}
\newcommand{\lr}[1]{\langle #1 \rangle}
\newcommand\Af{\mathfrak{A}}
\newcommand\Sc{\mathcal{S}}
\newcommand\MM{\mathbf{M}}
\newcommand\corank{\mathrm{corank}}
\def\Supp{\mathrm{Supp}}
\def\lra{\longrightarrow}
\begin{document}

\title[Fibers of rational maps and elimination matrices]{Fibers of rational maps and elimination matrices: an application oriented approach}

\author{Laurent Busé}
\address{Université Côte d'Azur, Inria, 2004 route des Lucioles, 06902 Sophia Antipolis, France.}
\email{laurent.buse@inria.fr}

\author{Marc Chardin}
\address{Institut de Mathématiques de Jussieu, CNRS \& Sorbonne Université, France}
\email{marc.chardin@imj-prg.fr or Marc.CHARDIN@cnrs.fr}

\dedicatory{Dedicated to David Eisenbud on the occasion of his seventy-fifth birthday.}

\begin{abstract} Parameterized algebraic curves and surfaces are widely used in geometric modeling and their manipulation is an important task in the processing of geometric models. In particular, the determination of the intersection loci between points, pieces of parameterized algebraic curves and pieces of algebraic surfaces is a key problem in this context. In this paper, we survey recent methods based on syzygies and blowup algebras for computing the image and the finite fibers of a curve or surface parameterization, more generally of a rational map. Conceptually, the main idea is to use elimination matrices, mainly built from syzygies, as representations of rational maps and to extract geometric informations from them. The construction and main properties of these matrices are first reviewed and then illustrated through several settings, each of them {highlighting} a particular feature of this approach that combines tools from commutative algebra, algebraic geometric and elimination theory.  
\end{abstract}

\maketitle

\section{Introduction}

In geometric modeling or closely related domains, parameterized curves or surfaces are used intensively. Actually, 2D and 3D geometric objects are often represented by assembling pieces of algebraic rational curves and surfaces that are called rational B\'ezier curves and surfaces. Typical examples goes from the letter fonts stored in a computer to a complex CAD model of mechanical pieces (see e.g.~\cite[Chapter 3]{CoxCBMS} and \cite{Farinbook,Patbook}). In this context, intersection problems between rational curves and surfaces are central questions to be solved. {An important problem} is to decide whether a point belongs to a given rational curve or surface. There is a huge literature on this topic with quite different types of techniques (see \cite[Chapter 5]{Patbook} and references therein). Among them, the development of algebraic methods to turn the parametric representation of an algebraic curve or surface into an implicit representation received a lot of attention. This so-called implicitization problem is quite useful, because the membership problem we just mentioned can be decided by means of an evaluation operation, which is much simpler. Mathematical tools for solving the implicitization problem goes back to the elimination theory as developed by Sylvester, Cayley, Dixon and others (see e.g.~\cite{Dixon,Goldman}). A more modern version, based on resultant theory, can be found in many recent papers and books (see e.g.~\cite{CLO98,GKZ94,Jou97}).

From a practical point of view, the implicitization of a rational curve or surface by means of polynomial equation(s), typically the implicit equation $F(x_0,x_1,x_2)=0$ of a rational planar curve, is not enough for solving intersection problems on geometric models. Indeed, as already mentioned, in this field geometric models are built from pieces of rational curves and surfaces. Therefore, given a point on a parameterized curve or surface it is necessary to determine its pre-image(s) via the parameterization in order to decide if it belongs to the piece that is used (see e.g.~\cite{Shen:2016:LNS:3045889.3064444}). It is clear that an implicit equation does not allow to do that directly as it only {detects} if a point belongs to the entire algebraic curve or surface. In what follows, we will focus on tools and methods from commutative algebra and algebraic geometry that have been developed in order to not only describe implicit equations of the image of a rational map, but also to analyze and determine the fibers of these rational maps, especially the finite fibers.

The setting we will focus on in this survey is the following. Suppose given a rational map $\psi:X \dasharrow \iP^{r-1}_k$, where $X$ will be typically a (product of) projective space(s) of dimension $\leq 3$ ($r=3,4$) over a field $k$, and assume that it is generically finite onto its image. Then, we would like to detect and compute the finite fibers of $\psi$. The tools that we will present are deeply rooted in elimination theory with a particular focus on elimination matrices. The simplest examples of such matrices are the famous Sylvester matrix, Dixon matrices \cite{Dixon} or Macaulay matrices \cite{Mac02,Jou97}. The key observation here is to lift the rational map $\psi$ as a projection from its graph $\Gamma$ to $\iP^{r-1}$, turning this way our problem into the study of a projection (elimination) map. More precisely, there is a diagram
\begin{equation*}
 \xymatrix@1{\Gamma\ \ar[d]_{\pi_1}\ar[rd]^{\pi_2}\ar@{^(->}[r] & X\times_{k}{\iP_k^{r-1}} \\  X  \ar@{-->}[r]^(.4){\psi} & \iP_k^{r-1}}	
\end{equation*}
and  we will describe how the fibers of $\pi_2$ can be analyzed by means of elimination matrices. 

\medskip

The paper is organized as follows. In Section \ref{sec:graph} we set notation and we introduce blowup algebras associated to a rational map $\psi$, namely the Rees and symmetric algebras, and recall their connection to the graph of $\psi$. Section \ref{sec:elimmat} is devoted to elimination techniques. We first show how Fitting ideals associated to some graded components of blowup algebras are connected to the image and fibers of a rational map. Then, we explain how the   choice of the graded components to be considered is governed by the control of the vanishing of some local cohomology modules. {This} section ends with an important result showing that for finite fibers the control of this vanishing can be done globally (and not for each point). 
 In Section \ref{sec:P1} we derive the first consequences of the above-mentioned methods in the case of curve parameterizations, i.e.~the case where the source is a projective line. Besides the computation of the fibers of such maps, we also provide an estimate of the Castelnuovo-Mumford regularity of rational curves as a by-product of our approach. 
Section \ref{sec:Pn-1} deals with the case of hypersurface parameterizations without base point, i.e.~morphisms from $\iP^{n-1}$ to $\iP^{n}$. Here, the emphasis is on the use of non linear equations of the Rees algebra in order to get more compact elimination matrices. Then, the case of surface parameterizations is discussed in Section \ref{sec:dim2}, where the presence of finitely many base points is considered. There is also a detailed discussion on the enumeration and determination of positive dimensional fibers of such parameterizations. Finally, the paper ends with Section \ref{sec:orthoproj} where the challenging case of 3-dimensional generically finite and dominant rational maps is treated. This setting is actually motivated by the computation of the euclidian distance of a point to a parameterized 3D surface, an important problem in geometric modeling. It will also be the occasion to illustrate how to deal with blowup algebras over multigraded rings.

\medskip

Works of David Eisenbud were very inspiring while exploring this subject and the title of his beautiful book named `The Geometry of Syzygies" fits perfectly our approach, even though he probably had in mind other strong and fruitful relations between geometry and syzygies while choosing this title.

\section{Graph of a rational map}\label{sec:graph}

Given a rational map, the determination of its image or of the parameters corresponding to one point of its image (the fiber) relies, at least in the algebraic approach that we present, on the choice of compactifications for the source and the target. It turns out in practice that a good choice for compactifying the source could help in speeding up the computations, this choice being adapted to the type of parametrization that is used. For this reason, in this survey we will focus on rational maps of the form
\begin{eqnarray}\label{eq:psi}
\psi: X & \dasharrow & \iP^{r-1}_k \\ \nonumber
x & \mapsto & 	(f_1 (x):\cdots :f_r(x)),
\end{eqnarray}
where $X$ is a toric variety given in terms of its Cox ring, over a field $k$. A very important case is when $X=\iP^{n-1}_k$ is itself a projective space, but choosing for $X$ a product of projective spaces is also often used. 

Recall that a Cox ring is an extension of the usual construction used in the case of a projective space or a product of such spaces \cite{CoxTV}. It is given by a grading on a polynomial ring $R$ by an abelian group $G$ (that corresponds to the Picard group of $X$) and a specific monomial ideal $B$ that defines the empty set in $X$. The points of $X$ (in the scheme sense, in other words irreducible reduced subvarieties) correspond to $G$-graded prime ideals and there is a one-to-one correspondence between subschemes of $X$ and $G$-graded $R$-ideals that are $B$-saturated. Coherent $G$-graded $R$-modules correspond to coherent sheaves on $X$ and graded pieces of their local cohomology with respect to $B$ correspond to sheaf cohomology of corresponding twists of this coherent sheaf, in a very similar way as in the case of a projective space.

Thus, the rational map $\psi$, as defined by \eqref{eq:psi}, corresponds to a tuple of homogeneous polynomials $(f_1,\ldots ,f_r)$ of {the} same degree $d\in G$. This tuple is uniquely determined, up to multiplication by an element in $k\setminus \{ 0\}$, assuming that the $f_i$'s have no common factor. To understand image and fibers of $\psi$, it is no surprise that the graph of $\psi$ plays a very important role. As we will now see, this graph corresponds to the notion of Rees algebra.

\medskip

\subsection{Graph and Rees algebra}
For an ideal $I$ in a ring $R$, the Rees algebra is defined as the subalgebra $\oplus_{t\geq 0} I^t T^t\subseteq R[T]$. From this description, it is clear that it is a domain if $R$ is a domain. When $I=(f_1,\ldots ,f_r)$ is a graded ideal in a graded ring $R$ and the $f_i$'s are of the same degree $d$, the Rees algebra admits a bigrading as follows. Let $S:=R[T_1,\ldots ,T_r]$ with the bigrading (i.e.~$G\times \ZZ$-grading) $\deg (T_i):=(0,1)$ and $\deg (x):=(\deg (x),0)$ for $x\in R$, then there is a bigraded onto map
\begin{eqnarray*}
S=R[T_1,\ldots ,T_r]& \rightarrow & \R_I :=\oplus_t I^t (td)\\
T_i & \mapsto  &f_i\in I(d)_0=I_d.	
\end{eqnarray*}
This grading gives $(\R_I)_{\mu ,t}=(I^t)_{\mu +td}$ for $t\geq 0$. Moreover, $\R_I =S/\IP$ for some $G\times \ZZ$-graded prime ideal $\IP$ whose elements are called the equations of the Rees algebra $\R_I$.

As a key property, the Rees algebra $\R_I$ defines the graph of $\psi$. To prove it, notice that the defining ideal of the Rees algebra $\IP\subseteq S$ contains the elements $G_{ij}:=f_iT_j-f_jT_i$ for any $i<j$. In particular, off $V(I)$, the Rees algebra coincides with the graph of $\psi$. Thus, the closure $\Gamma$ of the graph in $X \times \iP^{r-1}_k$ is the closure of the variety defined by the $G_{ij}$'s in $(X\setminus V(I))\times \iP^{r-1}_k$, which is the one defined by $\R_I$, as $\R_I$ is a domain. We notice that another more geometrical way to state this, is that the Koszul relations $G_{ij}$ {define} a subscheme of $X \times \iP^{r-1}_k=\Proj_{G\times \ZZ}(S)$ that contains the graph of $\psi$ (more precisely the Zariski closure $\Gamma$ of this graph) as an irreducible component. The following lemma is useful to determine cases where the equations $G_{ij}$ define the Rees algebra (Micali proved that a similar result holds over any commutative ring; see \cite[Th\'eor\`eme 1]{Mic64}).

\begin{lem}\label{eqReesCI}
Let $R$ be a Cohen-Macaulay local domain and $I=(f_1,\ldots ,f_r)$ {be} a complete intersection ideal of codimension $r$. Then the defining ideal $\IP$ of $\R_I$ is generated by the elements $G_{ij}:=f_iT_j-f_jT_i$.
\end{lem}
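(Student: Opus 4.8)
The plan is to establish that $I$ is of linear type, i.e.\ that the canonical surjection from the symmetric algebra $\sym(I)$ onto the Rees algebra $\R_I$ is an isomorphism, and to identify the defining ideal of $\sym(I)$ with $(G_{ij})$. The first move exploits the Cohen--Macaulay hypothesis to upgrade the codimension condition into a regularity condition: since $R$ is Cohen--Macaulay and local, $\grade(I)=\codim(I)=r$, so the $r$ generators $f_1,\ldots,f_r$ of the codimension-$r$ ideal $I$ necessarily form a regular sequence. Thus throughout I may treat $f_1,\ldots,f_r$ as a regular sequence rather than merely a set of $r$ generators of a height-$r$ ideal.

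Next I would compute the presentation of the symmetric algebra. Writing $\sym(I)=S/L$, the ideal $L$ is generated by the linear forms $\sum_i a_iT_i$ attached to the elements $(a_1,\ldots,a_r)$ of the first syzygy module of $(f_1,\ldots,f_r)$. Because $f_1,\ldots,f_r$ is a regular sequence, the Koszul complex on the $f_i$ is a resolution, so this syzygy module is generated by the Koszul syzygies $f_ie_j-f_je_i$; the associated linear forms are exactly the $G_{ij}=f_iT_j-f_jT_i$. Hence $\sym(I)=S/(G_{ij})$, and since $(G_{ij})\subseteq\IP$ the claim reduces to the injectivity of the surjection $\sym(I)\twoheadrightarrow\R_I=S/\IP$.

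This injectivity is the heart of the matter, and the step I expect to be the main obstacle. I would prove it by reduction modulo $I$ followed by an $I$-adic comparison. For a regular sequence, $I/I^2$ is free of rank $r$ over $R/I$ and $\gr_I(R)\cong(R/I)[X_1,\ldots,X_r]$ is a polynomial ring; reducing $\sym(I)\to\R_I$ modulo $I$ yields the natural surjection $\sym_{R/I}(I/I^2)\to\gr_I(R)$ of polynomial rings over $R/I$, which is therefore an isomorphism. Feeding this back through the right-exact sequence $K\otimes_R R/I\to \sym(I)/I\sym(I)\to \R_I/I\R_I\to 0$ shows that the kernel $K$ is contained in $I\cdot\sym(I)$. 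Both algebras carry compatible $I$-adic filtrations on which $\sym(I)\to\R_I$ remains surjective, and the induced graded map is injective by the identification above; hence $K\subseteq\bigcap_n I^n\sym(I)$, and since $R$ is Noetherian local with $I\subseteq\mathfrak m$, Krull's intersection theorem gives $\bigcap_n I^n\sym^t(I)=0$ in each graded component, so $K=0$. The delicate point is exactly controlling this $I$-adic leakage—keeping the associated graded map injective, equivalently showing no torsion is created in the symmetric powers $\sym^t(I)$—and this is where both the regular-sequence structure (polynomiality of $\gr_I(R)$) and the local hypothesis (Krull's theorem) are indispensable. Alternatively, one may note that $(G_{ij})$ is the ideal $I_2(M)$ of $2\times2$ minors of the $2\times r$ matrix $M=\left(\begin{smallmatrix} f_1 & \cdots & f_r\\ T_1 & \cdots & T_r\end{smallmatrix}\right)$, prove that its grade attains the generic value $r-1$, and invoke the Eagon--Northcott resolution to conclude that $S/(G_{ij})$ is Cohen--Macaulay, hence unmixed, of the same height $r-1$ as $\IP$; a localization at the generic point of $R$ then exhibits $\IP$ as its unique minimal prime and yields reducedness, so that $S/(G_{ij})$ is a domain, necessarily equal to $\R_I$.
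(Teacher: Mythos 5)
Your proposal contains two distinct arguments, and they fare very differently. The main one (linear type via $I$-adic filtrations) has a genuine gap at exactly the step you yourself flag as the heart of the matter. The first two steps are fine: in a Cohen--Macaulay local ring, $r$ generators of a height-$r$ ideal form a regular sequence, and acyclicity of the Koszul complex gives $\sym_R(I)=S/(G_{ij})$, so everything reduces to injectivity of $\sym_R(I)\twoheadrightarrow\R_I$. You then correctly deduce $K\subseteq I\cdot\sym_R(I)$ from the isomorphism $\sym_{R/I}(I/I^2)\cong\gr_I(R)$. But next you assert that the map of $I$-adic associated graded objects, $I^n\sym_R(I)/I^{n+1}\sym_R(I)\to I^n\R_I/I^{n+1}\R_I$, is injective for all $n$ ``by the identification above''. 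That identification is only the case $n=0$, and degree-zero injectivity does not propagate: for the surjection $R\to R/(x^2)$ over $R=k[[x]]$ with $I=(x)$, the induced graded map is an isomorphism in degrees $0$ and $1$ yet fails to be injective in degree $2$, and the kernel $(x^2)$ is not contained in $\bigcap_n I^nR$. Worse, in your situation full graded injectivity is (given Krull's intersection theorem) \emph{equivalent} to $K=0$, i.e., to the lemma itself, so asserting it amounts to assuming the conclusion. Nor is there a formal identification of the higher graded pieces to fall back on: the natural surjection $\gr_I(R)\otimes_{R/I}\sym_{R/I}(I/I^2)\to\bigoplus_n I^n\sym_R(I)/I^{n+1}\sym_R(I)$ is not injective (already in bidegree $(1,1)$ its kernel contains the classes of $\bar f_i\otimes\bar f_j-\bar f_j\otimes\bar f_i$), and controlling that kernel is precisely the hard content. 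Closing this hole requires genuine further input: an induction on $r$, Huneke's theorem on d-sequences, or Micali's theorem itself, which the paper cites and which would render the whole computation unnecessary.

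Your fallback alternative in the final sentences, by contrast, is correct --- and it is precisely the paper's proof. The paper obtains the grade bound exactly as you would have to: $(G_{ij})$ and $\IP$ agree off $V(I\cdot S)$, which has codimension $r$, while $\codim \IP=r-1$, and grade equals height since $S$ is Cohen--Macaulay; then Eagon--Northcott makes $S/(G_{ij})$ Cohen--Macaulay, hence unmixed of codimension $r-1$, and agreement with the prime $\IP$ off $V(I\cdot S)$ forces $(G_{ij})=\IP$. So, read charitably, your proposal does end with a complete argument, but it is the paper's own; the route you develop at length is the one that does not (yet) work.
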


\begin{proof} 
Consider the $(2\times r)$-matrix $$M:=\begin{bmatrix} f_1&\cdots &f_r\\ T_1&\cdots &T_r\\ \end{bmatrix}.$$ 
Let $J:=I_2(M)=(G_{ij})$. The ideals $\IP$ and $J$ coincide off $V(I.S)$, which is of codimension $r$ in $S$. As $\IP$ is of codimension $r-1$, it follows that $J$ has depth at least $r-1$.  Hence the Eagon-Nortcott complex {associated to $M$} is a free $S$-resolution of $J:=(G_{ij})$. Thus $S/J$ is Cohen-Macaulay as well, hence unmixed. As $J$ and $\IP$ coincide off $V(I.S)$, it follows that $J=\IP$. 
\end{proof} 

\subsection{Symmetric algebra}\label{subsec:symalg} The elements in $\IP$ of $T$-degree $1$, that we can write $\IP _{*,1}$, correspond to linear forms $\sum_i a_iT_i$ with $a_i\in R$ such that $\sum_i a_i f_i=0$, that is to the first syzygies of the given generators of $I$, written as linear forms of the $T_i$'s with coefficients in $R$, in place of a $r$-tuple of elements in $R$. The surjection $S/(\IP_{*,1})\ra S/\IP $ is an incarnation of the canonical map $\sym_R (I)\ra \R_I$, whose kernel is the non linear part of $\IP$ (the torsion of the symmetric algebra). 

It is important to notice that, locally at a prime $\iq\in \Spec(R)$ where $I$ is a complete intersection, the symmetric and Rees algebras coincide, by Lemma \ref{eqReesCI}. This shows that the schemes in $X \times {\mathbb A}^{r}_k$ defined by $\sym_R (I)$ and $\R_I$ coincide whenever this holds for any $\iq \in \Proj_{G} (R/I)$. In other words, 
\begin{prop}\label{prop:graphci}
If $\Proj_{G} (R/I)$ is locally a complete intersection in $X$, then
$$
\R_I =\sym_R (I)/H^0_B (\sym_R (I)).
$$
\end{prop}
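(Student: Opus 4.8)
The plan is to identify the kernel of the canonical surjection $\sym_R(I)\twoheadrightarrow \R_I$ with the $B$-torsion of $\sym_R(I)$ and then read everything off a single short exact sequence. Writing $L:=(\IP_{*,1})$ for the ideal generated by the linear syzygies, we have $\sym_R(I)=S/L$ and $\R_I=S/\IP$, and the surjection sits in
\begin{equation*}
0 \lra \IP/L \lra S/L \lra S/\IP \lra 0.
\end{equation*}
The asserted equality $\R_I=\sym_R(I)/H^0_B(\sym_R(I))$ is then equivalent to $\IP/L=H^0_B(S/L)$, i.e.\ to the statement that the \emph{torsion of the symmetric algebra} $\IP/L$ is precisely the $B$-torsion submodule of $\sym_R(I)$.

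First I would prove the inclusion $\IP/L\subseteq H^0_B(S/L)$, which is where the complete intersection hypothesis is used. The point is that $\IP/L$ is supported on $V(B)$. To see this, localize the $R$-module structure at a $G$-graded prime $\iq\in\Spec(R)$ with $\iq\not\supseteq B$; these are exactly the primes corresponding to points of $X$. If $\iq\not\supseteq I$, some $f_i$ is invertible in $R_\iq$ and $\sym_{R_\iq}(I_\iq)=\R_{I_\iq}$ trivially. If $\iq\supseteq I$, then $\iq$ defines a point of $\Proj_G(R/I)$, at which $I_\iq$ is a complete intersection by hypothesis; since $R_\iq$ is a regular (hence Cohen--Macaulay) local domain, Lemma~\ref{eqReesCI} applies and $\sym_{R_\iq}(I_\iq)\cong\R_{I_\iq}$, complete intersections being of linear type. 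As localization commutes with $\sym$, with $\R$, and with the formation of kernels, we get $(\IP/L)_\iq=0$ for every such $\iq$. Hence the finitely generated $S$-module $\IP/L$ is annihilated by some power $B^n$, so $\IP/L=H^0_B(\IP/L)\subseteq H^0_B(S/L)$.

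For the reverse inclusion I would use that $\R_I$ has no $B$-torsion. Indeed $\R_I=\oplus_t I^t(td)$ is a domain containing $R$ (as the part of $T$-degree $0$), so $\IP\cap R=0$; any nonzero element of $B\subseteq R$ is therefore a nonzerodivisor on $\R_I$, whence $H^0_B(S/\IP)=0$. Applying the left-exact functor $H^0_B(-)$ to the sequence above yields
\begin{equation*}
0 \lra H^0_B(\IP/L) \lra H^0_B(S/L) \lra H^0_B(S/\IP)=0,
\end{equation*}
so that $H^0_B(S/L)=H^0_B(\IP/L)=\IP/L$, the last equality being the previous paragraph. Therefore $\sym_R(I)/H^0_B(\sym_R(I))=(S/L)/(\IP/L)=S/\IP=\R_I$, as claimed.

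The cohomological bookkeeping (left-exactness, the identification $\sym_R(I)=S/L$, the vanishing of $H^0_B(\R_I)$) is routine; the main obstacle is the local step. There one must be careful that ``locally a complete intersection'' refers to the ideal $I_\iq$ at points of $\Proj_G(R/I)$ and may involve fewer than $r$ minimal generators, so the cleanest justification is to invoke that complete intersection ideals are of linear type (Lemma~\ref{eqReesCI} together with Micali's theorem) rather than to manipulate the generators $G_{ij}$ directly. This is exactly the geometric content already recorded before the statement, namely that under the hypothesis the subschemes of $X\times\mathbb{A}^r_k$ defined by $\sym_R(I)$ and $\R_I$ coincide.
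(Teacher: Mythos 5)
Your proof is correct and takes essentially the same route as the paper: the paper's argument (the paragraph preceding the proposition) is exactly the local coincidence of $\sym_R(I)$ and $\R_I$ at complete intersection primes via Lemma~\ref{eqReesCI}, which you formalize by showing the kernel $\IP/L$ is supported on $V(B)$ while $\R_I$, being a domain, has no $B$-torsion. Your write-up merely makes explicit the left-exactness bookkeeping that the paper leaves implicit in its phrase ``in other words.''
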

We notice that the case $\Proj_{G} (R/I)=\emptyset$, i.e.~$I$ contains a power of $B$, is contained in the above proposition.

\section{Elimination matrices and fibers of projections}\label{sec:elimmat}

In this section we assume that $X$ is a product of projective spaces and that the subscheme $\Gamma \subset X\times \iP_k^{r-1}$ is given by {finitely many} equations that are homogeneous with respect to the {$\ZZ^s$-}grading of the coordinate ring $R$ of $X$. We denote by $J \subset S=R[T_1,\ldots,T_r]$ the ideal generated by these equations, by $\Sc$ the quotient ring $R[T_1,\ldots,T_r]/J$ and by $B$ the irrelevant ideal associated to $X$. We also set $A:=k[T_1,\ldots,T_r]$.

\subsection{Elimination ideal}
Consider the canonical projection $\pi_2 : X\times \iP_k^{r-1} \rightarrow \iP_k^{r-1}$. It is a classical result that $\pi_2(\Gamma)$ is closed in $\iP_k^{r-1}$ and is defined by the elimination ideal
$$ \Af(J):=(J:B^\infty) \cap A.$$  
{An interesting fact is that the elimination ideal} can be connected to the graded components of the quotient ring $\Sc$. 

\begin{lem}\label{lem:annFitt} Assume that $\nu \in \NN^s$ is such that $H^0_B(\Sc)_\nu=0$, then $\Af(J)=\ann_A(\Sc_\nu)$. Moreover, there exists an integer $n_\nu$ such that
	$$ \Af(J)^{n_\nu} \subseteq \Fitt^0_A (\Sc_\nu) \subseteq  \Af(J)$$
where $\Fitt^0_A (\Sc_\nu)$ denotes the initial Fitting ideal of the $A$-module $\Sc_\nu$.
\end{lem}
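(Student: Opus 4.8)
The plan is to establish the first assertion $\Af(J)=\ann_A(\Sc_\nu)$ and then deduce the two inclusions of the second assertion from it by invoking the standard comparison between an initial Fitting ideal and the annihilator of a finitely generated module. I would treat the two inclusions defining $\Af(J)=\ann_A(\Sc_\nu)$ separately, noting in advance that only one of them actually uses the vanishing hypothesis.

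For the inclusion $\Af(J)\subseteq \ann_A(\Sc_\nu)$, fix $a\in\Af(J)=(J:B^\infty)\cap A$, so that $aB^N\subseteq J$ for some $N$, and let $m\in S$ be homogeneous of $R$-degree $\nu$. Then $(am)B^N=m(aB^N)\subseteq J$, so $am\in (J:B^\infty)$, and since $am$ again has $R$-degree $\nu$ it lies in $(J:B^\infty)_\nu$. The hypothesis $H^0_B(\Sc)_\nu=\big((J:B^\infty)/J\big)_\nu=0$ forces $(J:B^\infty)_\nu=J_\nu$, hence $am\in J$ and $a$ kills the class of $m$ in $\Sc_\nu$. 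As $m$ is arbitrary, $a\in\ann_A(\Sc_\nu)$. This is the only step where the vanishing hypothesis intervenes.

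For the reverse inclusion I would use only that $\nu\in\NN^s$. Take $a\in\ann_A(\Sc_\nu)$, so $aS_{(\nu,\ast)}\subseteq J$. Since $X$ is a product of projective spaces, $R_\mu=R_{\mu-\nu}R_\nu$ whenever $\mu\geq\nu$ componentwise, whence $S_{(\mu,\ast)}=R_{\mu-\nu}S_{(\nu,\ast)}$ and therefore $aS_{(\mu,\ast)}=R_{\mu-\nu}(aS_{(\nu,\ast)})\subseteq J$ for every $\mu\geq\nu$. Now choose $N$ large enough that every element of $B^N$ has $R$-degree $\geq\nu$, which is possible because the powers of the irrelevant ideal of a product of projective spaces are concentrated in arbitrarily high multidegrees. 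Then $aB^N\subseteq\sum_{\mu\geq\nu}aS_{(\mu,\ast)}\subseteq J$, so $a\in(J:B^\infty)\cap A=\Af(J)$. Combining the two inclusions gives $\Af(J)=\ann_A(\Sc_\nu)$.

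Finally, for the second assertion I would observe that $\Sc_\nu$ is a finitely generated $A$-module: indeed $S_\nu=R_\nu\otimes_k A\cong A^{\dim_k R_\nu}$ since $R_\nu$ is a finite-dimensional $k$-vector space, and $\Sc_\nu$ is a quotient of $S_\nu$, hence generated by at most $\dim_k R_\nu$ elements over $A$. The standard relations between the initial Fitting ideal and the annihilator of a module $M$ generated by $n$ elements, namely $\Fitt^0_A(M)\subseteq\ann_A(M)$ and $(\ann_A M)^{n}\subseteq\Fitt^0_A(M)$, then give at once $\Fitt^0_A(\Sc_\nu)\subseteq\ann_A(\Sc_\nu)=\Af(J)$ and $\Af(J)^{n_\nu}\subseteq\Fitt^0_A(\Sc_\nu)$ with $n_\nu=\dim_k R_\nu$. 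I expect the only genuinely delicate point to be the multigraded bookkeeping in the reverse inclusion, specifically the factorization $R_\mu=R_{\mu-\nu}R_\nu$ for $\mu\geq\nu$ together with the claim that $B^N$ sits in multidegrees $\geq\nu$ for $N\gg0$; the forward inclusion and the Fitting-ideal comparison are then formal.
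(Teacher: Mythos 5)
Your proof is correct, but the way you establish the key equality $\Af(J)=\ann_A(\Sc_\nu)$ is genuinely different from the paper's. The paper passes to the quotient $\Sc':=\Sc/H^0_B(\Sc)$, identifies $(\Sc')_0=A/\Af(J)$, and uses that $\Sc'$ is generated over $(\Sc')_0$ by the variables of $R$ and is saturated with respect to each ideal $B_i$ to produce a nonzerodivisor of degree $\nu$ on $\Sc'$; the equality then follows by comparing $\ann_A((\Sc')_\nu)$ with $\ann_A((\Sc')_0)$, and the hypothesis $H^0_B(\Sc)_\nu=0$ only serves to identify $\Sc_\nu$ with $(\Sc')_\nu$. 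You instead work directly with the colon ideal: reading the hypothesis as $(J:B^\infty)_{(\nu,*)}=J_{(\nu,*)}$ gives $\Af(J)\subseteq\ann_A(\Sc_\nu)$, while the reverse inclusion follows from the factorization $S_{(\mu,*)}=R_{\mu-\nu}S_{(\nu,*)}$ for $\mu\geq\nu$ together with the fact that $B^N$ sits in multidegrees $\geq\nu$ once $N\geq\max_i\nu_i$ (since $B$ is generated in multidegree $(1,\dots,1)$). Each approach buys something: the paper's nonzerodivisor argument is shorter and yields $\ann_A((\Sc')_\nu)=\Af(J)$ for every $\nu\in\NN^s$ at the level of the saturation, but the existence of a nonzerodivisor of a \emph{prescribed} multidegree rests on graded prime avoidance, which is delicate over a finite field; your argument is purely element-wise, valid over any field, and isolates cleanly that the vanishing hypothesis is needed only for the inclusion $\Af(J)\subseteq\ann_A(\Sc_\nu)$, the opposite inclusion being unconditional. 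Both proofs conclude identically, via the standard comparison $(\ann_A M)^{n}\subseteq\Fitt^0_A(M)\subseteq\ann_A(M)$ for an $A$-module $M$ generated by $n$ elements, applied with $n_\nu=\dim_k R_\nu$ generators of $\Sc_\nu$.
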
 
\begin{proof} $\Sc ':=\Sc /H^0_B(\Sc)$ is generated over $A/\Af(J)=(\Sc ')_0$ by the variables of $R$ and, as $B$ is contained in the ideal generated by any of {these} $s$ sets of variables of $R$ ($B$ is the intersection of the ideals $B_i$ generated by the sets of variables), $\Sc '$ is saturated with respect to $B_i$. In particular, for any $\nu\in \NN^s$ there exists a non zero divisor of degree $\nu$ on $\Sc'$, proving the claimed equality. The inclusions derive from any finite presentation of the $A$-module $\Sc_\nu$.	
\end{proof}

Now, for any $\nu\in \NN^s$ such that $H^0_B(\Sc)_\nu=0$, {denote by} $\MM_\nu$ a presentation matrix of the $A$-module $\Sc_\nu$ :
$$ A^b \xrightarrow{ \MM_\nu} A^a \rightarrow \Sc_\nu \rightarrow 0.$$
As a consequence of Lemma \ref{lem:annFitt}, for any point $p\in \iP_k^{r-1}$ the corank of $\MM_\nu(p)$ (the evaluation of $\MM_\nu$ at the point $p$) is positive if and only if $p\in \pi_2(\Gamma)$. This result can actually be refined as follows. 

\subsection{Finite fibers}
Let $p\in \iP_k^{r-1}$ and denote by $\Sc_p$ the specialization of $\Sc$ at $p$. From the definition of $\MM_\nu$ and the stability of Fitting ideals under arbitrary base change (which is not the case for annihilators), for any $\nu$,  the corank of $\MM_\nu(p)$ is nothing but the Hilbert function of $\Sc_p$ in degree $\nu$. Thus, if the fiber of $p$ under $\pi_2$ is finite then its Hilbert polynomial is a constant which is equal to the degree of this fiber. The next result shows that the degree $\nu$ such that the Hilbert function of $\Sc_p$ reaches its Hilbert polynomial is globally controlled by the vanishing of the local cohomology of $\Sc$.

\begin{thm}\label{specizero} Let $\nu\in \NN^s$ be such that $H^0_B(\Sc)_\nu=H^1_B(\Sc)_\nu=0$ and suppose given $p\in \iP_k^{r-1}$ such that the fiber $\pi_2^{-1}(p)\subset X$ is finite (a scheme of dimension zero or empty), then 
	$$\corank (\MM_\nu(p)) = \deg (\pi_2^{-1}(p)).$$
\end{thm}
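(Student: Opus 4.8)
The plan is to reduce the statement to a computation of Hilbert functions, and then to transfer a cohomological vanishing from $\Sc$ to the fiber ring by means of the regular sequence cutting out $p$.

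First I would record the base-change step, which is essentially contained in the discussion preceding the theorem. Writing $\Sc_p=\Sc\otimes_A\kappa(p)$, the matrix $\MM_\nu(p)=\MM_\nu\otimes_A\kappa(p)$ presents the $\kappa(p)$-vector space $\Sc_\nu\otimes_A\kappa(p)=(\Sc_p)_\nu$, since the action of $A$ preserves the $R$-grading. Hence
$$\corank(\MM_\nu(p))=\dim_{\kappa(p)}(\Sc_p)_\nu,$$
the value at $\nu$ of the Hilbert function of the fiber ring $\Sc_p$, whose associated scheme is $\Proj_G(\Sc_p)=\pi_2^{-1}(p)$. Because this fiber is finite, the sheaf $\widetilde{\Sc_p}$ is supported on a zero-dimensional subscheme of $X$, so that $H^i_B(\Sc_p)_\nu=0$ for $i\geq 2$ and $\dim_k H^0(X,\widetilde{\Sc_p}(\nu))=\deg(\pi_2^{-1}(p))$ independently of $\nu$. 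The four-term exact sequence relating $\Sc_p$, its sheafification, and $H^0_B,H^1_B$ then gives
$$\dim_{\kappa(p)}(\Sc_p)_\nu=\deg(\pi_2^{-1}(p))+\dim_k H^0_B(\Sc_p)_\nu-\dim_k H^1_B(\Sc_p)_\nu,$$
so the theorem is equivalent to the vanishing $H^0_B(\Sc_p)_\nu=H^1_B(\Sc_p)_\nu=0$, and it is here that the global hypothesis on $\Sc$ must enter.

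The core of the argument is therefore to transfer the vanishing $H^0_B(\Sc)_\nu=H^1_B(\Sc)_\nu=0$ to the fiber. After a linear change of the $T_i$, I would take $r-1$ linear forms $L_1,\dots,L_{r-1}\in A$ generating the homogeneous ideal of $p$; they form a regular sequence in $A$ and, crucially, have $R$-degree $0$, so that multiplication by each $L_j$ preserves the component of $R$-degree $\nu$. Applying $H^\bullet_B$ to the Koszul complex $K_\bullet(L_1,\dots,L_{r-1};\Sc)$ produces two spectral sequences abutting to the same hypercohomology $\mathbb{H}^\bullet_B(K_\bullet)$. In the first, the $E_1$-rows are copies (up to $T$-shifts) of $H^i_B(\Sc)_\nu$; the rows $i=0,1$ vanish by hypothesis, and on the rows $i\geq 2$ the $d_1$-differential is the Koszul differential of $L_1,\dots,L_{r-1}$ acting on the finitely generated graded $A$-module $H^i_B(\Sc)_\nu$. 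In the second, $E_2^{i,q}=H^i_B\big(\tor^A_q(\Sc,A/(L_1,\dots,L_{r-1}))\big)$: every Koszul homology module is annihilated by $(L_1,\dots,L_{r-1})$, hence is a module over the fiber ring with zero-dimensional support in $X$, so its local cohomology is concentrated in degrees $i=0,1$. This collapses the second spectral sequence to two columns and exhibits $H^0_B(\Sc_p)_\nu$ and $H^1_B(\Sc_p)_\nu$ as the only contributions to $\mathbb{H}^0_B$ and $\mathbb{H}^1_B$ in degree $\nu$.

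The main obstacle is then to show that the first spectral sequence has no surviving term in degree $\nu$, so that these hypercohomology modules vanish. The decisive input is finiteness of the fiber: since $H^i_B(\Sc_p)_\nu=0$ for $i\geq 2$, a base-change (semicontinuity) argument for local cohomology along $\pi_2$ forces $p\notin\Supp_A H^i_B(\Sc)_\nu$ for every $i\geq 2$. The Koszul homology of $L_1,\dots,L_{r-1}$ on such a module is supported on $V(L_1,\dots,L_{r-1})=\{p\}$, hence vanishes after passage to the actual fiber, killing the rows $i\geq 2$ as well; the remaining, $\mathfrak{m}_A$-torsion contributions are annihilated upon dehomogenizing in the surviving variable, which is exactly the identification of $\Sc\otimes_A\kappa(p)$ with the dehomogenization of the cone $\Sc/(L_1,\dots,L_{r-1})\Sc$. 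Once this is in place the hypercohomology vanishes in degree $\nu$ and the second spectral sequence yields $H^0_B(\Sc_p)_\nu=H^1_B(\Sc_p)_\nu=0$, proving the claim. I expect the delicate points to be precisely the support statement $p\notin\Supp_A H^{\geq 2}_B(\Sc)_\nu$ (the principle that positive local cohomology detects positive-dimensional fibers) together with this bigraded bookkeeping; the rest is formal manipulation of the two spectral sequences.
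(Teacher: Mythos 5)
Your proposal is correct and is essentially the paper's own argument: both reduce, via base change of the presentation matrix and the Grothendieck--Serre formula (plus Grothendieck vanishing for the finite fiber), to transferring the vanishing of $H^0_B$ and $H^1_B$ in degree $\nu$ from $\Sc$ to the fiber ring, and both achieve this transfer by comparing the two spectral sequences with terms $H^p_B(\tor_q^A(\Sc,N))$ and $\tor_q^A(H^p_B(\Sc),N)$ coming from the \v{C}ech--Koszul double complex, together with the observation that the $\tor$ modules are supported on the finite fiber --- your appeal to ``positive local cohomology detects positive-dimensional fibers'' is exactly the cohomological-dimension statement the paper extracts from these same spectral sequences following \cite{Cha12}. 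The only point needing repair is that your linear forms $L_1,\dots,L_{r-1}$ generate the homogeneous ideal of $p$ only when $p$ is a $k$-rational closed point, whereas the paper localizes at $\ip$ and resolves the residue field $k_p$, covering arbitrary points; in your setup this case is recovered by the flat base change $k\to\kappa(p)$ (or $\bar{k}$), under which the hypotheses, the corank, and the degree of the fiber are all preserved.
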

\begin{proof}[Sketch of proof] First, Grothendieck-Serre's formula (see e.g.~\cite[Proposition 4.26]{BotCh}) shows that, for any $\nu\in \ZZ^s$, 
$$
\dim_k (\Sc_p)_\nu =\deg (\pi_2^{-1}(p))+\sum_{{i\geq 0}} (-1)^i\dim_k H^i_B(\Sc_p)_\nu ,
$$
and by Grothendieck vanishing theorem $H^i_B(\Sc_p)=0$ for $i>1$, as $\pi_2^{-1}(p)$ is of dimension zero or empty (in which case it also holds for $i=1$).

Let $\ip$ be the graded ideal defining {the point} $p$ and {let} $k_p:=A_\ip/\ip A_\ip$ {be} its residue field. {Since} $H^i_B(\Sc \otimes_A A_\ip)_\nu =H^i_B(\Sc )_\nu \otimes_A A_\ip$, we may replace $A$ by $A_\ip$, $S$ by $S\otimes_A A_\ip$ and $\Sc$ by $\Sc \otimes_A A_\ip$ to assume that $A$ is local with residue field $k_p$.

Next one uses a construction as in \cite[Lemma 6.2]{Cha12} to show the existence, for any finitely generated $S$-module $M$ and $A$-module $N$, of two spectral sequences with same abutment and second terms :
$$
{^{'}_2}E^p_q=H^p_B(\tor_q^A(M,N)) \quad {\rm and}\quad {^{''}_2}E^p_q=\tor_q^A(H^p_B(M),N).
$$
As the support in $S$ of $\tor_q^A(M,N)$ sits inside the one of $M\otimes_A N$, one deduces by choosing  $M:=\Sc$   and $N:=k_p$, first that $\max \{ i\ \vert H^i_B(\Sc )\not= 0\} =\max \{ i\ \vert H^i_B(\Sc \otimes_A k_p)\not= 0\}=1$, and then that $H^1_B(\Sc)_\nu=0\ \Leftrightarrow H^1_B(\Sc \otimes_A k_p)_\nu =0$ and $H^0_B(\Sc)_\nu=H^1_B(\Sc)_\nu =0\ \Rightarrow H^0_B(\Sc \otimes_A k_p)_\nu =0$.
\end{proof}

\section{When the source is $\iP^{1}$}\label{sec:P1}

In this section we focus on maps whose source is a projective line, which covers the case of rational curves embedded in a projective space of arbitrary dimension. We first describe how fibers of curve parameterizations can be obtained from elimination matrices, following the methods introduced in Section \ref{sec:elimmat}. Then, in the second part of this section we derive an upper bound for the Castelnuovo-Mumford regularity of rational curves by means of similar elimination techniques. The definition of Castelnuovo-Mumford regularity is also quickly reviewed.    

\subsection{Matrix representations}\label{subsec:mrepcurves}
{For simplicity, we deviate from general notations} and write $R:=k[x,y]$. We suppose given a rational map
\begin{eqnarray*}
	\psi: \iP^1_k & \rightarrow & \iP^{r-1}_k \\
	(x:y) & \mapsto & (f_1:\cdots:f_r) 	
\end{eqnarray*}
where the $f_i$'s are homogeneous polynomials in $R$ of degree $d\geq 1$ without common factor. Then, the image of $\psi$ is a curve $\Cc\subset \iP^{r-1}_k$ and $\psi$ is a morphism, in other words it has no base point. A classical intersection theory formula yields the equality
$$\deg(\psi)\deg(\Cc)=d.$$ 

In this setting, the equations of the symmetric algebra of $I=(f_1,\ldots,f_r)$ define the graph of $\psi$, because, as we assume that the $f_i$'s have no common factor, $\psi$ is a morphism. By Hilbert-Burch Theorem, these equations have the following nice structure: these exist non-negative integers {$\mu_1,\ldots,\mu_{r-1}$} such that {$\sum_{i=1}^{r-1} \mu_i=d$} and $R/I$ has a minimal finite free resolution of the form 
$$
\xymatrix{ 0\ar[r]&\sum_{i=1}^{r-1}R(-d-\mu_i )\ar^(.6){\Phi}[r] &R(-d)^r\ar[r]&I \ar[r]&0.}
$$
The columns of the map $\Phi$ yield a basis of the first syzygy module of $I$. It follows that the forms $L_1,\ldots,L_{r-1}$ defined as 
$$
{\begin{bmatrix}L_1\\ \vdots\\ L_{r-1}\\ \end{bmatrix}}= {^t\Phi} {\begin{bmatrix}T_1\\ \vdots\\ T_r\\ \end{bmatrix}}
$$
are defining equations of the symmetric algebra $\sym_R (I)$. Therefore, for any integer $\nu$ the multiplication map 
$$\oplus_{i=1}^{r-1} R_{\nu-\mu_i}[T_1,\ldots T_r](-1)^{r-1} \xrightarrow{(L_1,\ldots,L_{r-1})} R_{\nu}[T_1,\ldots T_r]$$
is a presentation of $\sym_R (I)_{\nu}$ by free {graded} $A$-modules (recall $A=k[T_1,\ldots,T_r]$). We denote by $\MM_\nu$ its matrix in some bases and set $B:=(x,y)$.

\begin{prop}\label{prop:curve} For all $\nu \geq \max_{i\not= j}\{ \mu_i +\mu_j\}$, $H^0_B(\sym_R (I))_\nu=H^1_B(\sym_R (I))_\nu=0$. Hence the matrix $\MM_\nu$ satisfies
	$$\corank(\MM_\nu(p)) =\deg(\psi^{-1}(p)),\quad \forall p\in \iP^{r-1}_k.$$
\end{prop}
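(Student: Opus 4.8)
The plan is to obtain the corank formula directly from Theorem \ref{specizero}, so that the substance of the proof is the stated vanishing of local cohomology. Since the $f_i$ have no common factor, $\psi$ is a morphism, the graph $\Gamma$ is one-dimensional and isomorphic to $\iP^1_k$ via $\pi_1$; hence every fiber $\pi_2^{-1}(p)$ is a zero-dimensional (possibly empty) subscheme of $\iP^1_k$, equal to $\psi^{-1}(p)$ as a scheme. As there is no base point, $\Proj_G(R/I)=\emptyset$ and Proposition \ref{prop:graphci} applies, so $\sym_R(I)$ defines $\Gamma$ up to its $B$-torsion $H^0_B(\sym_R(I))$; thus $\Sc:=\sym_R(I)$ is exactly the algebra to which Theorem \ref{specizero} is to be applied. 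Once $H^0_B(\Sc)_\nu=H^1_B(\Sc)_\nu=0$ is known, Theorem \ref{specizero} (with $s=1$) yields $\corank(\MM_\nu(p))=\deg(\pi_2^{-1}(p))=\deg(\psi^{-1}(p))$ for every $p$, which is the assertion.

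For the vanishing I would work with the Koszul complex $K_\bullet$ of the linear forms $L_1,\dots,L_{r-1}$ over $S=R[T_1,\dots,T_r]$, where $\deg L_i=(\mu_i,1)$ and $H_0(K_\bullet)=\sym_R(I)$. The key geometric input is the absence of base points: since $I$ is $(x,y)$-primary, inverting $x$ or $y$ turns $I$ into the unit ideal, so on the two charts covering $\Spec S\setminus V(B)$ the $L_i$ form a regular sequence (the matrix $\Phi$ has maximal rank there). Consequently $K_\bullet$ is exact off $V(B)$, and the higher Koszul homologies $H_p(K_\bullet)$, $p\geq 1$, are $B$-torsion $S$-modules. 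This is the step I expect to be the crux: for $r\geq 4$ the $L_i$ are \emph{not} a regular sequence in $S$ (the locus $V(B)$ contributes an extra component of dimension $r$), so $K_\bullet$ is not a resolution and one cannot simply read $H^i_B(\Sc)$ off it; the torsion must be tracked.

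To organize this I would compare the two spectral sequences of the double complex $\check{C}^\bullet_B\otimes_S K_\bullet$, where $\check{C}^\bullet_B$ is the \v{C}ech complex on $x,y$. Taking local cohomology first and using $H^i_B(S)=0$ for $i\neq 2$ together with $H^2_B(S)=H^2_B(R)\otimes_k A$, a single row survives and the abutment is computed by the complex $G_\bullet:=K_\bullet\otimes_S H^2_B(S)$, giving $\mathbb{H}^n=H_{2-n}(G_\bullet)$. Taking Koszul homology first, the torsion terms $H_p(K_\bullet)$ ($p\geq 1$) carry no higher local cohomology and sit in total cohomological degrees $\leq -1$; since the \v{C}ech complex on $x,y$ has length two (so $H^i_B=0$ for $i\geq 3$), no differential can reach or leave the entries $H^0_B(\Sc)$ and $H^1_B(\Sc)$, which therefore survive and are the unique contributions in total degrees $0$ and $1$. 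Hence $\mathbb{H}^0=H^0_B(\Sc)$ and $\mathbb{H}^1=H^1_B(\Sc)$, so that $H^0_B(\Sc)\cong H_2(G_\bullet)$ and $H^1_B(\Sc)\cong H_1(G_\bullet)$.

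Finally I would read off the degrees. Over $R=k[x,y]$ one has $H^2_B(R)_m\neq 0$ exactly for $m\leq -2$, so the graded piece $(G_p)_\nu$ involves $H^2_B(R)_{\nu-\mu_{i_1}-\cdots-\mu_{i_p}}$ and is nonzero only when $\mu_{i_1}+\cdots+\mu_{i_p}\geq \nu+2$. As $H_2(G_\bullet)$ is a subquotient of $G_2$ and $H_1(G_\bullet)$ a subquotient of $G_1$, we get $(G_2)_\nu=0$, hence $H^0_B(\Sc)_\nu=0$, as soon as $\nu\geq \max_{i\neq j}\{\mu_i+\mu_j\}-1$, and $(G_1)_\nu=0$, hence $H^1_B(\Sc)_\nu=0$, as soon as $\nu\geq \max_i\mu_i-1$. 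Since all $\mu_i\geq 1$ by minimality of the resolution, $\max_i\mu_i\leq \max_{i\neq j}\{\mu_i+\mu_j\}$, so both vanishings hold for $\nu\geq \max_{i\neq j}\{\mu_i+\mu_j\}$, which is the stated range; combined with the first paragraph this completes the proof.
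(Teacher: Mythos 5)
Your proof is correct and takes essentially the same route as the paper: the paper likewise reduces the corank statement to Theorem \ref{specizero} and obtains the vanishing of $H^0_B$ and $H^1_B$ from the comparison of the two spectral sequences of the \v{C}ech--Koszul double complex of $L_1,\ldots,L_{r-1}$, using that these forms are a complete intersection off $V(B)$ (phrased there as a complete intersection in $\PP^1_k\times\PP^{r-1}_k$). The only difference is that you carry out the degeneration and the degree bookkeeping on $H^2_B(R)$ explicitly, where the paper simply cites \cite[\S 2.10]{Jou80}.
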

\begin{proof} As $L_1,\ldots,L_{r-1}$ form a complete intersection in $\PP^1_k\times \PP^{r-1}_k$, the claimed vanishing of the local cohomology modules follows from the comparison of two spectral sequences associated to the \v{C}ech-Koszul double complex of $L_1,\ldots,L_{r-1}$ (see \cite[\S 2.10]{Jou80}). The property on the corank of the matrix $\MM_\nu(p)$ then follows from Theorem \ref{specizero}. 
\end{proof}

\begin{exmp}[Plane curves] In the case $r=3$, the map $\psi$ defines a curve $\Cc$ in the projective plane and $\mu_1+\mu_2=d$. In suitable bases, the matrix $\MM_{d-1}$ is nothing but the Sylvester matrix associated to $L_1$ and $L_2$ and we recover here classical results. In particular, its determinant is equal to $F^{\deg(\psi)}$, {where $F$ is an implicit equation of $\Cc$}. 
\end{exmp}

\begin{exmp}[Twisted cubic] Consider the following map whose image is the twisted cubic 
	\begin{eqnarray*}
		\psi : \iP^1_k & \rightarrow & \iP^3_k \\
		(x:y) & \mapsto & (x^3:x^2y:xy^2:y^3).
	\end{eqnarray*} 
In this case $\mu_1=\mu_2=\mu_3=1$ and we get the matrix
$$\MM_{1}=
\left(
\begin{array}{cccc}
	-T_2 & -T_3 & -T_4 \\
	T_1 & T_2 & T_3
\end{array}
\right).$$
Applying Proposition \ref{prop:curve}, we deduce that the twisted cubic is a smooth curve and $\psi$ is an isomorphism, because $\corank(\MM_1(p))\leq 1$ for any $p\in \iP^3_k$.
\end{exmp}

\subsection{Regularity estimate for rational curves}

As illustrated above, presentation matrices of the graded components of the symmetric algebra $\sym_R (I)$ provide an interesting computational tool to manipulate rational curves. Let us recall the notion of Castelnuovo-Mumford regularity, which controls degrees of generators of an ideal, and much more : degrees of all syzygies (as we will see) and degrees of generators of Groebner bases for reverse lexicographic order and general coordinates. 

\subsubsection{The four equivalent definitions of Castelnuovo-Mumford regularity.}

Assume $A$ is a Noetherian ring (hence $S=A[x_1,\ldots, x_n]$ as well) and $M$ is a finitely generated graded $S$-module. And assume further that $\deg (x_i)=1$ for all $i$ and $\deg (a)=0$ for $a\in A$ (standard grading). In what follows, {we will denote by $K_\bullet (f;N)$} the Koszul complex associated to a sequence of elements  $f$ on a graded module $N$. 

\medskip

As $K_\bullet (x_1,\ldots ,x_n;S)$ is a resolution of the $S$-module $A=S/(x_1,\ldots, x_n)$, by definition of $\Tor$ functors one has first
$$
\Tor_i^S (M,A)\simeq H_i(K_\bullet (x_1,\ldots ,x_n;M)).
$$
By Noetherianity, $\Tor_i^S (M,A)$ is a finitely generated graded $A$-module (hence not zero in only finitely many degrees). If $N$ is a graded module, we set
$$
\fin (N):=\sup \{ \mu\ \vert\ N_\mu \not= 0\}\in \ZZ\cup\{ \pm \infty\} ,
$$
write $S_+:=(x_1,\ldots ,x_n)$ and define
$$a^i(M):=\fin (H^i_{S_+}(M))\in \ZZ\cup\{ -\infty\}, \ \ b_j (M):=\fin (\Tor_j^S (M,R))\in \ZZ\cup\{ -\infty\}.$$
Recall that, as $M$ is finitely generated, it is generated in degrees at most $b_0(M)$, and this estimate is optimal. We are now ready to introduce regularity (see \cite[Proposition 2.4]{CJR} and \cite[\S 2]{Cha12}). 

\begin{thm}
Let $M\not= 0$ be a finitely generated graded $S$-module. Then,
\begin{multline*}
\reg (M):=\max_i \{ a^i(M)+i\}=\max_j \{ b_j(M)-j\}=\min_{F^M_\bullet}\{\sup_j \{ b_0 (F^M_j)-j\}\} \\ =\min_{F^M_\bullet}\{\max_{j\leq n}\{ b_0 (F^M_j)-j\}\}\in \ZZ ,	
\end{multline*}

where $F_\bullet^M$ runs over graded free $S$-resolutions of $M$. 
\end{thm}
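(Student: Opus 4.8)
The plan is to prove that all four expressions coincide by closing a cycle of inequalities; write $\rho:=\max_i\{a^i(M)+i\}$ for the first one. First I would record the vanishing ranges: since $x_1,\ldots,x_n$ is a regular sequence, $\Tor_j^S(M,A)=H_j(K_\bullet(x_1,\ldots,x_n;M))=0$ for $j>n$, so $b_j(M)=-\infty$ there, while Grothendieck vanishing gives $a^i(M)=-\infty$ for $i>n$; hence both maxima run over $0\le i,j\le n$. Next, for an \emph{arbitrary} graded free resolution $F_\bullet\to M$ I would extract two universal bounds. As $\Tor_j^S(M,A)=H_j(F_\bullet\otimes_S A)$ is a subquotient of $F_j\otimes_S A$, we get $b_j(M)\le b_0(F_j)$. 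Applying instead the \v{C}ech complex computing $H^\bullet_{S_+}$ to $F_\bullet$, free modules satisfy $H^q_{S_+}(F_j)=0$ for $q\neq n$ and $\fin H^n_{S_+}(F_j)=b_0(F_j)-n$ (because $\fin H^n_{S_+}(S)=-n$), so among the two spectral sequences of the double complex $\check{C}^\bullet_{S_+}\otimes_S F_\bullet$, which share the abutment $H^\bullet_{S_+}(M)$, one degenerates and exhibits $H^i_{S_+}(M)$ as a subquotient of $H^n_{S_+}(F_{n-i})$; thus $a^i(M)+i\le b_0(F_{n-i})-(n-i)$. Setting $j=n-i$, both bounds give, for every resolution, $\rho\le\max_{j\le n}\{b_0(F_j)-j\}$ and $\max_j\{b_j(M)-j\}\le\max_{j\le n}\{b_0(F_j)-j\}$; hence the first and second expressions are $\le$ the fourth, which is $\le$ the third.

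For the reverse bounds I would set up an induction driven by two lemmas. The first is the classical statement that an $m$-regular module is generated in degrees $\le m$, i.e. $b_0(N)\le\reg(N)$ for every finitely generated graded $N$. The second compares a module with a first syzygy: choosing $F$ free generated in degrees $\le b_0(M)$ and writing $0\to M'\to F\to M\to 0$, the long exact sequence in $H^\bullet_{S_+}$ gives $H^i_{S_+}(M')\cong H^{i-1}_{S_+}(M)$ for $1\le i<n$ together with $H^0_{S_+}(M')=0$, and, using the first lemma to bound $\fin H^n_{S_+}(F)=b_0(M)-n\le\rho-n$ in top degree, $\reg(M')\le\reg(M)+1$. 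Iterating, I would build a resolution with $b_0(F_j)=b_0(\mathrm{syz}_{j-1})\le\reg(\mathrm{syz}_{j-1})\le\rho+j$, so $\sup_j\{b_0(F_j)-j\}\le\rho$; this gives the third expression $\le\rho$, and with the first paragraph forces the first, third and fourth expressions all equal to $\rho$. The Tor long exact sequence of the same syzygy sequence gives $b_j(M)=b_{j-1}(M')$ for $j\ge 2$ and $b_1(M)\le b_0(M')$, so the identical induction on $j$ yields $b_j(M)-j\le\reg(M)$, i.e. the second expression is also $\le\rho$.

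It then remains to prove $\rho\le\max_j\{b_j(M)-j\}$, the assertion that the extremal local cohomology is detected by an actual $\Tor_j^S(M,A)$ rather than lost to cancellation. Over a field this is immediate from the minimal free resolution, whose differentials vanish modulo $S_+$, so that $b_0(F_j^{\min})=b_j(M)$ and the already-established equality of the third expression with $\rho$ reads $\max_j\{b_j(M)-j\}=\rho$. For a general Noetherian base I would reduce to this case by base change to the residue fields $\kappa(\ip)$, $\ip\in\Spec(A)$: flat base change gives $H^i_{S_+}(M)_\mu\otimes_A\kappa(\ip)=H^i_{S_+}(M\otimes_A\kappa(\ip))_\mu$ and $\Tor$ commutes with the localization, while $\fin$ of a finitely generated graded $A$-module is the supremum of the $\fin$ of its fibers; this transfers the field-case equality to $A$ and closes the cycle.

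The main obstacle is the first lemma, the base case $b_0(N)\le\reg(N)$. I expect to prove it by induction on the number $n$ of variables, reducing modulo $x_n$: splitting the four-term sequence $0\to(0:_Nx_n)(-1)\to N(-1)\xrightarrow{x_n}N\to N/x_nN\to 0$ into two short exact sequences, comparing local cohomology over $S$ and over $S/x_nS=A[x_1,\ldots,x_{n-1}]$, and applying the inductive hypothesis to the $S/x_nS$-module $N/x_nN$. The delicate point, and the reason a general base $A$ is harder than a field, is that $x_n$ need not be a nonzerodivisor, so one cannot pass to a generic hyperplane; instead one must carry the correction term $(0:_Nx_n)$ and control its local cohomology throughout, which is precisely the bookkeeping carried out in the cited references.
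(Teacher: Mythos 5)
The paper itself does not prove this theorem; it only cites \cite[Proposition 2.4]{CJR} and \cite[\S 2]{Cha12}, so your attempt has to be judged against those arguments and the paper's own toolbox. Your first two paragraphs are essentially correct: the subquotient bound $b_j(M)\le b_0(F_j)$, the collapse of the double complex $\check{C}^\bullet_{S_+}\otimes_S F_\bullet$ (valid over any Noetherian base since $H^q_{S_+}(F_j)=0$ for $q\ne n$), and the syzygy induction resting on the lemma $b_0(N)\le \reg(N)$ do yield ``first $=$ third $=$ fourth'' and ``second $\le$ first''. (Two small remarks: the vanishing $a^i(M)=-\infty$ for $i>n$ is a consequence of the \v{C}ech complex on $n$ elements, not of Grothendieck's dimension-vanishing theorem, since $\dim M$ may exceed $n$ when $\dim A>0$; and the cited sources get your Lemma~1 together with $\max_j\{b_j(M)-j\}\le\reg(M)$ in one stroke, by running your paragraph-1 \v{C}ech--Koszul comparison on $K_\bullet(x_1,\ldots,x_n;M)$ instead of on a resolution, which also disposes of the bookkeeping you defer in your last paragraph.)

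The genuine gap is in your third paragraph. The base change $A_\ip\to\kappa(\ip)$ is \emph{not} flat, and the identity you invoke, $H^i_{S_+}(M)_\mu\otimes_A\kappa(\ip)=H^i_{S_+}(M\otimes_A\kappa(\ip))_\mu$, is false in general; the same goes for the compatibility of $\Tor$ with this base change. Concretely, take $S=\ZZ[x]$ (so $n=1$, $A=\ZZ$) and $M=\ZZ[x]/(px)$ for a prime $p$: then $H^0_{(x)}(M)=p\ZZ\neq 0$, concentrated in degree $0$, whereas $M\otimes_\ZZ\mathbb{F}_p\simeq\mathbb{F}_p[x]$ is torsion-free, so $H^0_{(x)}(M\otimes_\ZZ\mathbb{F}_p)=0$; similarly $\Tor_1^S(M,A)\simeq(0:_Mx)(-1)\neq0$ while the corresponding $\Tor_1$ of the fiber vanishes. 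This failure is not a technicality: it is precisely the difficulty that the paper's proof of Theorem~\ref{specizero} must confront with the two spectral sequences of \cite[Lemma 6.2]{Cha12}, and the lemma the paper quotes from \cite[Proposition 6.3]{Cha12} asserts commutation of local cohomology with $-\otimes_Ak$ \emph{only} for the top nonvanishing index. The correct reduction is to localizations rather than residue fields: localization is flat, so it does commute with $H^i_{S_+}$ and $\Tor_j^S(-,A)$, and since the graded pieces of these modules are finitely generated over $A$, one has $\rho(M)=\sup_\ip\rho(M_\ip)$ and $b_q(M_\ip)\le b_q(M)$. Over the *local ring $A_\ip$ a \emph{minimal} graded free resolution $F^{\min}_\bullet$ exists, minimality gives $b_0(F^{\min}_j)=\fin\Tor_j^S\bigl(M_\ip,\kappa(\ip)\bigr)$, and the change-of-rings spectral sequence $\Tor_p^{A_\ip}\bigl(\Tor_q^S(M_\ip,A_\ip),\kappa(\ip)\bigr)\Rightarrow\Tor_{p+q}^S\bigl(M_\ip,\kappa(\ip)\bigr)$ together with graded Nakayama yields $b_0(F^{\min}_j)\le\max_{q\le j}b_q(M_\ip)$. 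Feeding $F^{\min}_\bullet$ into your paragraph-1 bound then gives $\rho(M_\ip)\le\max_q\{b_q(M_\ip)-q\}\le\max_q\{b_q(M)-q\}$, and taking the supremum over $\ip$ closes your cycle of inequalities without ever tensoring cohomology with a residue field.
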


Notice that $a^i(M)=-\infty$ unless $0\leq i\leq n$ and that $b_j(M)=-\infty$ unless $0\leq j\leq n$. However, $F^M_j$ could be non zero for any $j$, even if $F^M_\bullet$ is a minimal resolution over a local ring $A$, unless $A$ is local regular (in which case $F^M_j=0$ for $j>n+\dim A$,  if $F^M_\bullet$ is minimal).

\begin{rem}\label{remDefReg}

(1) If $A$ is local (or *local : graded with a unique graded maximal ideal) and $F_\bullet^M$ is a minimal graded free $S$-resolution of $M$, then 
$$
\reg (M)=\max_j \{ b_0 (F^M_j)-j\}=\max_{j\leq n}\{ b_0 (F^M_j)-j\} .
$$

(2) {If} $d:=\cohd_{S_+}(M):=\max\{ i\ \vert\ H^i_{S_+}(M)\not= 0\}$ {denotes} the cohomological dimension of $M$ relatively to $S_+$, then
$$
\reg (M)=\max_{ n-d\leq j\leq n} \{ b_j (M)-j\}.
$$
For instance, whenever $A$ is a field and $M$ is Cohen-Macaulay of dimension $d$, 
$$\reg (M)= b_{n-d}(M)-n+d.$$
\end{rem}

Recall that for a Noetherian local ring $(A,\im )$ and a finitely generated $A$-module $M$, $\cohd_\im (M)=\dim M :=\dim (A/\ann_A (M))$ : the cohomological dimension with respect to the maximal ideal is the dimension of the support of the module.

Let us briefly present the geometrical interpretation of the cohomological dimension with respect to $S_+$. For any prime ideal {$\ip\in \Spec (A)$}, set $k_\ip :=A_\ip /\ip A_\ip={\rm Frac}(A/\ip )$. The stalk of $\Fi :=\tilde M$ at $V(\ip )$ is the sheaf defined on $\iP^{n-1}_{\Spec (A_\ip )}$ by $M\otimes_A A_\ip$. The fiber of $\Fi$ at the corresponding point is the sheaf defined on $\iP^{n-1}_{k_\ip }$ by $M\otimes_A k_\ip$, with the usual abuse of notations $\iP^{n-1}_{k_\ip }:=\iP^{n-1}_{\Spec (k_\ip )}$. The following result shows in particular that, in our situation, the cohomological dimension is the maximal dimension of the fibers of the family of sheaves given by $\Fi$ (plus one, as there is a difference of 1 between the dimension of a graded module and the one of the sheaf on the projective sheaf it represents).

\begin{lem}[{\cite[Proposition 6.3]{Cha12}}]
(1) For any $\ip \in \Spec (A)$, $$H^i_{S_+}(M)\otimes_A A_\ip \simeq H^i_{S_+}(M\otimes_A A_\ip ).$$

(2) If $(A,\im ,k)$ is local, then
$$
d:=\max_{\ip \in \Spec (A)}\{ \dim (M\otimes_A A_\ip/\ip A_\ip )\} =\max\{ i\ \vert\ H^i_{S_+} (M\otimes_A k)\not= 0\}=\max\{ i\ \vert\ H^i_{S_+} (M)\not= 0\},
$$
and 
$H^d_{S_+}(M)\otimes_A k\simeq H^d_{S_+} (M\otimes_A k).$
\end{lem}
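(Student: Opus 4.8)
The plan is to handle the two statements with complementary tools: part (1) is flat base change for local cohomology, whereas part (2) rests on the two spectral sequences of \cite[Lemma 6.2]{Cha12} specialized to the residue field, combined with the localization isomorphism of part (1).

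For part (1), I would compute $H^i_{S_+}$ by the \v{C}ech complex $\check{C}^\bullet := \check{C}^\bullet(x_1,\ldots,x_n;-)$ on the generators of $S_+$. Each term of $\check{C}^\bullet(M)$ is a finite direct sum of localizations $M_{x_{j_1}\cdots x_{j_p}}$, and localizing $M$ at a multiplicative set commutes with the flat extension $A\to A_\ip$, so $\check{C}^\bullet(M)\otimes_A A_\ip=\check{C}^\bullet(M\otimes_A A_\ip)$ as complexes of $S\otimes_A A_\ip$-modules. Flatness of $A_\ip$ over $A$ lets cohomology pass through $-\otimes_A A_\ip$, giving $H^i_{S_+}(M)\otimes_A A_\ip\simeq H^i_{S_+}(M\otimes_A A_\ip)$; this is routine.

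For part (2), with $(A,\im,k)$ local, the real content is $\cohd_{S_+}(M)=\dim(M\otimes_A k)$ together with the top isomorphism. I would first recall that for a finitely generated graded module $N$ over a polynomial ring over a field, Grothendieck vanishing and non-vanishing give $\cohd_{S_+}(N)=\dim N$; applied to $M\otimes_A k$ this identifies the target with $c:=\cohd_{S_+}(M\otimes_A k)$. Then I invoke the two spectral sequences
$$
{^{'}_2}E^p_q=H^p_{S_+}(\tor_q^A(M,k)),\qquad {^{''}_2}E^p_q=\tor_q^A(H^p_{S_+}(M),k)
$$
with common abutment $H^{p-q}$ of the total complex of $\check{C}^\bullet(M)\otimes_A P_\bullet$, $P_\bullet\to k$ a free resolution. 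The crucial input is a support bound: any $\mathfrak{q}\in\Spec S$ with $\tor_q^A(M,k)_{\mathfrak{q}}\neq 0$ contracts to $\im$ (as $\tor_q^A(M,k)$ is killed by $\im$), and then $(M\otimes_A k)_{\mathfrak{q}}=0$ forces $M_{\mathfrak{q}}=\mathfrak{q}S_{\mathfrak{q}}M_{\mathfrak{q}}$, whence $M_{\mathfrak{q}}=0$ by Nakayama over the local ring $S_{\mathfrak{q}}$; thus $\Supp_S\tor_q^A(M,k)\subseteq\Supp_S(M\otimes_A k)$ and $\cohd_{S_+}(\tor_q^A(M,k))\le c$ for every $q$. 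This makes ${^{'}_2}E^p_q=0$ for $p>c$, so $H^n(\mathrm{Tot})=0$ for $n>c$. Writing $c':=\cohd_{S_+}(M)$, the term ${^{''}_2}E^{c'}_0=H^{c'}_{S_+}(M)\otimes_A k$ is a surviving corner (no differentials enter or leave, since $p=c'$ is maximal and $q=0$ minimal) and is nonzero, because a single graded piece $H^{c'}_{S_+}(M)_\mu\neq 0$ is finitely generated over $A$ and survives $-\otimes_A k$ by Nakayama; hence $H^{c'}(\mathrm{Tot})\neq 0$ and $c'\le c$. Symmetrically, ${^{''}_2}E^p_q=0$ for $p>c'$, while the corner ${^{'}_2}E^c_0=H^c_{S_+}(M\otimes_A k)\neq 0$ survives (its outgoing differentials land in ${^{'}}E^{c+r}_{r-1}=H^{c+r}_{S_+}(\tor_{r-1}^A(M,k))=0$ by the support bound), giving $H^c(\mathrm{Tot})\neq 0$ and $c\le c'$. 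Thus $c'=c=\dim(M\otimes_A k)$, and since in total degree $c$ each spectral sequence has the single contributing term $(p,q)=(c,0)$, comparing them yields $H^c_{S_+}(M)\otimes_A k\simeq H^c_{S_+}(M\otimes_A k)$, the asserted isomorphism.

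Finally, to identify this common value with $d=\max_\ip\dim(M\otimes_A k_\ip)$, I localize: by part (1) $H^i_{S_+}(M)_\ip\simeq H^i_{S_+}(M\otimes_A A_\ip)$, and the local-ring result just proved, applied over $A_\ip$, gives $\max\{i:H^i_{S_+}(M)_\ip\neq 0\}=\dim(M\otimes_A k_\ip)$. Since a module vanishes iff all its localizations do, $\cohd_{S_+}(M)=\max_\ip\dim(M\otimes_A k_\ip)=d$, which in turn forces the special fiber to carry the maximal dimension. The main obstacle is the support bound feeding the spectral-sequence vanishing: it is precisely there that one must exploit that all relevant primes lie over the maximal ideal in order to run Nakayama over $S_{\mathfrak{q}}$, and this single step controls $\cohd_{S_+}(\tor_q^A(M,k))$ and hence the whole comparison.
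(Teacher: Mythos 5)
Your proposal is correct and takes essentially the same route as the source: the paper itself defers this lemma to \cite[Proposition 6.3]{Cha12}, and the argument there --- flat base change via the \v{C}ech complex for (1), and for (2) the two spectral sequences of \cite[Lemma 6.2]{Cha12} combined with the support containment $\Supp_S \tor_q^A(M,k)\subseteq \Supp_S(M\otimes_A k)$, Grothendieck (non-)vanishing, and Nakayama --- is exactly the machinery you deploy, which is also the same device the paper uses in its sketch of the proof of Theorem \ref{specizero}. No gaps to report.
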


Hence, by (1) and (2), $\cohd_{S_+}(M)$ is the maximal dimension over the prime ideals $\ip$ of $A$ (equivalently among the maximal ideals) of the modules $M\otimes_A A_\ip /\ip A_\ip$. In other words, it is one more than the maximal dimension of support of the fibers of the family of sheaves $\Fi$ over {$\Spec(A)$}. 

The notion of regularity extends to Cox rings, taking cohomology with respect to $B$ or more generally with respect to any graded $S$-ideal; we refer the reader to \cite{MlS04} and \cite{BotCh}.

\subsubsection{The regularity estimate for rational curves} For the case of rational curves in a projective space, the next result gives another application of the matrices introduced in Section \ref{subsec:mrepcurves}, for Castelnuovo-Mumford regularity estimation. The proof is very much related to the original argument of L'vovsky \cite{Lvo96}, which itself relies on work of Gruson, Lazarsfeld and Peskine \cite{GLP83}; our presentation has a more ring theoretic flavor.

\begin{thm}\label{reg2}
Let $\psi :\iP^{1}_k \lra \iP^{r-1}_k$ be a morphism defined by $(f_1,\ldots ,f_r)$ and $I_\Cc$ be the defining ideal of the image of $\psi$.
If $\psi$ is birational to its image, then {$\sum_{i=1}^{r-1} \mu_i =d$} and
$$
\reg (I_\Cc )\leq \max_{i\not= j}\{ \mu_i +\mu_j\}.
$$
In particular, $\reg (S/I_\Cc )\leq \deg (\Cc ) -\codim (\Cc )$ if the $f_i$'s are linearly independent (in other words, {if} the curve $\Cc$ is not sitting in any hyperplane).
\end{thm}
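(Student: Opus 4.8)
The plan is to connect the Castelnuovo–Mumford regularity of the ideal $I_\Cc$ of the image curve to the graded pieces of the symmetric algebra $\sym_R(I)$ that were analyzed in Proposition \ref{prop:curve}. The key geometric input is that, under the birationality hypothesis, $\deg(\psi)=1$, so the intersection formula $\deg(\psi)\deg(\Cc)=d$ gives $\deg(\Cc)=d$; combined with Hilbert–Burch this furnishes the syzygy degrees $\mu_1,\ldots,\mu_{r-1}$ with $\sum_i\mu_i=d$. The strategy is to show that for $\nu\geq\max_{i\neq j}\{\mu_i+\mu_j\}$ the matrix $\MM_\nu$ from Section \ref{subsec:mrepcurves} represents $\sym_R(I)_\nu$ with the cohomological vanishing $H^0_B=H^1_B=0$ granted by Proposition \ref{prop:curve}, and then to reinterpret the resulting elimination ideal (the annihilator/Fitting ideal of $\Sc_\nu$, via Lemma \ref{lem:annFitt} and Theorem \ref{specizero}) as $I_\Cc$ up to the saturation that computes regularity.

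The main steps I would carry out are as follows. First I would fix $\nu=\max_{i\neq j}\{\mu_i+\mu_j\}$ and invoke Proposition \ref{prop:curve} to guarantee that $\MM_\nu$ is a presentation of $\sym_R(I)_\nu$ as an $A$-module with the two local-cohomology modules vanishing in that degree. Since $\psi$ is birational, the corank statement of Proposition \ref{prop:curve} reads $\corank(\MM_\nu(p))=\deg(\psi^{-1}(p))\in\{0,1\}$ generically, so the zeroth Fitting ideal $\Fitt^0_A(\Sc_\nu)$ and the elimination ideal $\Af(J)$ cut out $\Cc$ set-theoretically and, by Lemma \ref{lem:annFitt}, scheme-theoretically up to a power; the degree bookkeeping forces the determinantal description of $I_\Cc$ to live in degree at most $\nu$. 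Second, and this is the crux, I would translate the vanishing $H^1_B(\sym_R(I))_\nu=0$ into a regularity bound for the coordinate ring of $\Cc$: the idea is that the projection $\pi_2$ is finite onto $\Cc$, so pushing forward the sheaf associated to $\sym_R(I)$ computes $\cO_\Cc$ together with its twists, and controlling the first local cohomology of the symmetric algebra in degree $\nu$ controls $H^1$ of the ideal sheaf $\mathcal{I}_\Cc$ in the same degree. Carrying this through with the four equivalent characterizations of $\reg$ recalled above, I would read off $\reg(I_\Cc)\leq\nu=\max_{i\neq j}\{\mu_i+\mu_j\}$.

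The final ``in particular'' clause I would deduce by optimizing the bound under the linear-independence hypothesis. If the $f_i$ are linearly independent then $\Cc$ spans $\iP^{r-1}_k$, so $\codim(\Cc)=r-2$, and the standard relation $\reg(S/I_\Cc)=\reg(I_\Cc)-1$ converts the ideal bound into one for the quotient ring. To get the clean form $\deg(\Cc)-\codim(\Cc)$ I would bound $\max_{i\neq j}\{\mu_i+\mu_j\}$ using $\sum_i\mu_i=d=\deg(\Cc)$ over the $r-1$ syzygy degrees: the largest pairwise sum of $r-1$ nonnegative integers summing to $d$ is at most $d-(r-3)$ once one accounts for the $r-3$ remaining summands, and matching this against $\deg(\Cc)-\codim(\Cc)=d-(r-2)$ after the shift by one yields the stated inequality.

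The step I expect to be the main obstacle is the second one: rigorously passing from the vanishing of $H^1_B(\sym_R(I))_\nu$ to a genuine bound on $\reg(I_\Cc)$. The subtlety is that $\sym_R(I)$ sees the graph $\Gamma$, whereas $I_\Cc$ lives on the image $\Cc=\pi_2(\Gamma)$; bridging them requires the finiteness of $\pi_2$ over $\Cc$ (which birationality supplies away from finitely many points) and a careful comparison of the local cohomology of the symmetric algebra over $B$ with the local cohomology of $\cO_\Cc$ over the irrelevant ideal of $A$. I would handle this by relating the $A$-module structure of the graded pieces $\sym_R(I)_\nu$ to $\bigoplus_\nu H^0(\Cc,\cO_\Cc(\nu))$ through the presentation $\MM_\nu$, using that the complete intersection $L_1,\ldots,L_{r-1}$ provides a Koszul-type resolution whose hypercohomology computes exactly the twisted pushforwards; the regularity of $\Cc$ then emerges from the degree in which this identification first becomes an isomorphism, namely $\nu=\max_{i\neq j}\{\mu_i+\mu_j\}$.
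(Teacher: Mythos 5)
Your overall skeleton — birationality plus Proposition \ref{prop:curve} plus Fitting ideals, and the closing arithmetic $\mu_i+\mu_j\leq d-(r-3)$ when all $\mu_k\geq 1$, which is correct — matches the paper's strategy in outline, but the step you yourself single out as the crux is genuinely missing, and the mechanism you propose for it does not work. The vanishing $H^0_B(\sym_R(I))_\nu=H^1_B(\sym_R(I))_\nu=0$ of Proposition \ref{prop:curve} is local cohomology with respect to $B=(x,y)$, measured in the $(x,y)$-degree $\nu$, whereas $\reg(I_\Cc)$ is governed by local cohomology with respect to $\inn=(T_1,\ldots,T_r)$, measured in the $T$-degree. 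These sit on opposite sides of the bigrading, so the claim that controlling $H^1_B$ of the symmetric algebra ``controls $H^1$ of the ideal sheaf $\mathcal{I}_\Cc$ in the same degree'' has no content: the only role of the $B$-vanishing is to guarantee $(\sym_R(I))_\nu=(\R_I)_\nu$ and that the presentation $\MM_\nu$ specializes correctly (Theorem \ref{specizero}); it says nothing about $T$-graded regularity over $A$. Likewise, the containments $\Af(J)^{n_\nu}\subseteq\Fitt^0_A(\Sc_\nu)\subseteq\Af(J)$ from Lemma \ref{lem:annFitt} give only set-theoretic agreement with $\Cc$, and set-theoretic information can never bound regularity.

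What the paper does, and what your argument needs, consists of two separate points. First, birationality is used locally rather than generically: at every $z=V(\ip)\in\Cc$ where $\pi:\Gamma\to\Cc$ is a local isomorphism (all but finitely many points), one has $\R_I\otimes_A A_\ip\simeq (A/I_\Cc)_\ip[x]$, so each graded strand is a cyclic $A_\ip$-module with annihilator $(I_\Cc)_\ip$; since Fitting ideals commute with base change and do not depend on the chosen presentation, the ideal $J:=\Fitt^0_A\bigl((\R_I)_{\rho-1}\bigr)$ satisfies $J_\ip=(I_\Cc)_\ip$ at all such points, i.e. $J=I_\Cc\cap\ia$ with $\dim V(\ia)=0$ --- a much stronger statement than your ``scheme-theoretically up to a power''. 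Second, the regularity bound is then proved over $A$, in the $T$-grading: the strand is taken in degree $\rho-1$ (not $\rho$, as you propose --- with $\nu=\rho$ the maximal minors have degree $\rho+1$ and the method would only yield $\reg(I_\Cc)\leq\rho+1$), so that its linear presentation matrix has maximal minors of degree exactly $\rho$; the Eagon--Northcott complex of this matrix, fed into the \v{C}ech--Koszul spectral sequences for $\inn$, gives $H^1_\inn(A/J)_\mu=0$ for $\mu>\rho-2$ and $H^2_\inn(A/J)_\mu=0$ for $\mu>\rho-3$, hence $\reg(J^{sat})\leq\rho$. Finally one transfers the bound via the exact sequence $0\to I_\Cc/J^{sat}\to S/J^{sat}\to S/I_\Cc\to 0$, using that $I_\Cc/J^{sat}$ is supported on finitely many points, to get $\reg(I_\Cc)\leq\reg(J^{sat})$. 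None of this is supplied by your ``Koszul-type resolution computing twisted pushforwards'', which again lives on the graph side (over $R$, with respect to $B$) rather than on the target side where the regularity statement is made.
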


\begin{proof}
Let $\rho :=\max_{i\not= j}\{ \mu_i +\mu_j\}$. 
From Proposition \ref{prop:curve}, it follows that $H^0_{(x,y)} (\Ss_I)_\mu =0$ for $\mu\geq \rho -1$, in particular $(\Ss_I)_\mu =(\R_I)_\mu$ for $\mu\geq \rho -1$. This provides a presentation for the $A$-module $(\R_I)_{\rho -1}$ :
$$
\xymatrix@C=25pt{
\oplus_i A[x,y]_{\rho -1-\mu_i}\ar^(.52){\begin{bmatrix} &\vdots&\\ \cdots &L_{ij}& \cdots\\ &\vdots&\\ \end{bmatrix}}[rr]&&A[x,y]_{\rho -1}\ar[r]&(\R_I)_{\rho -1}\ar[r]&0\\}
$$
where, for $j$ corresponding to $x^ay^b\in A[x,y]_{\rho -1-\mu_i}$ one has $x^ay^b L_{i}=\sum_{i=1}^{\rho }x^{i-1}y^{\rho -i}L_{ij}(T_1,\ldots ,T_r)$. 
We now use the graded presentation of the $A$-module $(\R_I)_{\rho -1}$ as above 
$$
\xymatrix{ A[-1]^N \ar^(.6){\Psi}[r]&A^\rho \ar[r]&(\R_I)_{\rho -1}\ar[r]&0\\}
$$
where $\Psi $ is the matrix of the linear forms $L_{ij}$. The ideal $J:=\Fitt^0_A ((\R_I)_{\rho -1})$ is {an ideal of $A$} generated in degree $\rho$ (by the maximal minors of $\Psi$). Let $\inn :=(T_1,\ldots ,T_r)$ be the graded {irrelevant} ideal of $A$. The conclusion will derive easily from the two following observations :
(1) $J=I_\Cc \cap \ia$ with $V(\ia )$ supported on the finitely many points of $\Cc$ where $\psi$ is not locally invertible (and thus an isomorphism), (2) $\reg (J^{sat} )\leq \rho$.

Indeed, $J^{sat}=I_\Cc \cap \ia '$ with $V(\ia )=V(\ia ')$, and the exact sequence $$0\ra I_\Cc / J^{sat} \ra S/J^{sat} \ra S/I_\Cc\ra 0$$ provides an exact sequence 
$$
\xymatrix{ H^1_\inn (S/J^{sat})\ar[r]& H^1_\inn (S/I_\Cc )\ar[r]&H^2_\inn ( I_\Cc / J^{sat})=0\\}
$$
and an isomorphism $H^2_\inn (S/I_\Cc )\simeq H^2_\inn (S/J^{sat})$, proving that $\reg (I_\Cc )\leq \reg (J^{sat} )$.

To prove (1) notice that if $z\in \Cc$ is such that $\pi :=\Gamma \ra \Cc$ is {locally an isomorphism}  at $z=V(\ip )$, then $\R_I \otimes_A A_\ip\simeq (A/I_\Cc )_\ip [x]$ - in other words, the defining ideal of $\R_I$ over $(A/I_\Cc ) [x,y]$ contains an equation $fx+gy$ with $f,g\in A/I_\Cc $ and $g\not\in \ip$. Hence, for any $\mu \geq 0$, $(\R_I \otimes_A A_\ip )_\mu \simeq A_\ip /(I_\Cc )_\ip$ admits a presentation 
$$
\xymatrix{ A_\ip ^t \ar^{\Theta}[r]&A_\ip \ar[r]&(\R_I \otimes_A A_\ip )_\mu\ar[r]&0\\},
$$
where the entries of $\Theta$ are generators of $I_\Cc\otimes_A A_\ip$. It follows that $J\otimes_A A_\ip =I_\Cc\otimes_A A_\ip$, as Fitting ideals are independent of the presentation. This proves (1).

We now prove (2). The Eagon-Northcott complex $E_\bullet$ of the matrix $\Psi$ has the form :
$$
\xymatrix{ \cdots\ar[r]&A[-\rho -2]^{a_3}\ar[r]& A[-\rho -1]^{a_2}\ar[r]&A[-\rho ]^{a_1}\ar^(.6){\bigwedge^\rho \Psi}[r]\ar[r]&A\ar[r]&0\\}.
$$
We consider the double complex $\Cc^\bullet_{\inn}(E_\bullet)$ to estimate the regularity of $H_0(E_\bullet )=A/J$, {where the notation $\Cc^\bullet_{\inn}(-)$ stands for the \v{C}ech complex with respect to the ideal $\inn$}. 
Setting $H_i:=H_i(E_\bullet )$, the two spectral sequences have respective second pages :
$$
\xymatrix{ 
\cdots &H^0_{\inn}(H_{2})&H^0_{\inn}(H_{1})\ar[ddl]&H^0_{\inn}(A/J)\ar[ddl]\\
\cdots &H^1_{\inn}(H_{2})&H^1_{\inn}(H_{1})&H^1_{\inn}(A/J)\\
\cdots &H^2_{\inn}(H_{2})&H^2_{\inn}(H_{1})&H^2_{\inn}(A/J)\\
\cdots & 0 &0 &0\\}
\quad\quad  \xymatrix{ \ \\{\rm and}\\ \ \\}\quad\quad
\xymatrix{ 0&\cdots &0\\
0&\cdots &0\\
\cdots &H_i(H^n_{\inn} (E_\bullet))&\cdots \\}
$$
As a consequence, $H^2_{\inn}(A/J)\simeq H_{n-2}(H^n_{\inn} (E_\bullet))$ and $H_{n-1}(H^n_{\inn} (E_\bullet))_\mu =0$ {implies that} $H^1_{\inn}(A/J)_\mu =0$.
But $H^n_{\inn} (E_{p})\simeq H^n_{\inn}(A(-\rho -p+1)^{a_{p}})$ for $p\geq 1$ vanishes in degrees $>\rho +p-1-n$. 
This shows that $H^1_{\inn}(A/J)_\mu =0$ for $\mu >\rho -2$ and $H^2_{\inn}(A/J)_\mu =0$ for $\mu >\rho -3$. Hence $\reg (S/J^{sat})\leq \rho -1$, as claimed.
\end{proof}

\section{Morphisms from $\iP^{n-1}_k$ to $\iP^{n}_k$}\label{sec:Pn-1}

A morphism has no base point and hence only has finite fibers. In this section, we will consider morphisms associated to hypersurface parameterizations and 
present some results that partially extends to more general situations, in particular to rational maps whose base locus is of dimension zero (see \cite[Section 4]{BCS10}).
We keep notations as in Section \ref{sec:graph}; we consider a morphism
\begin{eqnarray*}
\psi: \PP^{n-1}_k & \rightarrow & \iP^{n}_k \\ \nonumber
x=(x_1:\cdots:x_n) & \mapsto & (f_1 (x):\cdots :f_{n+1}(x)),
\end{eqnarray*}
and set $R:=k[x_1,\ldots,x_n]$, $A:=k[T_1,\ldots, T_{n+1}]$. 

\medskip

The original approach of Jouanolou in this situation was to notice first that the Rees algebra  is the saturation of the symmetric algebra, {i.e.}~$\R_I =\sym_R(I) /H^0_\im (\sym_R(I) )$ with $\im =(x_1,\ldots ,x_n)$, and second that $\sym_R(I)$ admits a resolution by the approximation complex {of cycles}, whose graded components provide free $A$-resolutions of $\sym_R(I)_{\mu ,*}$. Hence, given $\mu \geq 0$ such that $H^0_\im (\sym_R(I) )_{\mu ,*}=0$, one gets a (minimal) free $A$-resolution of the $A$-module $(\R_I )_{\mu ,*}$, whose annihilator is the ideal of the image.\\

We briefly recall what the approximation complex ({of} cycles) is. The two Koszul complexes
$K_{\bullet}(f;S)$ and $K_{\bullet}(T;S)$ where $f:=(f_{1},\ldots
,f_{n+1})$ and $T:=(T_{1},\ldots
,T_{n+1})$, have the same modules
$K_{p}=\bigwedge^{p}S^{n+1}\simeq S^{{n+1}\choose{p}}$ and 
differentials $d^{f}_{\bullet}$ and $d^{T}_{\bullet}$, {respectively}. Set $Z_{p}(f;S):=\ker (d^{f}_{p})$. 
It directly follows from the definitions that
$d^{f}_{p-1}\circ d^{T}_{p}+d^{T}_{p-1}\circ d^{f}_{p}=0$, so that 
$d^{T}_{p}(Z_{p}(f;S))\subset Z_{p-1}(f;S)$. 
The complex
$\Z_{\bullet}:=(Z_{\bullet}(f;S),d^{T}_{\bullet})$ is the 
$\Z$-complex associated to the $f_{i}$'s.
Notice that $Z_{p}(f;S)=S\otimes_{R}Z_{p}(f;R)$, $Z_{0}(f;R)=R$, $Z_{1}(f;R)={\rm Syz}_{R}(f_{1},\ldots ,f_{n+1})$, and the map $d_{1}^{T}$ is defined by 
\begin{eqnarray*}
	d_{1}^{T}:S\otimes_{R}{\rm Syz}_{R}(f_{1},\ldots
	,f_{n+1}) & \rightarrow & S \\
	(a_{1},\ldots ,a_{n+1}) & \mapsto & a_{1}T_{1}+\cdots +a_{n+1}T_{n+1}.
\end{eqnarray*}

The following result, that holds for any finitely generated ideal $I$ in a commutative ring $R$, shows the intrinsic nature of the homology of the
$\Z$-complex, it is a key point in proving results on its
acyclicity (see \cite[Section 4]{HSV}).\medskip 

\begin{thm} $H_{0}(\Z_{\bullet})\simeq \sym_R(I)$ and the
homology modules $H_{i}(\Z_{\bullet})$ are $\sym_R(I)$-modules that only
depend upon $I\subset R$, up to isomorphism.
\end{thm}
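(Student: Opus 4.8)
The theorem asserts two things about the $\Z$-complex $\Z_\bullet = (Z_\bullet(f;S), d^T_\bullet)$: first, that $H_0(\Z_\bullet) \simeq \sym_R(I)$, and second, that all homology modules $H_i(\Z_\bullet)$ are $\sym_R(I)$-modules depending only on $I$, not on the chosen generators. Let me sketch how I would prove each part.
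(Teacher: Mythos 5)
Your proposal contains no proof. After restating the two assertions of the theorem, it ends with ``Let me sketch how I would prove each part'' --- and no sketch follows. There is no argument for $H_0(\Z_\bullet)\simeq \sym_R(I)$, no argument that the $H_i(\Z_\bullet)$ carry a $\sym_R(I)$-module structure, and no argument for independence of the chosen generators. A restatement of the claim is not a proof attempt, so there is nothing here to validate.

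For what it is worth, here is what a complete argument must contain (the paper itself defers to Herzog--Simis--Vasconcelos, Section 4, rather than proving it). For the first assertion, one observes that $\Z_\bullet$ ends with $S\otimes_R Z_1(f;R)\xrightarrow{d_1^T} S$, and the image of $d_1^T$ is exactly the $S$-ideal generated by the linear forms $a_1T_1+\cdots+a_rT_r$ as $(a_1,\ldots,a_r)$ runs over the syzygies of $f_1,\ldots,f_r$; by the standard presentation of the symmetric algebra of $I$ via a free presentation $R^b\to R^r\to I\to 0$, this cokernel is precisely $\sym_R(I)$. For the second assertion, the module structure comes from the fact that $\Z_\bullet$ is a complex of $S$-modules whose differentials are $S$-linear and whose homology is annihilated by the defining ideal of $\sym_R(I)$ in $S$; independence of the generating set is the genuinely nontrivial point, proved by comparing the $\Z$-complex of $(f_1,\ldots,f_r)$ with that of $(f_1,\ldots,f_r,f_{r+1})$ for a redundant generator $f_{r+1}\in I$ (one shows the homology is unchanged, e.g.\ via a mapping-cone or long-exact-sequence argument), and then relating any two generating sets through their union. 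Until your write-up actually carries out these steps, it cannot be assessed as correct or incorrect --- it is simply absent.
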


Now, we consider graded pieces :
$$
\Z_{\bullet}^{\nu}\ :\ \cdots\lra 
A\otimes_{k}Z_{2}(f;R)_{\nu +2d}\buildrel{d_{2}^{T}}\over{\lra}
A\otimes_{k}Z_{1}(f;R)_{\nu +d}\buildrel{d_{1}^{T}}\over{\lra} 
A\otimes_{k}Z_{0}(f;R)_{\nu}\lra 0
$$ 
where $Z_{p}(f;R)_{\nu +pd}$ is the part of  $Z_{p}(f;R)$ consisting of
elements of the form $\sum a_{i_{1}\cdots i_{p}}e_{i_{1}}\wedge\cdots
\wedge e_{i_{p}}$ with the $a_{i_{1}\cdots i_{p}}$'s all of the same
degree $\nu$. Bus\'e and Jouanolou proved in \cite[Theorem 5.2 and Proposition 5.5]{BuJo03} that (we refer the reader to \cite[{Appendix A}]{GKZ94} for the notion of determinant of complexes)~:

\begin{prop}\label{mfr1} 
$\Z_{\bullet}$ is acyclic and for any $\nu \geq (n-1)(d-1)$, $H^0_\im (\sym_R(I) )_{\nu ,*}=0$. Hence, for such {an integer} $\nu$, $\Z_{\bullet}^{{\nu}}$ is a minimal free $A$-resolution of $(\R_I)_{{\nu}}$ and $\det (\Z_{\bullet}^{\nu})=H^{\delta}$, where $H$ is the equation of the image and $\delta$ the degree of the map $\psi$ onto its image.
\end{prop}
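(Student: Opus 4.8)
The plan is to establish the four assertions in turn: acyclicity of $\Z_\bullet$, the vanishing of $H^0_\im(\sym_R(I))_{\nu,*}$, the resolution property of $\Z_\bullet^\nu$, and the determinant formula.

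First, for acyclicity I would exploit that $\psi$ is a morphism: $V(I)=\emptyset$ in $\iP^{n-1}_k$ forces $\sqrt{I}=\im$, so that $\grade(I)=n$ in the Cohen--Macaulay ring $R$. Consequently the Koszul homology $H_i(f;R)$ vanishes for $i>(n+1)-\grade(I)=1$, so only $H_0(f;R)=R/I$ and $H_1(f;R)$ survive, and the latter is supported on $V(I)$, hence of finite length. These are precisely the depth inequalities required by the acyclicity criterion for approximation complexes (the sliding depth condition) of Herzog--Simis--Vasconcelos \cite{HSV}, which then yields that $\Z_\bullet$ is acyclic.

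Second, and this is where I expect the main obstacle, I would compute $H^\bullet_\im(\sym_R(I))$ through the double complex $\Cc^\bullet_\im(\Z_\bullet)$ obtained by applying the \v{C}ech complex on $\im$ to the acyclic $\Z_\bullet$. Since $\Z_\bullet$ resolves $\sym_R(I)=H_0(\Z_\bullet)$, one spectral sequence abuts to $H^\bullet_\im(\sym_R(I))$, while the other has first page $H^i_\im(Z_p(f;R))$. The crux is a sharp top-degree bound on the $H^i_\im(Z_p(f;R))$. I would derive it from the short exact sequences $0\to Z_p\to K_p\to B_{p-1}\to 0$ and $0\to B_p\to Z_p\to H_p(f;R)\to 0$, using $H_p(f;R)=0$ for $p\ge 2$, the finite length of $H_1(f;R)$, and the fact that $H^i_\im(K_p)$ is concentrated in $i=n$ and in degrees $\le pd-n$ (as $K_p$ is free with generators in degree $pd$ and $H^n_\im(R)$ lives in degrees $\le -n$). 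Propagating these bounds through the spectral sequence in the graded strand of total degree $\nu+pd$ produces the threshold $\nu\ge (n-1)(d-1)$ above which $H^0_\im(\sym_R(I))_{\nu,*}=0$; tracking the extremal contribution is the delicate bookkeeping step.

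Third, for the resolution property I would use that $\Proj_G(R/I)=\emptyset$, so Proposition \ref{prop:graphci} gives $\R_I=\sym_R(I)/H^0_\im(\sym_R(I))$; hence in degrees $\nu\ge (n-1)(d-1)$ the vanishing just proved yields $(\sym_R(I))_{\nu,*}=(\R_I)_{\nu,*}$. Extracting the graded strand of total degree $\nu$ is exact, so $\Z_\bullet^\nu$ is acyclic; each term $A\otimes_k Z_p(f;R)_{\nu+pd}$ is $A$-free, and since every differential $d_p^T$ has entries in $(T_1,\dots,T_{n+1})A$ there are no unit entries, whence $\Z_\bullet^\nu$ is a minimal free $A$-resolution of $(\R_I)_\nu$. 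Finally, for the determinant I would note that $(\R_I)_\nu$ is a torsion $A$-module supported on the image hypersurface $V(H)$, so its finite free resolution $\Z_\bullet^\nu$ has a well-defined determinant in the sense of \cite{GKZ94}. Over the factorial ring $A$ this determinant equals $\prod_{\mathfrak q}\mathfrak q^{\,\ell_{\mathfrak q}}$ over the height-one primes in the support, with $\ell_{\mathfrak q}$ the length of $(\R_I)_\nu$ at $\mathfrak q$; since the support is the single hypersurface $V(H)$ and the length at its generic point is the number of points in a general fiber of $\psi$, namely $\delta=\deg\psi$, this gives $\det(\Z_\bullet^\nu)=H^\delta$.
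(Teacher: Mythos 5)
The paper itself gives no proof of this proposition: it quotes \cite[Theorem 5.2 and Proposition 5.5]{BuJo03}, and your architecture --- sliding depth for acyclicity, the \v{C}ech--Koszul double complex $\Cc^\bullet_\im(\Z_\bullet)$ for the vanishing, strand extraction plus Proposition \ref{prop:graphci} for the resolution statement, and the divisor (MacRae invariant) of $(\R_I)_\nu$ for the determinant --- is the same as in that reference. Your deferred ``bookkeeping'' in the second step does close, and it is worth recording why: since $E_\infty^{-p,p}$ is a subquotient of $E_1^{-p,p}=H^p_\im(\Z_p)$, no spectral sequence differentials are needed, and in the degree-$\nu$ strand the $p$-th term sees $Z_p(f;R)$ in internal degree $\nu+pd$, so the correct quantity to bound is $H^p_\im(Z_p(f;R))_{\nu+pd}$. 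Now $H^0_\im(R)=0$, $H^1_\im(Z_1)=0$ (a second syzygy has depth $\geq 2$), and $Z_p=B_p$ for $p\geq 2$, which splices the tail of the Koszul complex into free resolutions of the $Z_p$; chasing the resulting exact sequences gives $H^n_\im(Z_n)\simeq H^n_\im(K_{n+1})$ and, for $2\leq p\leq n-1$, $H^p_\im(Z_p)\simeq\ker\bigl(H^n_\im(K_{n+1})\to H^n_\im(K_n)\bigr)\simeq\Hom_k\bigl((R/I)_{(n+1)d-n-\ast}\,,k\bigr)$ by duality. Evaluated in internal degree $\nu+pd$ the worst term is $p=2$, which vanishes exactly when $(n-1)d-n-\nu<0$, i.e. $\nu\geq(n-1)(d-1)$; only $(R/I)_{<0}=0$ is used, and the threshold drops out exactly.

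The one genuine gap is in the determinant step. Writing $\kappa$ for the residue field of the generic point of $V(H)$, your argument yields $\det(\Z_\bullet^\nu)=H^{m_\nu}$ with $m_\nu=\dim_\kappa\bigl((\R_I)_\nu\otimes_A\kappa\bigr)$, and this number is the Hilbert function of the generic fiber at the degree $\nu$, not its degree: $\R_I\otimes_A\kappa$ is a one-dimensional standard graded domain over $\kappa$, so $m_\nu$ is non-decreasing in $\nu$ and equals $\delta$ for $\nu\gg 0$, but a zero-dimensional scheme of degree $\delta$ can perfectly well have $m_\nu<\delta$ in low degrees. The vanishing of $H^0_\im(\sym_R(I))_{\nu,*}$ proved in your second step does not by itself force stabilization at $\nu=(n-1)(d-1)$; one needs in addition $H^1_\im(\sym_R(I))_{\nu,*}=0$ in the same range, which is exactly what Theorem \ref{specizero} consumes (see Proposition \ref{prop:MRepmorph} and its proof) and is proved by the same spectral sequence applied to the terms $E_1^{-p,p+1}$, or else an independent degree count: the $T$-degree of $\det(\Z_\bullet^\nu)$ is computable from the ranks and shifts of the strand alone and equals $d^{n-1}$, while $\delta\deg(H)=d^{n-1}$ by the projection formula, which pins the exponent to $\delta$. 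As written, your proof establishes $\det(\Z_\bullet^\nu)=H^{m}$ with $m\leq\delta$, not $m=\delta$.
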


In addition, it can be seen that for these degrees $\nu$, the fibers of $\psi$ can be obtained by means of elimination matrices. More precisely, let $\MM_\nu$ be a matrix of the first map of the complex $\Z_{\bullet}^{\nu}$, i.e.~the map of free $A$-modules
$$\Z_{1}^{\nu}=A\otimes_{k}Z_{1}(f;R)_{\nu +d} \xrightarrow{d_{1}^{T}} \Z_{0}^{\nu}=A\otimes_{k}R_{\nu}.$$

\begin{prop}\label{prop:MRepmorph} Let $\nu\geq (n-1)(d-1)$ and suppose given $p\in \PP^{n}_k$, then
	$$\corank(\MM_\nu(p))=\deg(\psi^{-1}(p)).$$
\end{prop}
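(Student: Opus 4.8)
The goal is to reduce Proposition \ref{prop:MRepmorph} to the already-established Theorem \ref{specizero}. The plan is to observe that the matrix $\MM_\nu$ is the first map of the graded strand $\Z_\bullet^\nu$ of the approximation complex, and that this map is precisely a presentation matrix of the $A$-module $(\sym_R(I))_\nu = A\otimes_k Z_0(f;R)_\nu / \ima(d_1^T)$. Indeed, since $H_0(\Z_\bullet)\simeq \sym_R(I)$, the cokernel of $d_1^T : \Z_1^\nu \to \Z_0^\nu$ is exactly $(\sym_R(I))_\nu$. So $\coker(\MM_\nu) = (\sym_R(I))_\nu = \Sc_\nu$ in the notation of Section \ref{sec:elimmat}, where here $\Sc = \sym_R(I)$ and $J$ is the ideal of equations of the symmetric algebra.

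Once $\MM_\nu$ is identified as a presentation matrix of $\Sc_\nu$, I would invoke Theorem \ref{specizero} directly: for any $p\in\iP^n_k$ whose fiber $\pi_2^{-1}(p)$ is finite, $\corank(\MM_\nu(p)) = \deg(\pi_2^{-1}(p))$, provided the vanishing $H^0_\im(\Sc)_\nu = H^1_\im(\Sc)_\nu = 0$ holds. The hypothesis that $\psi$ is a morphism guarantees all fibers are finite (the base locus is empty), so the fiber condition in Theorem \ref{specizero} is automatic and $\deg(\pi_2^{-1}(p)) = \deg(\psi^{-1}(p))$. Thus the only thing left is to secure both local cohomology vanishings in degree $\nu$ for $\nu \geq (n-1)(d-1)$.

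The vanishing of $H^0_\im(\Sc)_\nu$ for $\nu\geq (n-1)(d-1)$ is exactly what Proposition \ref{mfr1} provides. The main obstacle, and the step requiring genuine work, is the vanishing of $H^1_\im(\Sc)_\nu$ in the same range, which is not stated explicitly in Proposition \ref{mfr1}. The key point is that, by Proposition \ref{mfr1}, the graded complex $\Z_\bullet^\nu$ is a \emph{free $A$-resolution} of $(\R_I)_\nu = (\Sc)_\nu$ for $\nu\geq(n-1)(d-1)$, so in particular all higher homology of $\Z_\bullet^\nu$ vanishes in this range. Since the local cohomology $H^i_\im(\Sc)$ can be computed from the acyclicity of the approximation complex, the regularity bound $(n-1)(d-1)$ controlling $H^0_\im$ simultaneously controls $H^1_\im$; concretely, the same spectral sequence argument comparing the Čech-Koszul double complex that yields the acyclicity of $\Z_\bullet$ and the $H^0$-vanishing also forces $H^1_\im(\sym_R(I))_{\nu,*}=0$ for $\nu\geq(n-1)(d-1)$.

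I would therefore structure the proof as: (1) identify $\coker(\MM_\nu)=(\sym_R(I))_\nu$ using $H_0(\Z_\bullet)\simeq\sym_R(I)$; (2) record that $\psi$ being a morphism makes every fiber finite; (3) cite Proposition \ref{mfr1} for the $H^0_\im$ vanishing and extend it to $H^1_\im$ via the acyclicity of $\Z_\bullet^\nu$ in the stated range; (4) apply Theorem \ref{specizero} to conclude $\corank(\MM_\nu(p)) = \deg(\pi_2^{-1}(p)) = \deg(\psi^{-1}(p))$. The delicate part is step (3), the $H^1_\im$ vanishing, since the cited proposition only states the $H^0$ version; everything else is bookkeeping to fit the hypotheses of Theorem \ref{specizero}.
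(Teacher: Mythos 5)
Your proposal is correct and takes essentially the same route as the paper's proof: identify $\MM_\nu$ as a presentation matrix of the $A$-module $\sym_R(I)_{\nu,*}$, reduce to Theorem \ref{specizero} (the morphism hypothesis making all fibers finite), cite Proposition \ref{mfr1} for the vanishing of $H^0_\im(\sym_R(I))_{\nu,*}$, and obtain the vanishing of $H^1_\im(\sym_R(I))_{\nu,*}$ from the same \v{C}ech--Koszul spectral sequence machinery. The paper handles this last, delicate step exactly as you flag it, by deferring to the proof of \cite[Proposition 5]{BBC14}.
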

\begin{proof} As the matrix $\MM_\nu$ is a presentation matrix of the $A$-module $\sym_R(I)_{\nu ,*}$, according to Theorem \ref{specizero} one has to prove that $H^i_\im (\sym_R(I) )_{\nu ,*}=0$ for $i=0,1$ and for all $\nu\geq (n-1)(d-1)$. This follows from Proposition \ref{mfr1} for $i=0$ and the case $i=1$ can be proved in the same vain; we refer the reader to the proof of \cite[Proposition 5]{BBC14}. 
\end{proof}

A useful structural result to go further, in particular to get more compact elimination matrices, is the following. Recall that $\IP$ denotes the defining ideal of the Rees algebra $\R_I$ in $S=R[T_1,\ldots ,T_{n+1}]$. We define $\IP \lr{\ell}$ as the ideal generated by equations of the Rees algebra of $T$-degree at most $\ell$. In particular $\sym_R(I) =S/\IP\lr{1}$ and $\R_I =S/\IP\lr{\ell}$ for $\ell\gg 0$.

\begin{prop}[{\cite[Corollary 1]{BCS10}}]\label{mfr2} For every  $\nu \geq \nu_0 (I):=\reg (I)-d$,
			the  $A$-module $(\R_I)_{\nu ,*}$ admits a minimal graded free $A$-resolution of the form
			\begin{equation}\label{eq:resolutionSI*}
				\cdots \ra \Z_i^\nu \ra \cdots \ra \Z_2^\nu \ra \Z_1^\nu
				\oplus_{\ell=2}^{n}({\IP}\lr{\ell}/{\IP}\lr{\ell -1})_{\nu} \ra
				\Z_0^\nu
			\end{equation}
			with $\Z_i^\nu =A\otimes_{k}Z_{i}(f;R)_{\nu +id}$.
		\end{prop}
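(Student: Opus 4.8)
The plan is to resolve $(\R_I)_{\nu,*}$ by correcting the approximation-complex resolution of the symmetric algebra with the nonlinear equations of the Rees algebra, the two being glued by a mapping cone. Since $\psi$ is a morphism, $V(I)=\emptyset$ in $\PP^{n-1}_k$, so $\Proj_G(R/I)=\emptyset$ and Proposition \ref{prop:graphci} yields $\R_I=\sym_R(I)/H^0_\im(\sym_R(I))$ with $\im=(x_1,\ldots,x_n)$. The kernel of the canonical surjection $\sym_R(I)=S/\IP\lr{1}\twoheadrightarrow S/\IP=\R_I$ is precisely $\IP/\IP\lr{1}$, whence one identifies the torsion as the bigraded module
$$\Dd:=H^0_\im(\sym_R(I))=\IP/\IP\lr{1}=\bigoplus_{\ell\geq 2}\IP\lr{\ell}/\IP\lr{\ell-1},$$
the successive quotients recording exactly the minimal generators of $\IP$ of each $T$-degree. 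On the other side, Proposition \ref{mfr1} gives that $\Z_\bullet$ is acyclic with $H_0(\Z_\bullet)=\sym_R(I)$; taking the $x$-degree-$\nu$ strand (an exact operation) shows that, for \emph{every} $\nu$, $\Z_\bullet^\nu$ is a graded free $A$-resolution of $(\sym_R(I))_{\nu,*}$.

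Granting provisionally that, for $\nu\geq \reg(I)-d$, the graded piece $\Dd_{\nu,*}$ is a \emph{free} $A$-module concentrated in $T$-degrees $2\leq \ell\leq n$, so that $\Dd_{\nu,*}=\bigoplus_{\ell=2}^{n}(\IP\lr{\ell}/\IP\lr{\ell-1})_\nu$, I would build the resolution by a mapping cone. Applying the cone construction to the short exact sequence $0\to \Dd_{\nu,*}\to (\sym_R(I))_{\nu,*}\to (\R_I)_{\nu,*}\to 0$, I take the trivial resolution $\Dd_{\nu,*}[0]$ of the (free) left-hand term, the resolution $\Z_\bullet^\nu$ of the middle term, and a chain map lifting the inclusion, namely a lift $g\colon \Dd_{\nu,*}\to \Z_0^\nu=A\otimes_k R_\nu$ of the torsion generators. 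The cone then reads
$$\cdots\to \Z_2^\nu\xrightarrow{\,d_2^T\,}\Z_1^\nu\oplus \Dd_{\nu,*}\xrightarrow{\,[d_1^T\mid g]\,}\Z_0^\nu\to 0,$$
with $H_0=(\R_I)_{\nu,*}$ and vanishing higher homology, which is exactly the asserted complex.

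Minimality is then formal. Every entry of $d_i^T$ lies in $\inn=(T_1,\ldots,T_{n+1})$ because $d^T$ is multiplication by the $T_j$'s, while the lift $g$ sends generators of $\Dd_{\nu,*}$ of $T$-degree $t\geq 2$ to elements of $\Z_0^\nu$ of $T$-degree $t$, hence has entries in $\inn^2\subseteq\inn$; so all differentials have entries in the graded maximal ideal of $A$ and the free resolution is minimal. The $T$-degree bound $\ell\leq n$ reflects that $Z_{n+1}(f;R)=0$ (the top Koszul cycle vanishes over the domain $R$), so $\Z_\bullet$ has length $n$ and the relation type of $I$ is at most $n$.

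The crux, and the step I expect to be the main obstacle, is the deferred claim that $\Dd_{\nu,*}$ is $A$-free for $\nu\geq \reg(I)-d$. This is precisely where Castelnuovo--Mumford regularity enters and explains why the threshold improves on the coarse bound $(n-1)(d-1)$ of Proposition \ref{mfr1}: instead of forcing the torsion to vanish, one shows it stabilizes to a free module. Concretely, I would compute $H^\bullet_\im(\sym_R(I))$ through the two spectral sequences of the double complex $\Cc^\bullet_\im(\Z_\bullet)$, expressing it via the $H^\bullet_\im(Z_i(f;R))$; since the cycle modules $Z_i(f;R)$ inherit regularity bounds from $\reg(I)$ (the $f_i$ being forms of degree $d$ generating an $\im$-primary ideal), the hypothesis $\nu\geq \reg(I)-d$ forces the relevant obstructions to vanish — equivalently $\Tor^A_{\geq 1}$ of $\Dd_{\nu,*}$ dies, i.e. its higher syzygies disappear — yielding freeness and pinning the generators in $T$-degrees $2,\ldots,n$. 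Carrying out this regularity bookkeeping precisely is the technical heart; once it is in place, the mapping-cone assembly and the minimality check above are routine.
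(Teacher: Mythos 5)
Your assembly is the right one, and it matches how \cite[Corollary 1]{BCS10} is put together: Proposition \ref{prop:graphci} gives $\R_I=\sym_R(I)/H^0_\im(\sym_R(I))$ with torsion $\IP/\IP\lr{1}$, the strand $\Z_\bullet^\nu$ resolves $\sym_R(I)_{\nu,*}$ for every $\nu$, and the mapping cone plus the $T$-degree count for minimality are correct and indeed routine. But everything rests on the deferred claim, which \emph{is} the proposition, and the route you sketch for it would not succeed. The \v{C}ech--Koszul spectral sequences of $\Cc^\bullet_\im(\Z_\bullet)$ compare $H^i_\im(\sym_R(I))$ with the free $A$-modules $A\otimes_k H^j_\im(Z_q(f;R))$; what this machinery produces is degree bounds for the \emph{vanishing} of strands of $H^i_\im(\sym_R(I))$ --- exactly how the bound $(n-1)(d-1)$ of Proposition \ref{mfr1} is obtained. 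It cannot produce the $A$-module structure of $H^0_\im(\sym_R(I))_{\nu,*}$: the $E_\infty$-filtration presents that strand by subquotients of complexes of free $A$-modules, which are not free in general, and your asserted equivalence between the vanishing of local cohomology obstructions and the vanishing of $\Tor^A_{\geq 1}(\Dd_{\nu,*},k)$ has no justification --- these live over different rings. Note also that in the range $\nu_0(I)\leq \nu <(n-1)(d-1)$ the torsion strand is typically \emph{nonzero} (this is the whole point of the proposition, cf.\ Lemma \ref{mu0gen}), so no vanishing-type argument can apply; what is genuinely used in \cite{BCS10}, and is described in the paper right after the statement, is the upgrading/downgrading isomorphism \eqref{eq:JlJl-1=H1}: since $\reg(I^{t})\leq \reg(I)+(t-1)d$ by \cite[Theorem 1.7.1]{Cha07}, any non-Koszul syzygy $(h_1,\ldots,h_{n+1})$ with $\deg(h_i)=\nu+(\ell-1)d\geq \fin(R/I^{\ell-1})+1$ can be rewritten as $h_i=\sum_{\vert\alpha\vert=\ell-1}c_{i,\alpha}f^\alpha$, and this rewriting is what yields $(\IP\lr{\ell}/\IP\lr{\ell-1})_\nu\simeq (H_1)_{\nu+\ell d}\otimes_k A[-\ell]$, hence freeness, the placement of generators in $T$-degrees $2,\ldots,n$, and the threshold $\reg(I)-d$. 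This ingredient is absent from your argument.

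Two further corrections. First, $\IP/\IP\lr{1}$ is \emph{not} equal to $\bigoplus_{\ell\geq 2}\IP\lr{\ell}/\IP\lr{\ell-1}$; the $\IP\lr{\ell}$ only give a filtration, and the splitting of its degree-$\nu$ strand is a \emph{consequence} of the freeness of the quotients $(\IP\lr{\ell}/\IP\lr{\ell-1})_\nu$ (extensions of free modules split), not something to be assumed beforehand. Second, your justification of the bound $\ell\leq n$ is wrong: $Z_{n+1}(f;R)=0$ bounds the length of $\Z_\bullet$, not the relation type, and the relation type of $I$ is in general \emph{not} at most $n$ --- already for $n=2$ the implicit equation of the image curve is a minimal generator of $\IP$ of $T$-degree $d/\deg(\psi)$, usually much larger than $2$; such generators simply sit in $x$-degrees below $\nu_0(I)$. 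The correct, strand-wise statement is that $(\IP\lr{\ell})_\nu=(\IP\lr{\ell-1})_\nu$ for $\ell>n$ and $\nu\geq\nu_0(I)$, which again follows from \eqref{eq:JlJl-1=H1} together with $(H_1)_{\nu+\ell d}=0$ for such $\ell$ and $\nu$: as $I$ is an $\im$-primary almost complete intersection, $H_1$ is, up to a degree shift, the Matlis dual of $R/I$, so $\fin(H_1)=(n+1)d-n<\nu+(n+1)d$ whenever $\nu\geq 0$.
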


Notice that $\Z_1^\nu =({\IP}\lr{1}/{\IP}\lr{0})_{\nu}$. In comparison with the {admissible} degrees in Proposition \ref{mfr1}, the gain is potentially quite big in terms of the range of degrees it concerns due to the following : 

\begin{lem}[{\cite[Corollary 3]{BCS10}}]\label{mu0gen} The threshold degree $\nu_0(I)$ defined in Proposition \ref{mfr2} satisfies the inequalities
	$$\left\lfloor {{(n-1)(d-1)}\over{2}}\right\rfloor \leq \nu_0(I) \leq (n-1)(d-1).$$
{Moreover, the} equality on the left holds if the forms $f_1,\ldots ,f_{n+1}$ are sufficiently general
 and $k$ has characteristic $0$.
\end{lem}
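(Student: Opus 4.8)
The plan is to convert the statement into a sharp estimate for $\reg(I)$, since by definition $\nu_0(I)=\reg(I)-d$ (Proposition \ref{mfr2}). First I would use that $\psi$ is a morphism: the $f_i$ have no common zero in $\PP^{n-1}_k$, so $\sqrt{I}=\im=(x_1,\ldots,x_n)$ and $R/I$ is Artinian. For such a module $H^i_\im(R/I)=0$ for $i>0$ while $H^0_\im(R/I)=R/I$, and the local cohomology sequence of $0\to I\to R\to R/I\to 0$ gives $H^1_\im(I)\simeq R/I$ together with $H^i_\im(I)\simeq H^i_\im(R)$ for $i\geq 2$; since $R$ is Cohen--Macaulay with $\fin(H^n_\im(R))=-n$, this yields the clean identity $\reg(I)=\fin(R/I)+1$. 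Everything then reduces to controlling the top nonzero degree $\fin(R/I)$.

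The key device is a complete intersection sitting inside $I$. Choosing $n$ generic $k$-linear combinations $g_1,\ldots,g_n$ of the $f_i$, they form a regular sequence because $\codim I=n$ and $R$ is Cohen--Macaulay; set $J:=(g_1,\ldots,g_n)$ and $C:=R/J$. Then $C$ is an Artinian Gorenstein algebra whose Hilbert function $c_j:=\dim_k C_j$ is the $j$-th coefficient of $(1+t+\cdots+t^{d-1})^{n}$, hence is symmetric ($c_j=c_{s-j}$), is strictly increasing up to its midpoint (assuming $d\geq 2$; the case $d=1$ is trivial), and has top nonzero degree $s:=\fin(C)=n(d-1)$, depending only on $n$ and $d$. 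Since $I_d=\langle f_1,\ldots,f_{n+1}\rangle_k$ has dimension at most $n+1$ and $J_d$ has dimension $n$, a degree count gives $\dim_k(I/J)_d\leq 1$; because $I$ is generated in degree $d$ it follows that $I=J+(f)$ for a single form $f$ of degree $d$, so that $R/I=C/\bar fC$ with $\bar f\in C_d$ the image of $f$.

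Next I would exploit Gorenstein duality on $C$. The perfect pairing $C_a\times C_{s-a}\to C_s\simeq k$ identifies $\Hom_k(R/I,k)$ with $(0:_C\bar f)$ up to the shift by $s$, so $\fin(R/I)=s-\indeg(0:_C\bar f)$; equivalently $(R/I)_{s-j}\neq 0$ if and only if multiplication $\bar f:C_j\to C_{j+d}$ fails to be injective. The upper bound is then immediate: $R/I$ is a quotient of $C$, whence $\fin(R/I)\leq s=n(d-1)$ and $\reg(I)\leq n(d-1)+1=(n-1)(d-1)+d$, i.e.\ $\nu_0(I)\leq (n-1)(d-1)$. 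For the lower bound, set $m:=(n-1)(d-1)$ and $j^\ast:=\lceil m/2\rceil$. Using $s-d=m-1$ and the symmetry/strict unimodality of the $c_j$ one checks that $c_{j^\ast}=c_{\lceil m/2\rceil}>c_{\lfloor m/2\rfloor-1}=c_{j^\ast+d}$, so the map $\bar f:C_{j^\ast}\to C_{j^\ast+d}$ sends a larger space to a smaller one and therefore has nonzero kernel, for every $\bar f$. Hence $\indeg(0:_C\bar f)\leq j^\ast$ and $\fin(R/I)\geq s-j^\ast$, which rearranges to $\nu_0(I)=\reg(I)-d\geq m-\lceil m/2\rceil=\lfloor (n-1)(d-1)/2\rfloor$.

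It remains to get equality on the left for generic forms in characteristic $0$, and this is where the only substantial input enters. Equality means $\indeg(0:_C\bar f)=j^\ast$, i.e.\ that $\bar f:C_j\to C_{j+d}$ is injective in every degree $j<j^\ast$ (exactly the degrees with $c_j\leq c_{j+d}$); in other words, multiplication by a general degree-$d$ form on the complete intersection $C$ has maximal rank. I would obtain this from the Strong Lefschetz Property of the monomial complete intersection $k[x_1,\ldots,x_n]/(x_1^d,\ldots,x_n^d)$, valid in characteristic $0$ via the Hard Lefschetz theorem (Stanley, Watanabe): taking $(g_1,\ldots,g_n)=(x_1^d,\ldots,x_n^d)$ and $f=(x_1+\cdots+x_n)^d$ realizes a morphism for which every map $\bar f:C_j\to C_{j+d}$ has maximal rank, forcing $\indeg(0:_C\bar f)=j^\ast$ and $\nu_0(I)=\lfloor (n-1)(d-1)/2\rfloor$ at this point; since maximal rank is an open condition on the coefficients of the $f_i$ and $\reg$ is upper semicontinuous, the same value is attained for generic $f_1,\ldots,f_{n+1}$. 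The main obstacle is precisely this last step: the two inequalities follow from the shape of the complete-intersection Hilbert function alone, but pinning down the extremal generic value requires a Lefschetz-type maximal rank statement, which genuinely needs characteristic $0$.
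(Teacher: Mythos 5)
Your proof is correct, but note that the paper itself offers no argument to compare against: Lemma \ref{mu0gen} is imported verbatim from \cite{BCS10}, so what you have written is a self-contained reconstruction of the cited result. Your route --- using that $\psi$ is a morphism to get $\reg(I)=\fin(R/I)+1$, splitting $I=J+(f)$ with $J$ a complete intersection of $n$ generic combinations of the $f_i$, and converting everything by Gorenstein duality into $\fin(R/I)=s-\indeg(0:_C\bar f)$ with $s=n(d-1)$ --- is sound, and the index bookkeeping checks out: $c_{j^\ast+d}=c_{s-j^\ast-d}=c_{\lfloor m/2\rfloor-1}$, and $\lceil m/2\rceil\leq\lfloor s/2\rfloor$, so strict unimodality of the coefficients of $(1+t+\cdots+t^{d-1})^n$ yields both inequalities. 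You also correctly isolate where characteristic $0$ genuinely enters: the generic equality is precisely a maximal-rank (strong Lefschetz) statement, and the Stanley--Watanabe theorem for $k[x_1,\ldots,x_n]/(x_1^d,\ldots,x_n^d)$, combined with openness of the condition $(R/I)_{s-j^\ast+1}=0$ in the coefficients of the $f_i$, closes the argument. This is the same essential ingredient as in the cited source, where the generic value is obtained from the known case of Fr\"oberg's conjecture for $n+1$ generic forms in $n$ variables, itself a consequence of the strong Lefschetz property; so your reconstruction is faithful to the underlying mathematics even though it was produced blind. Two points should be made explicit in a write-up: first, the claim that the coefficients of $(1+t+\cdots+t^{d-1})^n$ are \emph{strictly} increasing up to $\lfloor s/2\rfloor$ requires $n\geq 2$ and $d\geq 2$ (it fails for $n=1$) and deserves its short induction on $n$ rather than ``one checks''; second, the standing hypothesis of Section \ref{sec:Pn-1} that $\psi$ is a morphism is indispensable --- it is what makes $R/I$ Artinian, and what guarantees that $n$ generic combinations of the $f_i$ cut out the empty set in $\iP^{n-1}_k$ (the image of $\psi$ is a proper closed subvariety of $\iP^n_k$, so a generic codimension-$n$ linear subspace misses it), hence form a regular sequence.
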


Above the threshold degree $\nu_0 (I):=\reg (I)-d=\reg (R/I)+1-d$, the following nice property holds :
		\begin{equation}\label{eq:JlJl-1=H1}
			({\IP}\lr{\ell}/{\IP}\lr{\ell -1})_{\nu }\simeq (H_1)_{\nu +\ell d}\otimes_k A[-\ell\ ],\quad \forall \nu \geq \nu_0 (I),\quad \forall \ell >1,
		\end{equation}
where the brackets stands for degree shifting in the $T_i$'s and $H_1$ denotes the homology module of the Koszul complex $K_\bullet(f;R)$. It turns out that the isomorphism \eqref{eq:JlJl-1=H1} is very explicit : given a non-Koszul syzygy $s:=(h_1,\ldots ,h_{n+1})$ with $\deg (h_i)=\nu +(\ell -1)d$, which corresponds to an element in 
$(H_1)_{\nu +\ell d}$, or equivalently to the class of $h_1T_1+\cdots +h_{n+1}T_{n+1}$ in $\IP\lr{1}/{\rm im}(d_2^T)$, one can write $h_i=\sum_{\vert \alpha\vert = \ell -1} c_{i,\alpha }f^{\alpha}$ 
as 
$$
\deg (h_i) \geq \nu_0(I) +(\ell -1)d =\reg (R/I)+1+(\ell -2)d\geq \reg (R/I^{\ell -1})+1= \mathrm{end}(R/I^{l-1}) +1
$$
(see \cite[Theorem 1.7.1]{Cha07} for the last above inequality).
The map sends $s$ to the element $\sum_i\sum_\alpha c_{i,\alpha}T_iT^\alpha$. In the other direction, one writes (the class) of an element of $({\IP}\lr{\ell}/{\IP}\lr{\ell -1})_{\nu }$ in the form $\sum_i\sum_\alpha b_{i,\alpha}T_iT^\alpha$ and maps it to $(\sum_\alpha b_{1,\alpha}f^\alpha ,\ldots ,\sum_\alpha b_{n+1,\alpha}f^\alpha )$. After checking that both maps are well-defined, it is clear that one is the inverse of the other. They are called upgrading and downgrading maps (which refers to the degree in the $T_i$'s of the elements).

One can interpret \eqref{eq:JlJl-1=H1} as a description of the graded strands of $\R_I$ in degrees at least the threshold degree $\nu_0 (I)$ purely in terms of syzygies : the non-linear equations are all obtained by upgrading some non-Koszul syzygies, and this is a one-to-one correspondence. For the cases of small dimension, it was already observed previously that incorporating quadratic relations allowed to work in degrees smaller than $(n-1)(d-1)$, hence provides matrix representations of smaller size (but with quadratic, or linear and quadratic entries); see e.g.~\cite{CGZ00,Cox01,BCD03}. In the general case, the following result holds.

\begin{thm}\label{mrdz} For every  $\nu \geq \nu_0 (I)$, a matrix $\MM_\nu$ of the $A$-linear map (extracted from \eqref{eq:resolutionSI*})
			\begin{equation}
				\quad \oplus_{\ell=1}^{n}({\IP}\lr{\ell}/{\IP}\lr{\ell -1})_{\nu} \lra
				R_\nu [T]=A^{{\nu +n-1}\choose{n-1}}
			\end{equation}
			is a \emph{matrix representation} of the fibers of $\psi$ : for any $p\in \PP^{n}_k$, $\corank(\MM_\nu(p))=\deg(\psi^{-1}(p))$. Furthermore, for $\ell >1$,  $k$-bases of $({\IP}\lr{\ell}/{\IP}\lr{\ell -1})_{\nu ,\ell}$  and $(H_1)_{\nu +\ell d}$ are in one-to-one correspondence via the upgrading and downgrading maps described above.
		\end{thm}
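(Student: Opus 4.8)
The plan is to identify $\MM_\nu$ as a presentation matrix of the $A$-module $(\R_I)_{\nu,*}$ and then apply Theorem \ref{specizero} to $\Sc := \R_I$, noting that here $B=\im$ and $S=R[T_1,\ldots,T_{n+1}]$, $\IP$ being the Rees ideal. By Proposition \ref{mfr2}, for every $\nu \geq \nu_0(I)$ the complex \eqref{eq:resolutionSI*} is a graded free $A$-resolution of $(\R_I)_{\nu,*}$; its leftmost map is exactly the displayed map of the statement (recall $\Z_1^\nu = (\IP\lr{1}/\IP\lr{0})_\nu$ and $\Z_0^\nu = R_\nu[T]$), so $\MM_\nu$ presents $(\R_I)_{\nu,*}$. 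As $\psi$ is a morphism, every fiber $\pi_2^{-1}(p)$ equals the finite scheme $\psi^{-1}(p)$, so once the hypotheses of Theorem \ref{specizero} are checked we obtain $\corank(\MM_\nu(p)) = \deg(\pi_2^{-1}(p)) = \deg(\psi^{-1}(p))$ for every $p \in \PP^n_k$, which is the first assertion.

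It remains to verify $H^0_\im(\R_I)_{\nu,*} = H^1_\im(\R_I)_{\nu,*} = 0$ for $\nu \geq \nu_0(I)$. The vanishing of $H^0_\im(\R_I)$ is automatic, since $\R_I$ is a domain and hence has no $\im$-torsion. For the degree-one part I would compare $\R_I$ with the symmetric algebra through $\R_I = \sym_R(I)/H^0_\im(\sym_R(I))$: setting $N := H^0_\im(\sym_R(I))$, the short exact sequence $0 \to N \to \sym_R(I) \to \R_I \to 0$, together with $H^i_\im(N) = 0$ for $i \geq 1$ (as $N$ is $\im$-torsion), yields an isomorphism $H^1_\im(\sym_R(I)) \simeq H^1_\im(\R_I)$. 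Thus the problem reduces to showing $H^1_\im(\sym_R(I))_{\nu,*} = 0$ for $\nu \geq \nu_0(I)$; note this must be checked even in the range $\nu_0(I) \leq \nu < (n-1)(d-1)$, where $H^0_\im(\sym_R(I))_{\nu,*}$ need not vanish.

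To prove this last vanishing I would use that the approximation complex $\Z_\bullet$ is acyclic (Proposition \ref{mfr1}) and resolves $\sym_R(I)$. Applying the $\im$-\v{C}ech functor and reading the resulting spectral sequence, whose first page is $A \otimes_k H^q_\im(Z_p(f;R))$ (using $Z_p(f;S) = A \otimes_k Z_p(f;R)$ and the $k$-flatness of $A$), the only terms contributing to $H^1_\im(\sym_R(I))_{\nu,*}$ are the $H^{p+1}_\im(Z_p(f;R))$ taken in internal degree $\nu + pd$. The crux is to show each of these vanishes as soon as $\nu \geq \nu_0(I) = \reg(I) - d$: this is precisely the calibration of the threshold $\nu_0(I)$, and it follows from regularity bounds on the Koszul cycles obtained by splicing $0 \to Z_p \to K_p \to B_{p-1} \to 0$ with $0 \to B_p \to Z_p \to H_p \to 0$, using that each Koszul homology module $H_p(f;R)$ has finite length — because $\psi$ is a morphism, so $\sqrt{I} = \im$ — with top degree governed by $\reg(R/I)$. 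This is the same mechanism underlying the coarser threshold in Proposition \ref{prop:MRepmorph} (see the proof of \cite[Proposition 5]{BBC14}), now sharpened to the optimal range afforded by the refined resolution \eqref{eq:resolutionSI*}; establishing this sharp $H^1$-vanishing is the main obstacle of the proof.

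The second assertion is then immediate from \eqref{eq:JlJl-1=H1}: restricting the isomorphism $(\IP\lr{\ell}/\IP\lr{\ell-1})_\nu \simeq (H_1)_{\nu+\ell d} \otimes_k A[-\ell]$ to its component of $T$-degree $\ell$ gives $(\IP\lr{\ell}/\IP\lr{\ell-1})_{\nu,\ell} \simeq (H_1)_{\nu+\ell d}$, a bijection realized explicitly by the upgrading and downgrading maps.
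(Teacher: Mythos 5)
Your reduction is sound and is essentially the paper's own argument: $\MM_\nu$ is a presentation matrix of the $A$-module $(\R_I)_{\nu}$ by Proposition \ref{mfr2}, every fiber of the morphism $\psi$ is finite, $H^0_\im(\R_I)=0$ because $\R_I$ is a domain, and the torsion sequence $0\to H^0_\im(\sym_R(I))\to \sym_R(I)\to \R_I\to 0$ gives $H^1_\im(\R_I)\simeq H^1_\im(\sym_R(I))$, so that everything rests on the vanishing $H^1_\im(\sym_R(I))_{\nu,*}=0$ for $\nu\geq\nu_0(I)$. The paper packages this slightly differently (it passes to the truncation $S^{(\nu)}=\sym_R(I)/H^0_\im(\sym_R(I))_{\geq\nu,*}$ and invokes \cite[6.3]{Cha12} with vanishing for all cohomological indices), but your route through Theorem \ref{specizero} applied to $\Sc=\R_I$ is equally legitimate, and your treatment of the second assertion via \eqref{eq:JlJl-1=H1} is exactly the paper's. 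The difference is that for the crucial vanishing the paper simply cites \cite[Theorem 1]{BCS10}, whereas you attempt to prove it, and that attempt has a genuine gap.

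The mechanism you propose, namely splicing $0\to Z_{p+1}\to K_{p+1}\to B_p\to 0$ with $0\to B_p\to Z_p\to H_p\to 0$ and using finiteness of the length of the Koszul homology, does control the column $p=1$ of the \v{C}ech--Koszul spectral sequence: for $n\geq 3$ one gets $H^2_\im(Z_1)\simeq H^1_\im(I)\simeq R/I$, whose piece of degree $\nu+d$ vanishes exactly when $\nu\geq \reg(R/I)+1-d=\nu_0(I)$; this column is indeed where the threshold is calibrated. But it fails for the columns $p\geq 2$: there, splicing only relates $H^{p+1}_\im(Z_p)$ to the corner terms $H^n_\im(K_n)$, whose top degree is $n(d-1)$, so it yields $H^{p+1}_\im(Z_p)_{\nu+pd}=0$ only for $\nu>n(d-1)-pd$. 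Already for $p=2$ this requires $\nu>(n-2)d-n$, which can be far above $\nu_0(I)$: for general forms $\nu_0(I)=\lfloor (n-1)(d-1)/2\rfloor$ by Lemma \ref{mu0gen}, e.g.\ for $n=6$, $d=10$ your bound asks $\nu\geq 35$ while the theorem claims $\nu\geq 22$ suffices. The missing ingredient is duality, not a regularity bound: since $I$ is an $\im$-primary almost complete intersection, $H_p(f;R)=0$ for $p\geq 2$ (so $Z_p=B_p$), and $H_1(f;R)\simeq \omega_{R/I}$ up to a degree shift; dimension shifting through the free tail of the Koszul complex together with graded local duality then identifies $H^{p+1}_\im(Z_p)_{\mu}$ for $2\leq p\leq n-2$ with a Matlis dual of a graded piece of $H_1$, that is with $(R/I)_{\mu}$ itself, and only through this identification does the vanishing hold uniformly in $p$ as soon as $\nu\geq\nu_0(I)$. (Relatedly, your claim that the $H_p$'s have top degree governed by $\reg(R/I)$ is incorrect for $H_1$: $\fin(H_1)=(n+1)d-n$ independently of $I$; it is $\indeg(H_1)$ that duality ties to $\reg(R/I)$.) As written, your sketch only reproves the coarser range of Propositions \ref{mfr1} and \ref{prop:MRepmorph}; either supply the duality argument above or, as the paper does, quote \cite[Theorem 1]{BCS10} for this step.
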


\begin{proof}
The algebra $S^{(\nu )}:=S/(\IP_{*,1}+\IP_{\geq \nu,*})=\sym_R(I)/H^0_\im (\sym_R(I))_{\geq \nu ,*}$ satisfies the equalities 
 $H^0_\im (S^{(\nu )})_{\geq \nu ,*}=0$ and 
 $H^i_\im (S^{(\nu )})=H^i_\im (\sym_R(I))$ for $i>0$.
As $H^i_\im (\sym_R(I))_{\geq \nu ,*}=0$ for $i>0$ by \cite[Theorem 1]{BCS10}, the conclusion follows from \cite[6.3]{Cha12} since $H^i_\im (S^{(\nu )})_{\geq \nu ,*}=0$ for all $i$.
\end{proof}

\begin{rem}
In the particular case of a general map from $\iP^2_k$ to $\iP^3_k$ ($n=3$), Theorem \ref{mrdz} provides a square matrix of quadrics which is a matrix representation of the fibers : the determinant is the equation of the image and ideals of minors of different sizes yield a filtration of fibers by their degrees; see \cite[\S 5]{BCS10} for more details.	
\end{rem}

\section{When the source is of dimension 2}\label{sec:dim2}

In this section we consider the case of {a rational map $\psi :\iP^{2}_k \dasharrow \iP^{3}_k$}  defined by four homogeneous polynomials $f_1,\ldots,f_4$ of the same degree $d\geq 1$. Following the discussion in Section \ref{subsec:symalg}, the geometric picture is reflected by the inclusions $\Gamma \subseteq W\subset  \iP^{2}_k \times_k \iP^{3}_k$, with $\Gamma =\Proj(\R_I)$ and $W=\Proj(\Sc_I )$. We are seeking informations on both the image and the fibers of the second canonical projection $\pi$ from $\Gamma$ to $\iP^{3}_k$.

Following Proposition \ref{prop:graphci}, whenever $\Proj (R/I)\subseteq \iP^{2}_k$ is empty or locally a complete intersection (for instance reduced, as it is of dimension zero) then $\Gamma =W$. Actually, components of $W$ can be easily described~: besides $\Gamma$, they only differ by linear subspaces of the form $\{p\}\times L_p$ where $p$ is a point where $V(I)$ is not locally a complete intersection. Two cases happen~:
\begin{itemize}
	\item $V(I)$ is locally defined at $p$ by the four equations, and not less, in which case $L_p=\iP^{3}_k$ and therefore $\pi (W)=\iP^{3}_k$,
	\item $V(I)$  is locally defined at $p$ by three equations, in which case $L_p$ is a hyperplane whose equation is given by the specialization at $p$ of any syzygy that do not specialize to 0, equivalently the specialization of a local relation expressing one $f_i$ in terms of the other three. It can be verified (see \cite[Lemma 6]{BCJ06}) that this component has multiplicity equal to the difference between the Hilbert-Samuel local multiplicity of the ideal and its colength (these being equal if and only if $I$ is locally a complete intersection).
\end{itemize}

We keep notations as above, in particular  $I=(f_1,\ldots ,f_4)$ is a graded {ideal of $R$} generated in degree $d$.  Recall that $a^i(M):=\fin (H^i_\im (M))$ and $a^* (M):=\max_i \{ a^i (M)\}$, with $\im =(x_1,x_2,x_3)$. Finally, in a local ring the notation {$\mu (N)$} denotes the minimal number of generators of the module $N$.

In view of applying Theorem \ref{specizero}, a crucial step is to estimate the vanishing degree of local cohomology with respect to {$\im$}, which is reflected in the regularity of $\Sc_I$ as a $\ZZ$-graded $A[x_1,x_2,x_3]$-module ($\deg (x_i)=1$). 

\begin{prop}[{\cite[Proposition 5]{BBC14}}]\label{prop:regSym} Assume that $\dim(R/I)\leq 1$ and {$\mu(I_\ip)\leq 3$} for every prime ideal $\im\supsetneq \ip \supset I$, then $a^*(\Sc_I)+1\leq\nu_0:=2(d-1)-\indeg(I^{sat})$, and
$$
\reg(\Sc_I)\leq \nu_0,
$$ 
unless $I$ is a complete intersection of two forms of degree $d$.
\end{prop}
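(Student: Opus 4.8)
The plan is to compute $\Sc_I=\sym_R(I)$ as the zeroth homology of the approximation complex of cycles $\Z_\bullet$ of $f_1,\dots,f_4$ from Section~\ref{sec:Pn-1}, now read off with the $\ZZ$-grading $\deg(x_i)=1$, $\deg(T_j)=0$, so that each term is $\Z_q=Z_q(f;R)\otimes_k A$ and $H^p_\im(\Z_q)=H^p_\im(Z_q(f;R))\otimes_k A$. Two structural inputs come for free from the hypotheses. First, $\dim(R/I)\le 1$ gives $\grade(I)\ge 2$, hence $H_q(f;R)=0$ for $q\ge 3$ and $Z_3(f;R)\cong R(-4d)$ is free. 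Second, $\mu(I_\ip)\le 3$ rules out the first bullet configuration above (where a fiber $L_p$ fills all of $\iP^3_k$) and keeps the Koszul homology modules $H_1(f;R)$ and $H_2(f;R)$ supported on the finite base scheme $\Proj(R/I)$, with locally controlled structure. The target is to bound $a^i(\Sc_I)=\fin(H^i_\im(\Sc_I))$ for each $i$ and then read off $\reg(\Sc_I)=\max_i\{a^i(\Sc_I)+i\}$ and $a^*(\Sc_I)$.

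First I would form the double complex $\Cc^\bullet_\im(\Z_\bullet)$ and run its two spectral sequences, exactly as in the proof of Theorem~\ref{reg2}. One abuts to $H^\bullet_\im(\Sc_I)$, corrected by $H^p_\im(H_q(\Z_\bullet))$ for $q>0$; since those positive homologies are supported on the finite base locus, their local cohomology lives only in cohomological degrees $p\le 1$ and in low internal degree, so it is harmless for the estimate. The other sequence has first page $H^p_\im(\Z_q)=H^p_\im(Z_q(f;R))\otimes_k A$, whence, after taking the differentials into account,
$$
a^i(\Sc_I)\ \le\ \max_{p-q=i}\ \fin\big(H^p_\im(Z_q(f;R))\big).
$$
I would evaluate the right-hand side through the exact sequences $0\to B_q\to Z_q\to H_q\to 0$ and $0\to Z_q\to K_q\to B_{q-1}\to 0$, using $K_q=R^{\binom 4 q}(-qd)$ and $a^3(R)=-3$, so that $\fin(H^3_\im(K_q))=qd-3$; the free Koszul pieces produce the dominant contribution $\fin(H^3_\im(K_2))=2d-3=2(d-1)-1$, which governs $a^1(\Sc_I)$ through the strand $(p,q)=(3,2)$.

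The sharp value $\nu_0=2(d-1)-\indeg(I^{sat})$ then requires two refinements. On the one hand, the naive first-page bound is too large on the diagonal: the free top term $H^3_\im(Z_3)=H^3_\im(R(-4d))$ sits in internal degree $4d-3$ and would inflate $a^0(\Sc_I)$, so one must use that the spectral-sequence differential $H^3_\im(\Z_3)\to H^3_\im(\Z_2)$ (induced by $d_3^T$) cancels it, leaving only the contributions of $H^3_\im(K_2)$ and of $H_1,H_2$. On the other hand, the improvement by $-\indeg(I^{sat})$ is precisely the base-point correction: passing to graded local duality turns the relevant top local cohomology of the cycle modules into initial degrees of their duals, which I would identify with data of the canonical module of the base scheme $\Proj(R/I)$; a form of degree $\indeg(I^{sat})$ through the base points lowers these initial degrees by exactly that amount. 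Collating the strand-by-strand estimates $a^0(\Sc_I),a^1(\Sc_I)\le\nu_0-1$ and $a^2(\Sc_I)\le\nu_0-2$, $a^3(\Sc_I)\le\nu_0-3$ yields simultaneously $a^*(\Sc_I)+1\le\nu_0$ and $\reg(\Sc_I)\le\nu_0$.

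The main obstacle is this last refinement: carrying the cancellation through the spectral-sequence differentials and, above all, extracting the exact constant $\indeg(I^{sat})$ from the duality analysis of the base scheme, while checking that no off-diagonal strand $H^p_\im(Z_q(f;R))$ (and no correction term $H^p_\im(H_q(\Z_\bullet))$) exceeds $\nu_0$. A separate, essential point is the exceptional case. When $I$ is a complete intersection of two forms of degree $d$, the four generators are not minimal, the non-Koszul syzygy structure underlying the above estimates degenerates, and $I=I^{sat}$ forces $\indeg(I^{sat})=d$, so the candidate bound becomes $\nu_0=d-2$ and is violated; I would verify that this is the only configuration compatible with $\dim(R/I)\le 1$ and $\mu(I_\ip)\le 3$ in which the base-point saving overshoots, which is exactly why it must be excluded.
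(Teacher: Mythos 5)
The paper does not actually prove this proposition: it imports it from \cite[Proposition 5]{BBC14}, so the benchmark is that proof (together with its precursor \cite{BCJ06}), whose structure is: (i) under the two hypotheses the approximation complex of cycles $\Z_\bullet$ of $f_1,\ldots,f_4$ is acyclic (given $\dim(R/I)\leq 1$, acyclicity is essentially equivalent to the condition $\mu(I_\ip)\leq 3$ off $V(\im)$), so it resolves $\Sc_I$; (ii) the two spectral sequences of the \v{C}ech--Koszul double complex $\Cc^\bullet_\im(\Z_\bullet)$ reduce everything to bounding the ends $\fin(H^p_\im(Z_q(f;R)))$; (iii) the decisive bounds are those for the non-free pieces, obtained by graded local duality and liaison. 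Your sketch has the same skeleton, but the two steps carrying the real content are flawed. The first is degree bookkeeping. In the grading in which the statement (and Section \ref{sec:Pn-1} of the paper) is written, the degree-$\nu$ piece of $\Z_q$ is $Z_q(f;R)_{\nu+qd}\otimes_k A$, so every first-page end must be shifted by $-qd$: the free terms then contribute $\fin(H^3_\im(K_2))-2d=-3$ on the strand $(p,q)=(3,2)$ and $\fin(H^3_\im(Z_3))-3d=d-3$ on the strand $(3,3)$, and since $\indeg(I^{sat})\leq d$ the latter is already $\leq \nu_0-1$; no cancellation against a differential $H^3_\im(\Z_3)\to H^3_\im(\Z_2)$ is needed, so your ``first refinement'' addresses a non-issue. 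Conversely, in your unshifted bookkeeping the free term $2d-3$ on the strand $(3,2)$ would exceed $\nu_0-1=2d-3-\indeg(I^{sat})$ as soon as there is a base point, so that normalization can never produce the $-\indeg(I^{sat})$ improvement. In the correct normalization the terms that actually govern the bound are $H^0_\im(H_1)$ and $H^1_\im(H_1)$ (entering through $H^2_\im(Z_2)$ and $H^3_\im(Z_2)$) and $H^0_\im(R/I)=I^{sat}/I$ and $H^1_\im(R/I)$ (entering through $H^2_\im(Z_1)$ and $H^3_\im(Z_1)$); one needs, for instance, $\fin(I^{sat}/I)\leq 3(d-1)-\indeg(I^{sat})$ and $\fin(H^i_\im(H_1))\leq 4d-3-\indeg(I^{sat})$ for $i=0,1$. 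These duality/liaison estimates are precisely the hard part of \cite{BBC14}, they are where the $-\indeg(I^{sat})$ correction and the exclusion of the two-form complete intersection really come from, and they are exactly what your proposal leaves as a black box.

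The second problem is the role you assign to the hypothesis $\mu(I_\ip)\leq 3$. The Koszul homologies $H_1,H_2$, and likewise the higher homologies $H_i(\Z_\bullet)$ for $i\geq 1$, are supported on $V(I)$ unconditionally, so ``keeping them supported on the base scheme'' cannot be what this hypothesis is for, and ruling out fibers $L_p=\iP^3_k$ plays no role in a cohomological estimate. What the hypothesis actually buys is the acyclicity of $\Z_\bullet$ and the local almost-complete-intersection structure of $I$ that feeds the duality estimates above; without it the proposition fails in general, acyclicity or not. Moreover, your reason for ignoring the non-acyclicity corrections --- ``supported on the finite base locus \ldots in low internal degree'' --- is not valid: a module supported on the one-dimensional cone $V(I)$ can have $H^0_\im$ and $H^1_\im$ ending in arbitrarily high internal degree ($R/I^{sat}$ itself is such a module). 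One can in fact avoid proving acyclicity, but by a different argument: since $H^j_\im(H_i(\Z_\bullet))=0$ for $j\geq 2$ and $i\geq 1$, no nonzero differential of the first spectral sequence enters or leaves the row $i=0$, so each $H^p_\im(\Sc_I)$ survives to the abutment as a subquotient and the first-page bound holds. You do not make this argument; and even with it, the hypothesis $\mu(I_\ip)\leq 3$ still has to be spent where it is genuinely needed, namely in the structural estimates for $H_1$ and $I^{sat}/I$, which your sketch never touches. (By contrast, your reading of the excluded case is fine in spirit: there $I=I^{sat}$, $\indeg(I^{sat})=d$, $\nu_0=d-2$, and the regularity bound indeed fails.)
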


Notice that if $I$ is a complete intersection of two forms, it defines a surjective map from $\iP^{2}_k$ to $\iP^{1}_k$ that has no finite fiber. We will be interested by the case where $\psi$ is generically finite. To understand and compute fibers, Fitting ideals plays a key role. 

\subsection{Fitting ideals associated to $\psi$}
From the properties of matrix representations of $\psi$, we see that for all $\nu\geq \nu_0$ the Fitting ideal $\Fitt^0_A((\Sc_I)_\nu)$ is supported on $W$ and hence provides a scheme structure on $W$. Following \cite{Tei77} and \cite[V.1.3]{EH00}, it is called the \emph{Fitting image} of $\Sc_I$ by $\pi$.

\begin{rem}
	Observe that by definition, the ideals $\Fitt_{A}^0((\Sc_I)_\nu)$ depend on the integer $\nu$ (it is generated in degree $\nu$) whereas $\ann_A((\Sc_I)_\nu)=I_Z$, with $Z:=\pi (W)$ defined by the elimination ideal, for all $\nu\geq \nu_0$.
	\end{rem}

We can push further the study of $\Fitt_{A}^0((\Sc_I)_\nu)$ by looking at the other Fitting ideals $\Fitt_{A}^i((\Sc_I)_\nu)$, $i>0$, since they provide a natural stratification~:
$$ \Fitt_{A}^0((\Sc_I)_\nu) \subset \Fitt_{A}^1((\Sc_I)_\nu) \subset \Fitt_{A}^2((\Sc_I)_\nu) \subset \cdots \subset  \Fitt_{A}^{\binom{\nu+2}{2}}((\Sc_I)_\nu)=A.$$
These Fitting ideals are actually closely related to the geometric properties of the parameterization $\psi$.  For simplicity, the Fitting ideals $\Fitt_{A}^i((\Sc_I)_\nu)$ will be denoted $\Fitt^i_\nu(\psi)$. We recall that  $\Fitt^i_\nu(\psi) \subset A$ is generated by all the minors of size $\binom{\nu+2}{2}-i$ of any $A$-presentation matrix of $(\Sc_I)_\nu$.

\begin{exmp} Consider the following parameterization of the sphere 
\begin{eqnarray*}
	\psi : \PP^2_\CC & \dto & \PP^3_\CC \\
	 (x_1:x_2:x_3) & \mapsto & (x_1^2+x_2^2+x_3^2: 2x_1x_3:2x_1x_2:x_1^2-x_2^2-x_3^2).
\end{eqnarray*}	
It has two base points. Following Theorem \ref{specizero} and Proposition \ref{prop:regSym}, matrix representations $\MM_\nu$ have the expected properties for all $\nu\geq 2-1=1$. The computation of $\MM_1$ yields 
$${\MM_1}=
\left(\begin{array}{cccc}
 0 &   T_2 &      T_3 &     -T_1+T_4 \\
	T_2 & 0   &     -T_1-T_4 & T_3 \\
	-T_3 &  -T_1-T_4 & 0       & T_2  
\end{array}\right).$$
A primary decomposition of the $3\times 3$ minors of $\MM_1$, i.e.~$\Fitt^0_1(\psi)$, gives 
$$(T_1^2-T_2^2-T_3^2-T_4^2)\cap(T_3,T_2,T_1^2+2T_1T_4+T_4^2),$$
which corresponds to the implicit equation of the sphere plus one embedded double point $(1:0:0:-1)$. Now, a primary decomposition of the $2\times 2$ minors of $\MM_1$, i.e.~$\Fitt^1_1(\psi)$, is given by 
$$(T_3,T_2,T_1+T_4)\cap (T_4,T_3^2,T_2T_3,T_1T_3,T_2^2,T_1T_2,T_1^2),$$
which corresponds to the same embedded point $(1:0:0:-1)$ plus an additional component supported at the origin. Finally, the ideal of $1$-minors of {$\MM_1$}, i.e.~$\Fitt^2_1(\psi)$, is supported at $V(T_1,\ldots,T_4)$ and hence is empty as a subscheme of $\PP^3_k$.

The point $(1:0:0:-1)$ is actually a singular point of the parameterization $\psi$ (but not of the sphere itself). Indeed, the line $L=(0:x_2:x_3)$ is a $\PP^1$ that is mapped to {the point} $(x_2^2+x_3^2: 0:0:-(x_2^2+x_3^2))$. In particular, the base points of $\psi$, namely $(0:1:i)$ and $(0:1:-i)$, are lying on this line, and the rest of the points are mapped to $(1:0:0:-1)$. Outside $L$ at the source and $P$ at the target, $\psi$ is an isomorphism.
\end{exmp}

Applying Theorem \ref{specizero} with the estimate  of Proposition \ref{prop:regSym} gives the following result that explains the phenomenon just noticed in the example :

\begin{thm} Let $\pi : W\ra \iP^{3}_k$ be the second canonical projection of $W\subset \PP^2_k\times \PP^3_k$ and $p \in \iP^{3}_k$.
	If $\dim \pi^{-1}(p) \leq 0$ then, for all $\nu\geq \nu_0 :=2(d-1)-\indeg (I^{sat})$,
	\begin{equation}\label{eqFittDeg}
	p \in V(\Fitt^i_\nu(\psi)) \Leftrightarrow \deg(\pi^{-1}(p))\geq i+1.
	\end{equation}
\end{thm}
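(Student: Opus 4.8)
The plan is to translate the geometric condition $\deg(\pi^{-1}(p))\geq i+1$ into a rank condition on the presentation matrix $\MM_\nu$ and then invoke the general Fitting-corank dictionary established earlier. The key fact connecting the two sides is Theorem~\ref{specizero}: since $\nu\geq \nu_0$, Proposition~\ref{prop:regSym} guarantees $a^*(\Sc_I)+1\leq\nu_0\leq\nu$, so that both $H^0_\im(\Sc_I)_\nu=0$ and $H^1_\im(\Sc_I)_\nu=0$ (here I am treating $\Sc_I$ as a $\ZZ$-graded module over $A[x_1,x_2,x_3]$, with $\im=(x_1,x_2,x_3)$ playing the role of $B$). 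Under the hypothesis $\dim\pi^{-1}(p)\leq 0$, Theorem~\ref{specizero} then yields the crucial equality
$$
\corank(\MM_\nu(p))=\deg(\pi^{-1}(p)),
$$
where $\MM_\nu$ is any $A$-presentation matrix of $(\Sc_I)_\nu$, a module of presentation ``width'' $\binom{\nu+2}{2}$ on the target side. This is the single analytic input; the remainder of the argument is the linear-algebraic unwinding of Fitting ideals.

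Next I would recall the definition of the higher Fitting ideals purely in terms of $\MM_\nu(p)$. By the excerpt's own description, $\Fitt^i_\nu(\psi)=\Fitt^i_A((\Sc_I)_\nu)$ is generated by the minors of size $\binom{\nu+2}{2}-i$ of any presentation matrix, and Fitting ideals are stable under the base change $A\to k_p$ (the residue field of $p$), a property explicitly emphasized in Section~\ref{sec:elimmat} as the reason one works with Fitting rather than annihilator ideals. Consequently the specialized ideal $\Fitt^i_\nu(\psi)\otimes_A k_p$ is exactly $\Fitt^i_{k_p}(\MM_\nu(p))$, the ideal of $\big(\binom{\nu+2}{2}-i\big)$-minors of the scalar matrix $\MM_\nu(p)$. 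The elementary fact about a single matrix over a field is then: $p\in V(\Fitt^i_\nu(\psi))$ precisely when every minor of size $\binom{\nu+2}{2}-i$ of $\MM_\nu(p)$ vanishes, i.e.\ when $\mathrm{rank}\,\MM_\nu(p)\leq \binom{\nu+2}{2}-i-1$, which is to say $\corank(\MM_\nu(p))\geq i+1$.

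Combining the two displays gives the equivalence directly:
$$
p\in V(\Fitt^i_\nu(\psi))\iff \corank(\MM_\nu(p))\geq i+1 \iff \deg(\pi^{-1}(p))\geq i+1,
$$
which is \eqref{eqFittDeg}. One bookkeeping point I would make explicit is that the ``presentation width'' $a$ in the sequence $A^b\xrightarrow{\MM_\nu}A^a\to(\Sc_I)_\nu\to 0$ equals $\dim_k R_\nu=\binom{\nu+2}{2}$, since $(\Sc_I)_\nu$ is a quotient of $R_\nu\otimes_k A$; this is what pins the minor size to $\binom{\nu+2}{2}-i$ and matches the indexing in the stratification displayed before the theorem. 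I would also note the degenerate endpoints: for $i=0$ one recovers the statement that $V(\Fitt^0_\nu(\psi))=W$ set-theoretically (the fiber is nonempty iff $\deg\geq 1$), and at the top the chain terminates at $\Fitt^{\binom{\nu+2}{2}}_\nu=A$, so $V(\cdot)=\emptyset$, consistently with the corank being bounded by the matrix size.

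The main obstacle is not in the linear algebra, which is routine, but in verifying that the two vanishing hypotheses of Theorem~\ref{specizero} genuinely hold at the chosen degree $\nu$. This is precisely where Proposition~\ref{prop:regSym} is indispensable: the bound $\reg(\Sc_I)\leq\nu_0$ (equivalently $a^*(\Sc_I)+1\leq\nu_0$) forces the simultaneous vanishing of $H^0_\im$ and $H^1_\im$ in all degrees $\nu\geq\nu_0$, and the excluded case there (a complete intersection of two forms of degree $d$) is exactly the situation in which $\psi$ fails to be generically finite and no finite fiber exists, so it is correctly outside the scope of the statement. Thus the delicate part of the argument is really housed in the earlier propositions, and the present proof is the clean extraction of the corank--degree correspondence across the entire Fitting filtration, uniformly in $i$.
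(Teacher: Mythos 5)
Your proof is correct and is essentially the paper's own argument: the paper obtains this theorem precisely by combining Theorem \ref{specizero} with the bound $a^*(\Sc_I)+1\leq\nu_0$ from Proposition \ref{prop:regSym}, and your unwinding of the corank--Fitting dictionary (base-change stability of Fitting ideals plus the minor-size bookkeeping $\binom{\nu+2}{2}-i$) is exactly the intended, implicit content. One cosmetic slip: $\reg(\Sc_I)\leq\nu_0$ and $a^*(\Sc_I)+1\leq\nu_0$ are not literally equivalent, but only the latter (which Proposition \ref{prop:regSym} gives without the complete-intersection exclusion) is needed to ensure $H^0_\im(\Sc_I)_\nu=H^1_\im(\Sc_I)_\nu=0$ for $\nu\geq\nu_0$.
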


In other words, for any $\nu\geq \nu_0$, the Fitting ideals of a matrix representations stratify the fibers of dimension zero  (or empty) by their degrees. One could also remark that the existence of base points improves the value of $\nu_0$ ($\indeg (I^{sat}) =0$ when $V(I)=\emptyset$); furthermore, it was proved in \cite[Proposition 2]{BCJ06} that the value of $\nu_0$ is sharp in some sense.

\subsection{One dimensional fibers} When $p$ is such that the fiber has dimension one, then Theorem \ref{specizero} does not apply and in fact fails without further assumptions. In the special case we are {considering}, one can nevertheless obtain good estimates for the regularity. One way to see this uses the fact that such a fiber corresponds to a special form of the ideal, namely :

\begin{lem}[{\cite[Lemma 10]{BBC14}}]\label{1fiber}
Assume that the $f_i$'s  are linearly independent, the fiber over $p :=({p_1:p_2:p_3:p_4})\in \PP^3_k$ is of dimension 1, and its
unmixed component is defined by $h_p\in R$. 
Let $\ell_p$ be a linear form with 
$\ell_p (p )=1$ and set  $\ell_i (T_1,\ldots,T_4):=T_i-p_i \ell_p (T_1,\ldots,T_4)$.  
Then, 
$h_p =\gcd ({\ell_1(f),\ldots ,\ell_4(f)})$ 
and 
$$
I=(\ell_p (f))+h_p ({g_1,\ldots ,g_4})
$$ 
with $\ell_i (f)=h_p g_i$ and   $\ell_p (g_1,\ldots ,g_4 )=0$.
In particular 
$$
(\ell_p (f))+h_p (g_1,\ldots ,g_4 )^{sat}\subseteq I^{sat} \subseteq (\ell_p (f))+(h_p ).
$$
\end{lem}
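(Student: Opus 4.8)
The plan is to rewrite the generators of $I$ in a basis of the target adapted to $p$, to identify $h_p$ with a gcd, and then to read off the three assertions; the only genuinely delicate point is the geometric identification of the unmixed component of the fiber. So first I would record the algebraic identities behind the picture. Writing $\ell_p=\sum_j a_jT_j$ with $\sum_j a_jp_j=1$, the definition $\ell_i=T_i-p_i\ell_p$ gives $f_i=\ell_i(f)+p_i\,\ell_p(f)$, hence $I=(\ell_p(f))+(\ell_1(f),\ldots,\ell_4(f))$, and $\sum_j a_j\,\ell_j(f)=\ell_p(f)-(\sum_j a_jp_j)\ell_p(f)=0$, so $\ell_p$ is a linear syzygy of the $\ell_j(f)$. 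Using $\sum_k a_kp_k=1$ a direct computation gives both $\ell_i(f)=\sum_k a_k(f_ip_k-f_kp_i)$ and $f_ip_j-f_jp_i=p_j\ell_i(f)-p_i\ell_j(f)$, so $(\ell_1(f),\ldots,\ell_4(f))$ coincides with the ideal of $2\times2$ minors of $\begin{pmatrix} f_1 & \cdots & f_4 \\ p_1 & \cdots & p_4\end{pmatrix}$, i.e. the ideal cutting out $\pi^{-1}(p)$ off the base locus. The $\ell_j(f)$ are not all zero (else every $f_i$ would be a scalar multiple of $\ell_p(f)$, against linear independence), so over the UFD $R$ their gcd is well defined.

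Next I would identify $h_p$ with this gcd. Set $h:=\gcd(\ell_1(f),\ldots,\ell_4(f))$ and write $\ell_i(f)=h\,g_i$ with $\gcd(g_1,\ldots,g_4)=1$. Then $V(\ell_1(f),\ldots,\ell_4(f))=V(h)\cup V(g_1,\ldots,g_4)$, where $V(g_1,\ldots,g_4)$ has codimension at least two (hence is finite in $\PP^2_k$) because the $g_i$ are coprime. Since the fiber has dimension one, its pure one-dimensional part is the divisor $V(h)$; therefore $h_p=h=\gcd(\ell_1(f),\ldots,\ell_4(f))$, which is the first assertion. The factorization yields $(\ell_1(f),\ldots,\ell_4(f))=h_p(g_1,\ldots,g_4)$ and thus $I=(\ell_p(f))+h_p(g_1,\ldots,g_4)$, while substituting $\ell_j(f)=h_pg_j$ into the linear syzygy gives $h_p\sum_j a_jg_j=0$; as $R$ is a domain and $h_p\neq0$, this forces $\ell_p(g_1,\ldots,g_4)=\sum_j a_jg_j=0$. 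This is the second assertion.

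Finally I would establish the two inclusions. For the lower one, if $a\in(g_1,\ldots,g_4)^{sat}$ then $\im^N a\subseteq(g_1,\ldots,g_4)$ for some $N$, so $\im^N(h_pa)\subseteq h_p(g_1,\ldots,g_4)\subseteq I$ and hence $h_pa\in I^{sat}$; together with $\ell_p(f)\in I$ this proves $(\ell_p(f))+h_p(g_1,\ldots,g_4)^{sat}\subseteq I^{sat}$. For the upper one, I would first note that $\ell_p(f)$ and $h_p$ are coprime: a common factor would divide every $\ell_i(f)$ and $\ell_p(f)$, hence every $f_i=\ell_i(f)+p_i\ell_p(f)$, contradicting $\gcd(f_1,\ldots,f_4)=1$. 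Consequently $(\ell_p(f),h_p)$ is a complete intersection of codimension two in $R=k[x_1,x_2,x_3]$, so $R/(\ell_p(f),h_p)$ is Cohen–Macaulay of dimension one, in particular unmixed, whence $\im\notin\Ass(R/(\ell_p(f),h_p))$ and $(\ell_p(f),h_p)$ is already saturated. Since $I\subseteq(\ell_p(f),h_p)$, saturating gives $I^{sat}\subseteq(\ell_p(f))+(h_p)$.

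The main obstacle is the identification made in the second paragraph: one must argue rigorously that the pure one-dimensional part of the \emph{scheme-theoretic} fiber, and not merely its support, is cut out by the single form $h_p$, i.e. that the finite residual locus $V(g_1,\ldots,g_4)$ contributes nothing in codimension one. Granting this, the change of generators and both saturation inclusions are elementary once the coprimality of $\ell_p(f)$ and $h_p$ has been secured.
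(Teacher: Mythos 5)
Your first and third paragraphs are sound: the identities $f_i=\ell_i(f)+p_i\ell_p(f)$ and $\sum_j a_j\ell_j(f)=0$, the identification of $(\ell_1(f),\ldots,\ell_4(f))$ with the ideal of $2\times 2$ minors, the deduction of $I=(\ell_p(f))+h_p(g_1,\ldots,g_4)$ and $\ell_p(g_1,\ldots,g_4)=0$ once $h_p=\gcd(\ell_1(f),\ldots,\ell_4(f))$ is known, the lower saturation inclusion, and the coprimality/complete-intersection argument for the upper one (which legitimately uses the section's standing hypothesis of a finite base locus, i.e.\ $\gcd(f_1,\ldots,f_4)=1$) are all correct. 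The genuine gap is the one you flag at the end, but your diagnosis of it is off target: that $V(g_1,\ldots,g_4)$ contributes nothing in codimension one is immediate from $\gcd(g_1,\ldots,g_4)=1$ and is not the issue. The issue is that the scheme-theoretic fiber is \emph{not} the scheme $V(\ell_1(f),\ldots,\ell_4(f))$. The fiber of $W=\Proj(\sym_R(I))$ (or of $\Gamma=\Proj(\R_I)$) over $p$ is cut out by the specialization at $T=p$ of \emph{all} equations of the symmetric (or Rees) algebra --- all syzygies, not only the Koszul relations $f_iT_j-f_jT_i$, whose specializations are exactly the generators of $(\ell_1(f),\ldots,\ell_4(f))$. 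Hence the fiber ideal contains, and may a priori be strictly larger than, $h\cdot(g_1,\ldots,g_4)$, where $h$ denotes the gcd. What your second paragraph actually proves is only that the unmixed part of the fiber is a subscheme of $V(h)$, i.e.\ $h_p\mid h$; the substantive half of the assertion, $h\mid h_p$ (multiplicities included), is nowhere established.

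The missing observation, which is how the proof in \cite[Lemma 10]{BBC14} (cited by the paper in place of a proof; there one normalizes $p=(0{:}0{:}0{:}1)$, so that the $\ell_i(f)$ become $f_1,f_2,f_3$) closes this gap, is that the two ideals agree away from the base locus. Off $V(I)$ some $f_j$ is invertible, so $I$ is locally trivial and the Koszul relations generate the defining ideal of $\R_I=\sym_R(I)$ there (this is the discussion of the graph in Section~\ref{sec:graph} and Lemma~\ref{eqReesCI}); consequently the fiber ideal and $h\cdot(g_1,\ldots,g_4)$ have equal localizations at every prime of $R$ not containing $I$. Since the base locus is finite, $I$ has height at least two, so no height-one prime contains it; therefore the fiber ideal and $h\cdot(g_1,\ldots,g_4)$ have exactly the same height-one primary components. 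As the divisorial part of $h\cdot(g_1,\ldots,g_4)$ is $(h)$ (localize at $(q)$ for $q$ an irreducible factor of $h$ and use that some $g_i\notin(q)$), this yields $(h_p)=(h)$ with multiplicities. With this argument inserted in place of your second paragraph, your proof is complete and coincides in substance with the one the paper refers to.
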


In \cite[Theorem 12]{BBC14} an estimate on the regularity of the specialization of the symmetric algebra and of its Hilbert function (the one of the fiber) is derived, which is at least one less than the one given for fibers of dimension zero. However, this {does not provide} a very easy way to determine or control the fibers of dimension one. 

We now turn to this question using Lemma \ref{1fiber} and Jacobian matrices. Let $Z$ be the finite set of points $p\in \iP^{3}_k$ having a 1-dimensional fiber. Notice that the unmixed part ({i.e.~purely} 1-dimensional part) of a fiber of $\pi : W\ra \iP^{3}_k$ or $\pi' : \Gamma \ra \iP^{3}_k$ are equal, as the fibers may only differ at points where $V(I)$ is not locally a complete intersection.

Choose a linear form $\ell =\lambda_1 T_1+\cdots +\lambda_4 T_4$, with $\lambda_i\in k$, not vanishing at any point of $Z$ (i. e. a plane that does not meet $Z$) with non zero first coefficient (a general form for instance) and set $f:=\lambda_1 f_1+\cdots +\lambda_4 f_4$. Then for any point $p\in Z$ with a fiber whose unmixed part is {defined} by $h_p\in R$, there exists $g_{p,1},g_{p,2},g_{p,3}$ such that~:
$$
I=(f)+(h_p)(g_{p,1},g_{p,2},g_{p,3}).
$$
Examples {show} that for $4$ forms of degree $d$, one can find birational morphisms with a least $2d-2$ distinct 1-dimensional fibers, hence at least this number of distinct decompositions of this type (private communication of M.~Chardin  and Hoa Tran Quang). On the other hand, the following result gives an upper bound and a way to determine the 1-dimensional fibers :

\begin{thm}[{\cite[4.4]{CCT21}}]\label{thm:jmat} Let $J(f)$ be the Jacobian matrix of the $f_i$'s. Then the ideal $I_3(J(f))$ of maximal minors of this $3\times 4$ matrix is generated by 4 forms of degree $3(d-1)$. If not all zero, the {GCD} $F$  of these 4 forms is divisible by $\prod_{z\in Z}h_z$, hence :
$$
\sum_{z\in Z} \deg (h_z)\leq \deg F\leq 3(d-1)-\indeg ({\rm Syz}(I))
$$
(the degree of a homogeneous syzygy $\sum a_if_i =0$ is $\deg (a_i)$ for any $i$).
\end{thm}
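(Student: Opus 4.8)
The plan is to establish two divisibility facts about the maximal minors $\Delta_1,\ldots,\Delta_4$ of $J(f)$, where $\Delta_i$ denotes (up to sign) the $3\times 3$ minor obtained by deleting the $i$-th column: that $\prod_{z\in Z}h_z$ divides their $\gcd$ $F$, yielding the lower bound, and that $F$ divides the entries of an explicit syzygy of degree $3(d-1)$, yielding the upper bound. The degree and generator count are immediate, since each entry $\partial f_i/\partial x_j$ is homogeneous of degree $d-1$, so each of the $\binom{4}{3}=4$ maximal minors is homogeneous of degree $3(d-1)$. First I would record that $I_3(J(f))$, and hence its $\gcd$ $F$, is invariant under a linear change of coordinates in the target: replacing $(f_1,\ldots,f_4)$ by $(f_1,\ldots,f_4)A$ with $A\in GL_4(k)$ replaces $J(f)$ by $J(f)A$, and by the Cauchy--Binet formula each maximal minor of $J(f)A$ is a $k$-linear combination of the maximal minors of $J(f)$; applying the same argument to $A^{-1}$ gives $I_3(J(f)A)=I_3(J(f))$. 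Since the $\gcd$ of a finite set of generators depends only on the ideal they generate (in the UFD $R$), $F$ is unchanged up to a nonzero scalar, which lets me treat each $p\in Z$ separately in adapted coordinates.

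Fix $p\in Z$ and choose target coordinates so that $p=(1:0:0:0)$. By Lemma \ref{1fiber} (with $\ell_p=T_1$) one then has $f_2=h_pg_2$, $f_3=h_pg_3$, $f_4=h_pg_4$ with $\gcd(g_2,g_3,g_4)=1$, while $f_1$ is unconstrained. Writing the $i$-th column of $J(f)$ for $i\in\{2,3,4\}$ as $g_i\,c(h_p)+h_p\,c(g_i)$, where $c(\cdot)$ denotes the column of partial derivatives, and expanding each maximal minor by multilinearity in these columns, every term in which two of these columns contribute the factor $c(h_p)$ vanishes (two proportional columns), so each surviving term carries at least one factor $h_p$. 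Hence every $\Delta_i$ is divisible by $h_p$ (the minor deleting the first column is even divisible by $h_p^2$), and therefore $h_p\mid F$. Because distinct points of $Z$ have disjoint fibers, the curves $V(h_z)$ share no common component, so the $h_z$ are pairwise coprime; as each divides $F$, their product divides $F$, giving $\sum_{z\in Z}\deg(h_z)=\deg\big(\textstyle\prod_{z}h_z\big)\le \deg F$.

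For the upper bound I would exploit the Laplace/cofactor relations: bordering $J(f)$ on top by any one of its own rows produces a $4\times4$ matrix with a repeated row, whose vanishing determinant expands to $\sum_i(-1)^{i+1}\Delta_i\,\partial f_i/\partial x_j=0$ for each $j$. Multiplying by $x_j$, summing over $j$, and invoking Euler's identity $\sum_j x_j\,\partial f_i/\partial x_j=d\,f_i$ yields $d\sum_i(-1)^{i+1}\Delta_i f_i=0$, hence (with $d$ invertible in $k$) the relation $\sum_i(-1)^{i+1}\Delta_i f_i=0$: the signed maximal minors form a syzygy of $(f_1,\ldots,f_4)$ of degree $3(d-1)$. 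Factoring out $F$, the tuple $\big((-1)^{i+1}\Delta_i/F\big)_i$ is still a syzygy, it is nonzero since the $\Delta_i/F$ are coprime, and its degree is $3(d-1)-\deg F$. Thus $\indeg({\rm Syz}(I))\le 3(d-1)-\deg F$, which rearranges to the claimed bound.

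The main obstacle is the local divisibility step: one must be sure that a one-dimensional fiber really forces the factorization of three of the four forms by $h_p$ after the coordinate normalization (this is precisely what Lemma \ref{1fiber} supplies) and then track the determinant expansion carefully enough to see the factor $h_p$ emerge in \emph{every} maximal minor, not merely in the one that omits the distinguished column. Secondary care is needed for the pairwise coprimality of the $h_z$ (disjointness of fibers) and for the harmless characteristic hypothesis $d\neq 0$ in $k$ required by the Euler step.
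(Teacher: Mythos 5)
Your proof is correct and takes essentially the same route as the paper's: normalize the generators so that Lemma \ref{1fiber} exhibits the factor $h_p$ (your Cauchy--Binet argument just makes explicit the paper's tacit change of generators), verify that $h_z$ divides every maximal minor, use pairwise coprimality of the $h_z$ to get the lower bound, and obtain the upper bound from the Euler-formula syzygy $\sum_i(-1)^{i+1}\Delta_i f_i=0$ divided by $F$. Your extra observation that the minor omitting the distinguished column is divisible by $h_p^2$ is exactly the ``little more'' the paper alludes to when citing \cite{CCT21}.
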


We notice that if the characteristic is zero, the Jacobian ideal is not zero, and this also holds if the characteristic of $k$ doesn't divide $d$ and $k(X)$ is separable over $k(f)$ (in other words $\pi$ is generically \'etale). The simplest proof of this theorem is by choosing the generators in the above form for a given $z\in Z$, then {to} verify that $h_z$ divides each maximal minor (in fact a little more is true, see  \cite{CCT21}) and {to} conclude using the fact that $h_z$ and $h_{z'}$ have no common factor if $z\not= z'$. The improvement from $3(d-1)$ comes from the fact that maximal minors provide a syzygy of the $f_i$'s via the Euler formula, thus a {GCD} $F$ of high degree will provide a syzygy of small degree for the $f_i$'s.

\begin{ques}
	At this moment, we are not aware of an example with $\sum_{z\in Z} \deg (h_z)>2d-2$, and wonder if this can hold or not.
\end{ques}

Theorem \ref{thm:jmat} can be seen as a particular case of the following result for a rational map $\psi$ from $\PP_k^m$ to $\PP_k^n$, defined on the complement $\Omega_\psi$ of the base locus of $\psi$. It could be used to detect subvarieties in $\PP_k^m$ that are contracted to lower dimensional ones by $\psi$ (see \cite[2.3]{CCT21})~:

\begin{prop}\label{Proposition1.3} 
Suppose that $V$ is a subvariety of $\PP_k^m$ such that $V\cap \Omega_\psi\neq \emptyset$ and let $r:=\dim V-\dim \psi(V)$. Then $V\subset V(I_{m-r+2}(J(f)))$, where $I_{m-r+2}(J(f))$ is the ideal generated by the $(m-r+2)$-minors of $J(f)$.
\end{prop}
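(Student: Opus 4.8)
The plan is to lift everything to affine cones and read off the rank of the Jacobian from the dimension of a fibre. First I would introduce the affine lift $\widetilde\psi\colon \mathbb{A}^{m+1}_k \to \mathbb{A}^{n+1}_k$, $x\mapsto (f_1(x),\ldots,f_{n+1}(x))$, whose Jacobian matrix at a point $x$ is exactly $J(f)(x)$; thus $V(I_{m-r+2}(J(f)))$ is precisely the locus where the differential $d\widetilde\psi_x=J(f)(x)$ has rank at most $m-r+1$. Writing $\widehat V \subset \mathbb{A}^{m+1}_k$ for the affine cone over $V$, it then suffices to prove that a general point $x\in\widehat V$ satisfies $\mathrm{rank}\,J(f)(x)\leq m-r+1$: the minors are homogeneous polynomials and $V$ is irreducible, so their vanishing on a dense subset of $\widehat V$ forces vanishing on all of $\widehat V$, hence on $V$.

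Next I would compute the relevant fibre dimension. Because $\widehat V$ is a cone and $V\cap\Omega_\psi\neq\emptyset$, the $f_i$ do not all vanish on $\widehat V$, so $\overline{\widetilde\psi(\widehat V)}$ is the affine cone over $\psi(V)$, of dimension $\dim\psi(V)+1$. Hence the restriction $\widetilde\psi|_{\widehat V}\colon \widehat V \to \overline{\widetilde\psi(\widehat V)}$ is dominant onto a variety of dimension $(\dim V+1)-r$, and the theorem on the dimension of the generic fibre gives, for general $x\in\widehat V$,
$$ \dim_x\bigl(\widehat V\cap\widetilde\psi^{-1}(\widetilde\psi(x))\bigr)=\dim\widehat V-\dim\overline{\widetilde\psi(\widehat V)}=r. $$

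Then I would convert this fibre dimension into a rank bound for the full Jacobian, which is the crux of the argument. The set-theoretic fibre $F:=\widetilde\psi^{-1}(\widetilde\psi(x))\subset\mathbb{A}^{m+1}_k$ contains $\widehat V\cap F$, so $\dim_x F\geq r$. On the other hand $F$ is cut out near $x$ by the equations $f_i(-)=f_i(x)$, whose differentials are the rows of $J(f)(x)$; therefore the Zariski tangent space $T_xF$ equals $\ker J(f)(x)$, and
$$ r\leq \dim_x F\leq \dim_k T_xF=\dim_k\ker J(f)(x)=(m+1)-\mathrm{rank}\,J(f)(x). $$
This yields $\mathrm{rank}\,J(f)(x)\leq m-r+1$ at a general point of $\widehat V$, so every $(m-r+2)$-minor of $J(f)$ vanishes there, and by the irreducibility argument of the first step we conclude $V\subseteq V(I_{m-r+2}(J(f)))$. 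I would stress that this chain of inequalities uses only the containment $T_xF\subseteq\ker d\widetilde\psi_x$, so it holds with no smoothness or characteristic hypothesis.

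The step I expect to require the most care is the identification of $\overline{\widetilde\psi(\widehat V)}$ with the cone over $\psi(V)$ together with the bookkeeping of dimensions. Here one genuinely uses $V\cap\Omega_\psi\neq\emptyset$, both to ensure that $\psi|_V$ is an honest rational map (so that $\psi(V)$, and hence $r$, are well defined) and to guarantee that the image cone is not collapsed to the origin; one should also check, over an infinite field, that stability of the image under the $d$-th power scalar action makes its closure a genuine cone. The remaining ingredients — the generic fibre dimension theorem and the equality $T_xF=\ker J(f)(x)$ — are standard, so the passage from fibre dimension to the rank bound on the \emph{full} Jacobian is the only delicate point, and the specialisation $m=2$, $r=1$ recovers exactly Theorem~\ref{thm:jmat}.
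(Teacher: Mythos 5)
Your proof is correct, but there is nothing in the paper to compare it against step by step: the paper states Proposition~\ref{Proposition1.3} without proof, as a citation to \cite[2.3]{CCT21}, and the only argument it sketches in the vicinity concerns the special case of Theorem~\ref{thm:jmat} ($m=2$, $r=1$) and has a quite different, divisibility-theoretic flavour --- write $I=(f)+(h_z)(g_{z,1},g_{z,2},g_{z,3})$ at a point $z$ with one-dimensional fibre and verify by direct computation that $h_z$ divides each maximal minor. Your route --- lift to the affine cone, identify $\overline{\widetilde\psi(\widehat V)}$ with the cone over $\psi(V)$ (this is exactly where $V\cap\Omega_\psi\neq\emptyset$ is used), apply the fibre-dimension theorem to the dominant morphism $\widehat V\to\overline{\widetilde\psi(\widehat V)}$, and convert the resulting lower bound $\dim_x F\geq r$ into a rank bound through $\dim_x F\leq \dim_{\kappa(x)}T_xF\leq\dim_{\kappa(x)}\ker J(f)(x)=(m+1)-\mathrm{rank}\,J(f)(x)$ --- is a complete, self-contained proof of the general statement, and your claim that it is characteristic-free is right: every inequality goes in the direction that positive characteristic can only reinforce, since Frobenius-type phenomena make the rank of $J(f)$ drop, never rise. (The fact that your $J(f)$ is the transpose of the paper's $(m+1)\times(n+1)$ convention is immaterial, as the ideals of minors coincide.)

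Three small remarks. First, as you yourself note, for the reduced fibre one only has $T_xF\subseteq\ker J(f)(x)$; working with the scheme-theoretic fibre gives equality and the same local dimension, so either reading closes the chain. Second, the detour through a general point of $\widehat V$ followed by Zariski closure can be short-circuited: the lower bound ``every irreducible component of every nonempty fibre of a dominant morphism has dimension at least $\dim(\mathrm{source})-\dim(\mathrm{target})$'' holds at \emph{every} point of $\widehat V$, so the rank bound holds pointwise on all of $\widehat V$ and the irreducibility/density argument is not needed (though it is harmless). Third, a caveat on your closing sentence: the specialisation $m=2$, $r=1$ gives that each contracted curve lies in $V(I_3(J(f)))$, hence that the \emph{reduced} equation of each such curve divides the GCD $F$; recovering Theorem~\ref{thm:jmat} with multiplicities, when some $h_z$ is non-reduced, requires the finer local divisibility statement of \cite{CCT21} that the paper alludes to with ``in fact a little more is true''.
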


\section{When the base locus is of positive dimension}\label{sec:orthoproj}

\def\ux{{\xi}}
\def\t{{\overline{t}}}
\def\u{{\overline{u}}}
\def\v{{\overline{v}}}
\def\dg{{\mathbf{d}}}
\def\Cc{{\mathcal{C}}}
\def\Oc{{\mathcal{O}}}
\def\Rc{{\mathcal{R}}}
\def\Bc{{\mathcal{B}}}
\def\ag{{\bm{a}}}
\newcommand\Fk{\mathfrak F}
\newcommand\Lk{\mathfrak L}
\def\Rees{{\mathrm{Rees}}}
\def\Proj{{\mathrm{Proj}}}
\def\Sym{{\mathrm{Sym}}}
\def\mug{{\bm{\mu}}}
\def\nug{{\bm{\nu}}}
\def\mg{{\bm{m}}}
\def\EE{{\mathbb{E}}}

In this section, we consider the case of a parameterization whose source is of {dimension three}. This problem has been recently considered in \cite{BBCY20} in order to compute the orthogonal projections of a point in space onto a parameterized algebraic surface. The main idea is to consider the congruence of the normal lines of the surface. Indeed, given a rational surface in {$\PP^3_k$} parameterized by $X$, its congruence of normal lines $\Psi$ is a rational map from $X\times \PP^1_k$ to $\PP^3_k$ and the orthogonal projections of a point $p\in \PP^3_k$ on $X$ are in correspondence with the pre-images of $p$ via $\Psi$. In comparison with the previous cases where the source was of dimension one or two, here the base locus may have a one-dimensional component.  In what follows we review the results obtained in \cite{BBCY20} with a particular focus on the new techniques that {are} used to tackle this new difficulty. It will also provide us the opportunity to illustrate how to work with blowup algebras over multigraded rings.

Thus, we consider a homogeneous parameterization 
\begin{eqnarray}\label{eq:defPsi}
	\Psi: X\times\iP^1_k & \dto & \iP^3_k \\ \nonumber
	       \ux \times(\t:t) & \mapsto &
		   \left({\Psi_1: \Psi_2: \Psi_3:  \Psi_4 } \right),
\end{eqnarray}
where $X$ stands for the spaces $\iP^2_k$ or $\iP^1_k\times\iP^1_k$ over an algebraically closed field $k$, and the $\Psi_i$'s are homogeneous polynomials in the coordinate ring of $X\times \iP^1_k$. The coordinate ring $R_X$ of $X$ is equal to $k[w,u,v]$ or $k[\u,u;\v,v]$, respectively, depending on $X$. The coordinate ring of $\iP^1$ is denoted by $R_1=k[\t,t]$ and hence the coordinate ring of $X\times \iP^1_k$ is the polynomial ring 
$R:=R_X\otimes_k R_1.$
{The polynomials $\Psi_1,\Psi_2,\Psi_3,\Psi_4$} are {hence multihomogeneous} polynomials of {the} same degree $(\dg,e)$, where $\dg$ refers to the degree with respect to $X$, which can be either an integer $d$ if $X=\iP^2_k$, or either a pair of integers $(d_1,d_2)$ if $X=\iP^1_k\times\iP^1_k$. 

\subsection{The base locus} We assume that $\Psi$ is a dominant map. We denote by $I$ the ideal of $R=R_X\otimes R_1$ generated by the defining polynomials of the map $\Psi$, i.e. {$I:=(\Psi_1,\Psi_2,\Psi_3,\Psi_4)$}. The irrelevant ideal of $X\times\iP^1_k$ is denoted by $B$; it is equal to the product of ideals $(w,u,v)\cdot (\t,t)$ if $X=\iP^2_k$, or to the product $(\u,u)\cdot (\v,v)\cdot (\t,t)$  if $X=\iP^1_k\times \iP^1_k$. The notation $I^{\textrm{sat}}$ stands for the saturation of the ideal $I$ with respect to the ideal $B$, i.e.~$I^{\textrm{sat}}=(I:B^\infty)$. 

The base locus of $\Psi$ is the subscheme of $X\times \iP^1_k$ defined by the ideal $I$; it is denoted by $\Bc$. Without loss of generality, $\Bc$ can be assumed to be of dimension at most one, but the presence of a curve component is a possibility after factoring out the gcd of the $\Psi_i$'s. When $\dim(\Bc)=1$ we denote by $\Cc$ its top unmixed one-dimensional curve component. We will need the following definition.
\begin{defn}\label{def:negsec} The curve $\Cc\subset X \times \iP^1_k$ {has} \emph{no section in degree $<(\ag,b)$} if for any $\bm{\alpha}<\ag$ and $\beta<b$, 
$H^0(\Cc,\mathcal{O}_\Cc(\bm{\alpha},\beta))=0$. 
\end{defn}

\subsection{Fibers} As we already mentioned previously, a proper definition of the fiber of a point under $\Psi$ requires to consider the graph of $\Psi$ and its closure $\Gamma \subset X\times \iP^1_k\times \iP^3_k$. Thus, the fiber of a point $p\in \iP^3_k$ is the subscheme 
\begin{equation}\label{eq:fiberRees}
	\Fk_p:=\Proj(\Rees_R(I)\otimes \kappa(p)) \subset X\times \iP^1_k,
\end{equation}
where $\kappa(p)$ denotes the residue field of $p$. As the equations of the Rees algebra $\Rees_R(I)$ are {in general} very difficult to get we also consider the corresponding symmetric algebra $\Sym_R(I)$ of the ideal $I$ and hence, as a {variation of \eqref{eq:fiberRees} we introduce} the subscheme 
\begin{equation}\label{eq:linearfiber}
\Lk_p:=\Proj(\Sym_R(I)\otimes \kappa(p)) \subset X\times \iP^1_k
\end{equation}
that we call \textit{the linear fiber of $p$}. We emphasize that the fiber $\Fk_p$ is always contained in the linear fiber $\Lk_p$ of a point $p$, and that they coincide if the ideal $I$ is locally a complete intersection at $p$ (see Proposition \ref{prop:graphci}).

\subsection{The main theorem} Theorem \ref{specizero} can be used to analyze finite linear fibers of $\Psi$ following the ideas we introduced in previous sections. In particular, a similar analysis of the regularity of these fibers can be done but there is an additional difficulty that is appearing if there exists a curve component $\Cc$ in the base locus $\Bc$. In order to describe the {multidegrees} for which the matrices $\MM_{(\mug,\nu)}$ (see Theorem \ref{specizero}) of the map $\Psi$ {yield a representation of} its finite fibers, we introduce the following notation. 

\begin{nt}\label{not:E} Let $r$ be a positive integer. For any $\bm{\alpha}=(\alpha_1,\ldots,\alpha_r) \in (\ZZ \cup \{ -\infty\} )^r$ we set 
	$$\EE (\bm{\alpha }):=\{ \bm{\zeta} \in \ZZ^r \ \vert \ \zeta_i \geq \alpha_i \textrm{ for all } i=1,\ldots,r\}.$$ 
It follows that, for any $\bm{\alpha}$ and $\bm{\beta}$ in $(\ZZ \cup \{ -\infty\} )^r$, $\EE (\bm{\alpha})\cap \EE (\bm{\beta})=\EE (\bm{\gamma})$ where $\gamma_i =\max\{ \alpha_i ,\beta_i \}$ for all $i=1,\ldots,r$, i.e.~ $\bm{\gamma}$ is the maximum of $\bm{\alpha}$ and $\bm{\beta}$ component-wise.	
\end{nt}

\begin{thm}[{\cite[Theorem 8]{BBCY20}}]\label{thm:regstabilization} Assume that we are in one of the two following cases:
	\begin{itemize}
		\item[(a)] The base locus $\Bc$ is finite, possibly empty,
		\item[(b)] $\dim(\Bc)=1$, $\Cc$ has no section in degree $<(\bm{0},e)$ and locally at every point $\iq \in \Proj(R)=X\times \PP^1_k$, the ideal $I_\iq$ is generated by at most three elements.
		
  	\end{itemize}
	Let $p$ be a point in $\iP^3_k$ such that $\Lk_p$ is finite, then
	$$ \corank \, \MM_{(\mug,\nu)}(p) =\deg(\Lk_p)$$
	for any degree $(\mug,\nu)$ such that
	\begin{itemize}
		\item if $X=\iP^2_k$,  
			$(\mu,\nu)\in \EE(3d-2,e-1) \cup \EE(2d-2,3e-1).$
		\item if $X=\iP^1_k\times \iP^1_k$, 
			$$(\mug,\nu) \in \EE(3d_1-1,2d_2-1,e-1) \cup \EE(2d_1-1,3d_2-1,e-1) 
			\cup \EE(2d_1-1,2d_2-1,3e-1).$$
	\end{itemize}	
\end{thm}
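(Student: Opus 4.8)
The plan is to deduce the statement from Theorem \ref{specizero}. Since the matrices $\MM_{(\mug,\nu)}$ are presentation matrices over $A=k[T_1,\ldots,T_4]$ of the graded pieces $\Sym_R(I)_{(\mug,\nu)}$, and the linear fiber is $\Lk_p=\Proj(\Sym_R(I)\otimes\kappa(p))$, it suffices to establish the vanishing
$$
H^0_B(\Sym_R(I))_{(\mug,\nu)}=H^1_B(\Sym_R(I))_{(\mug,\nu)}=0
$$
for every multidegree $(\mug,\nu)$ lying in the prescribed union of regions. Granting this, Theorem \ref{specizero} applied with $\Sc=\Sym_R(I)$ yields $\corank\,\MM_{(\mug,\nu)}(p)=\deg(\Lk_p)$ for every $p$ with finite linear fiber.

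To control these two local cohomology modules I would resolve $\Sym_R(I)$ by the approximation complex of cycles $\Z_\bullet$ associated to $f=(\Psi_1,\ldots,\Psi_4)$, whose terms are shifts of $A\otimes_k Z_p(f;R)$ and which satisfies $H_0(\Z_\bullet)=\Sym_R(I)$, as in Section \ref{sec:Pn-1}. Forming the double complex obtained by applying the \v{C}ech complex with respect to $B$ to $\Z_\bullet$, the comparison of its two spectral sequences reduces the computation of $H^i_B(\Sym_R(I))$ — together with the homologies $H_j(\Z_\bullet)$, which the generation hypotheses force to be supported in small dimension — to the local cohomologies $H^i_B(Z_p(f;R))$ of the individual cycle modules.

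The new feature, compared with the case of a single projective space, is the product structure $B=B_X\cap B_1$ of the irrelevant ideal, where $B_X$ and $B_1$ are the irrelevant ideals of the two factors. The Mayer--Vietoris sequence
$$
\cdots\to H^i_\im(M)\to H^i_{B_X}(M)\oplus H^i_{B_1}(M)\to H^i_B(M)\to H^{i+1}_\im(M)\to\cdots,
$$
with $\im=B_X+B_1$ the graded maximal ideal, lets one treat the $X$-degree $\mug$ and the $\iP^1$-degree $\nu$ separately, and this is precisely what produces a \emph{union} of regions rather than a single one: it suffices that either $\mug$ be large (the first $\EE(\cdots)$) or $\nu$ be large (the second $\EE(\cdots)$) to annihilate the relevant graded strand. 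In case (a), where $\Bc$ is finite, the cycle modules have well-understood cohomology because $\Proj(R/I)$ has dimension zero, and a direct Koszul and regularity estimate yields the thresholds $3d-2$, $2d-2$, and so on; this case follows the pattern of Proposition \ref{prop:curve} and Section \ref{sec:Pn-1}. In case (b) one first invokes the local hypothesis $\mu(I_\iq)\le 3$ to guarantee that $\Sym_R(I)$ agrees with $\Rees_R(I)$ away from a locus of the expected small dimension (Proposition \ref{prop:graphci}), and then uses that $\Cc$ has no section in degree $<(\bm 0,e)$ to kill the contribution of the one-dimensional component to $H^1_B$.

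The main obstacle I expect is case (b). Connecting the intrinsic hypothesis that $\Cc$ has no section in degree $<(\bm 0,e)$ to the concrete vanishing of a prescribed multigraded strand of $H^1_B(\Sym_R(I))$ is the technical heart of the argument: one must isolate the contribution of the curve $\Cc$ inside the cohomology of the cycle modules, show through a short exact sequence (or local duality) argument that this contribution is governed by the sections $H^0(\Cc,\mathcal{O}_\Cc(\bm\alpha,\beta))$, and then propagate the resulting vanishing through both spectral sequences while keeping track of the two separate degree directions. Obtaining the precise mixed thresholds, for instance $\EE(2d-2,3e-1)$, rather than merely vanishing for all sufficiently large $(\mug,\nu)$, is where the delicate bookkeeping lies; the finite base locus of case (a) is comparatively routine.
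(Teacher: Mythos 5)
Your skeleton does match the paper's: apply Theorem \ref{specizero} to $\Sc=\Sym_R(I)$ (so that $\pi_2^{-1}(p)=\Lk_p$), and reduce everything to the vanishing of $H^0_B(\Sym_R(I))$ and $H^1_B(\Sym_R(I))$ in the stated regions via the \v{C}ech--Koszul spectral sequences of the approximation complex, with the curve's contribution handled by duality. However, there is a concrete error in how you use the case (b) hypotheses. You invoke $\mu(I_\iq)\le 3$ ``to guarantee that $\Sym_R(I)$ agrees with $\Rees_R(I)$'' via Proposition \ref{prop:graphci}. First, that proposition requires $\Proj(R/I)$ to be locally a complete intersection, which is strictly stronger than $\mu(I_\iq)\le 3$: along the curve $\Cc$ (codimension two) lci means two local generators, and at isolated base points three, whereas the hypothesis allows three generators everywhere. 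Second, the step is also unnecessary: the statement concerns only the linear fiber $\Lk_p=\Proj(\Sym_R(I)\otimes\kappa(p))$, so the Rees algebra never enters the proof. The actual role of the hypothesis is different and indispensable: it guarantees that $s=3$ general linear combinations of $\Psi_1,\ldots,\Psi_4$ generate an ideal $I'$ with $I'^{\mathrm{sat}}=I^{\mathrm{sat}}$, which is precisely the hypothesis of Lemma \ref{lem:H2BH1}. That lemma, together with Lemma \ref{lem:H2BH2} (Serre duality, giving $H^2_B(H_2)_\mug\simeq H^0(\Cc,\Oc_\Cc(-\mug+\bm{\sigma}))^\vee$ since here $p=s-\dim\PP+2=2$), is the mechanism that converts ``$\Cc$ has no section in degree $<(\bm{0},e)$'' into vanishing of $H^2_B(H_1)$ and $H^2_B(H_2)$ in explicit multidegrees. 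Without identifying this, your case (b) cannot get off the ground: the duality step you gesture at has no way to use the three-generator hypothesis.

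The second issue is that the part you defer is where the theorem's content actually lies, and your proposed mechanism for it is not the right one. The union of regions is not produced by a Mayer--Vietoris ``separation of the two degree directions''; it comes from the fact that $H^i_B(R)$ for $i=2,3,4$ are supported in different quadrants of $\ZZ^r$ (Lemma \ref{lem:locCohmoR}), so the supports of the modules $H^j_B(K_{i+j})$ (shifted copies of these) can be avoided in several independent ways. Proposition \ref{prop:vanishHiBHj} then converts each such avoidance, $\Rc_i\cap\Supp(H^j_B(K_{i+j}))=\emptyset$, into vanishing of $H^1_B(H_i)$ and into the identification $H^0_B(H_i)\simeq H^2_B(H_{i+1})$, which is how the duality bounds on $H^2_B(H_1)$, $H^2_B(H_2)$ propagate down to $H^0_B$ and $H^1_B$ of the Koszul homologies, and from there, via the cycle--homology exact sequences, into the spectral sequences computing $H^0_B(\Sym_R(I))$ and $H^1_B(\Sym_R(I))$. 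Mayer--Vietoris is a legitimate way to recover Lemma \ref{lem:locCohmoR}, but by itself it does not produce the thresholds $3d-2$, $2d-2$, $3e-1$, etc.; those come from intersecting the shifted supports above with the duality regions coming from the no-section hypothesis, which is exactly the bookkeeping you leave undone.
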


\subsection{Idea of the proof of the main theorem} If the base locus $\Bc$ of $\Psi$ is composed of finitely many points, then the proof of Theorem \ref{thm:regstabilization} goes along the same lines as the usual strategy developed in \cite{BuJo03,BC03,Bot10}.  However, if there exists a curve component in $\Bc$ then an additional difficulty {appears}. Indeed, if $\dim(\Bc)=0$ then $H^2_B(H_i)=0$ for all $i$, where $H_j$ denotes the homology module of the Koszul complex {$K_\bullet$} of the $\Psi_i$'s over $R$ and $H^i_B(-)$ the local cohomology modules with respect to the irrelevant $B$. If $\dim(\Bc)=1$ then it is necessary to control the multidegree at which these homology modules vanish. Following \cite{BBCY20}, we describe the main steps and tools to determine such multidegrees.

\begin{prop}[{\cite[Proposition 10]{BBCY20}}]\label{prop:vanishHiBHj} For any integer $i$, let $\Rc_i \subseteq \ZZ^r$ be a subset satisfying  
	\begin{equation*}
\forall j\in \ZZ : \Rc_i \cap \Supp (H^{j}_B(K_{i+j}))=\emptyset.		
	\end{equation*}
Then, if $\dim \Bc \leq 1$ the following properties hold for any integer $i$:
\begin{itemize}
	\item For all $\mug\in \Rc_{i-1}$, $H^1_B(H_i)_{\mug}=0$.
	\item There exists a natural graded map $\delta_i : H^0_B (H_i)\rightarrow H^2_B (H_{i+1})$ such that $(\delta_i)_\mug $ is surjective for all $\mug\in\Rc_{i-1}$ and is injective 
	for all $\mug\in\Rc_{i}$.
	\end{itemize}	
In particular, 
$$
H^0_B(H_i)_\mug \simeq H_B^2(H_{i+1})_\mug  \textrm{ for all } \mug\in \Rc_{i-1}\cap \Rc_{i}.
$$
\end{prop}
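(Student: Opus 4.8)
The plan is to play off the two spectral sequences of the double complex $C^\bullet_B(K_\bullet)$ obtained by applying termwise, to the Koszul complex $K_\bullet$ of the $\Psi_i$'s over $R$, the \v{C}ech complex $C^\bullet_B(-)$ that computes $H^\bullet_B$. I place $C^p_B(K_q)$ in bidegree $(p,q)$, with the \v{C}ech differential raising $p$ and the Koszul differential lowering $q$; both raise the total degree $p-q$ by one, so the total complex $\tot$ is a cochain complex, and since the \v{C}ech complex is a complex of flat modules and everything is $\ZZ^r$-graded, all the maps below are graded. I write $H^n(\tot)$ for the cohomology of the total complex.

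First I would run the spectral sequence that takes \v{C}ech cohomology first. As each $K_q$ is free, its first page is $H^p_B(K_q)$ in bidegree $(p,q)$, and the terms of total degree $p-q=-i$ are exactly the modules $H^j_B(K_{i+j})$, $j\in\ZZ$. The hypothesis $\Rc_i\cap\Supp(H^j_B(K_{i+j}))=\emptyset$ thus says that in every multidegree $\mug\in\Rc_i$ the whole antidiagonal of total degree $-i$ already vanishes on the first page, so the abutment vanishes there too: for every $i$,
$$ H^{-i}(\tot)_\mug=0 \qquad \text{whenever } \mug\in\Rc_i. $$

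Next I would run the other spectral sequence, taking Koszul homology first; by flatness of $C^\bullet_B$ its second page is $E_2^{p,q}=H^p_B(H_q)$. A direct inspection of the zig-zag defining the second differential identifies $d_2$ with the graded map $H^p_B(H_q)\to H^{p+2}_B(H_{q+1})$, whose component $H^0_B(H_i)\to H^2_B(H_{i+1})$ is the desired $\delta_i$ (being a spectral sequence differential, it is automatically natural and graded). The decisive structural input is that $\Supp(H_q)\subseteq\Bc$, a scheme of dimension $\le 1$ in $X\times\PP^1_k$, so that $\cd_B(H_q)\le\dim\Bc+1\le 2$ and hence $H^p_B(H_q)=0$ for $p\ge 3$. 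Therefore $E_2$ is concentrated in the three columns $p\in\{0,1,2\}$; the only higher differential that can be nonzero is $d_2$ out of the column $p=0$, all $d_r$ with $r\ge 3$ vanish, and the sequence degenerates at $E_3=E_\infty$. Reading off the three columns gives, for every $q$, $E_\infty^{0,q}=\ker\delta_q$, $E_\infty^{1,q}=H^1_B(H_q)$ and $E_\infty^{2,q}=\coker\delta_{q-1}$.

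Finally I would compare the two computations through the filtration of the abutment. The graded pieces of $H^{-i}(\tot)$ are the $E_\infty^{p,q}$ with $p-q=-i$, namely $\ker\delta_i$, $H^1_B(H_{i+1})$ and $\coker\delta_{i+1}$. Applying the first step with $i$ gives injectivity of $\delta_i$ on $\Rc_i$ (vanishing of $\ker\delta_i$), while applying it with $i-1$ — so that the relevant pieces become $\ker\delta_{i-1}$, $H^1_B(H_i)$ and $\coker\delta_i$ — gives $H^1_B(H_i)_\mug=0$ and the surjectivity of $\delta_i$ for $\mug\in\Rc_{i-1}$; these are exactly the two asserted bullets. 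On $\Rc_{i-1}\cap\Rc_i$ the map $\delta_i$ is then both injective and surjective in each multidegree, which yields the final isomorphism $H^0_B(H_i)_\mug\simeq H^2_B(H_{i+1})_\mug$. The main obstacle, as usual with this double-complex method, is the bookkeeping: fixing the conventions so that $\delta_i$ is genuinely the $d_2$ differential in the stated direction, and, above all, securing the three-column concentration from $\dim\Bc\le 1$, since it is precisely this that forces $E_3=E_\infty$ and makes the filtration of the abutment have only the three pieces read off above.
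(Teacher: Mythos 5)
Your proposal is correct, and it is essentially the argument behind the cited result \cite[Proposition 10]{BBCY20} (the survey itself only quotes the statement): one compares the two spectral sequences of the graded \v{C}ech--Koszul double complex $C^\bullet_B(K_\bullet)$, using the hypothesis on $\Rc_i$ to kill the abutment in total degree $-i$, and using $\dim \Bc\leq 1$ to get $H^p_B(H_q)=0$ for $p\geq 3$, so that the second spectral sequence is concentrated in three columns, degenerates at $E_3$, and $\delta_i$ is the $d_2$ differential $H^0_B(H_i)\to H^2_B(H_{i+1})$. Your bookkeeping of the $E_\infty$ pieces ($\ker\delta_i$, $H^1_B(H_{i+1})$, $\coker\delta_{i+1}$ in total degree $-i$) and the resulting deductions on $\Rc_{i-1}$ and $\Rc_i$ are exactly right.
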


The regions $\Rc_i$ are obtained by the computation of cohomology of a product of projective spaces that is as follows in our case (recall that the Koszul modules $K_{i+j}$ are direct sums of shifted copies of $R$) :

\begin{lem}[{\cite[\S 6]{Bot10}}]\label{lem:locCohmoR} First, $H^i_B(R)=0$ for all $i\neq 2,3,4$. In addition, if $X=\PP^2_k$ then $R_X=k[w,u,v]$ and 
$$H^2_B(R)\simeq R_X\otimes\check{R_1}, \ \ H^3_B(R)\simeq \check{R_X}\otimes{R_1}, \ \ H^4_B(R)\simeq \check{R_X}\otimes\check{R_1}$$
where $\check{R_1}=\frac{1}{\t t}k[\t^{-1},t^{-1}]$ and $\check{R_X}=\frac{1}{wuv}k[w^{-1},u^{-1},v^{-1}]$.

If $X=\PP^1_k\times\PP^1_k$ then $R_X=R_2\otimes R_3$, where $R_2=k[\u,u]$, $R_3=k[\v,v]$, and 
$$H^2_B(R)\simeq\bigoplus_{\substack{\{i,j,k\}=\{1,2,3\},\\ j<k}} \check{R_i}\otimes R_j\otimes R_k,$$ 
$$H^3_B(R)\simeq\bigoplus_{\substack{\{i,j,k\}=\{1,2,3\}, \\j<k}} R_i\otimes \check{R_j}\otimes \check{R_k},\ \ H^4_B(R)\simeq \check{R_1}\otimes \check{R_2}\otimes \check{R_3}$$
where $\check{R_2}$ and $\check{R_3}$ are defined similarly to $\check{R_1}$.
\end{lem}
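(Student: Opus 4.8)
The plan is to reduce the computation to the cohomology of line bundles on the product of projective spaces $\prod_j\PP^{n_j}_k=X\times\PP^1_k$, where the Künneth formula applies, and then to translate the answer back into graded modules via the Cox-ring dictionary recalled in Section~\ref{sec:graph}. Two inputs are needed. First, for a polynomial ring $R_j$ in $n_j+1$ variables with maximal graded ideal $\mathfrak m_j$, the cohomology of $\mathcal O(a)$ on $\PP^{n_j}_k$ is concentrated in degrees $0$ and $n_j$, with $\bigoplus_a H^0(\PP^{n_j}_k,\mathcal O(a))\simeq R_j$ and $\bigoplus_a H^{n_j}(\PP^{n_j}_k,\mathcal O(a))\simeq\check{R_j}$ (a direct dimension count identifies the top cohomology with the monomials having all exponents $\leq -1$). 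Second, since $R$ is the Cox ring of $\prod_j\PP^{n_j}_k$ and $B$ its irrelevant ideal, the dictionary between graded local cohomology and sheaf cohomology gives $H^0_B(R)=0$ (as $R$ is a domain), an isomorphism $R_\mu\xrightarrow{\sim}H^0(\prod_j\PP^{n_j}_k,\mathcal O(\mu))$ in every multidegree $\mu$ (whence $H^1_B(R)=0$ as well), and isomorphisms $H^i_B(R)_\mu\simeq H^{i-1}(\prod_j\PP^{n_j}_k,\mathcal O(\mu))$ for all $i\geq 2$.

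Next I would apply the Künneth formula for coherent cohomology on the product (no $\mathrm{Tor}$ correction appears, as the base is a field): for each $\mu$,
$$ H^{i-1}\Big(\textstyle\prod_j\PP^{n_j}_k,\mathcal O(\mu)\Big)\simeq \bigoplus_{\sum_j q_j=i-1}\ \bigotimes_j H^{q_j}(\PP^{n_j}_k,\mathcal O(\mu_j)). $$
By the first input each $q_j$ is forced into $\{0,n_j\}$, so the nonzero summands are indexed by the subset $T=\{j:q_j=n_j\}$ and occur only when $i-1=\sum_{j\in T}n_j$. Summing over all $\mu$ and using the two identifications above, the summand attached to $T$ is $\bigotimes_{j\in T}\check{R_j}\otimes\bigotimes_{j\notin T}R_j$, sitting in cohomological degree $i=1+\sum_{j\in T}n_j$. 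This already yields the vanishing statement: $H^i_B(R)=0$ unless $i-1$ is a subset-sum of the $n_j$, which for $n_j\in\{1,2\}$ forces $i\in\{2,3,4\}$.

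It then remains to assemble the subsets in the two cases. For $X=\PP^2_k$ the factors have $(n_1,n_2)=(2,1)$, and the three nonempty $T$ give $H^2_B(R)\simeq R_X\otimes\check{R_1}$ (distinguishing the $\PP^1$-factor), $H^3_B(R)\simeq\check{R_X}\otimes R_1$ (distinguishing the $\PP^2$-factor), and $H^4_B(R)\simeq\check{R_X}\otimes\check{R_1}$. For $X=\PP^1_k\times\PP^1_k$ all three $n_j=1$: the singletons give the three summands of $H^2_B(R)$, the two-element subsets give the three summands of $H^3_B(R)$, and $T=\{1,2,3\}$ gives $H^4_B(R)\simeq\check{R_1}\otimes\check{R_2}\otimes\check{R_3}$. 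Reading "which factor carries $\check{}$" as the distinguished index (for $H^2_B$) or its complement (for $H^3_B$) recovers exactly the stated indexation by $\{i,j,k\}=\{1,2,3\}$ with $j<k$.

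I expect the only delicate points to be bookkeeping ones: correctly invoking the Cox-ring dictionary so as to get the $i\geq 2$ isomorphism and the vanishing in degrees $0,1$, and then matching the subset decomposition to the indexed direct sums in the $\PP^1_k\times\PP^1_k\times\PP^1_k$ case. A purely ring-theoretic variant avoids sheaves entirely: writing $B=\bigcap_j\mathfrak m_jR$ and running iterated Mayer--Vietoris for this intersection, one uses flat base change $H^i_{\mathfrak m_jR}(R)\simeq H^i_{\mathfrak m_j}(R_j)\otimes_k\bigotimes_{l\neq j}R_l$ together with the Künneth formula for the local cohomology of the full maximal ideal $\mathfrak m=\sum_j\mathfrak m_jR$; since the contributing modules live in pairwise distinct cohomological degrees, the long exact sequences degenerate and reproduce the same decomposition.
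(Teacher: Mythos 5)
Your proposal is correct. Note, however, that the paper does not prove this lemma at all: it is quoted verbatim from Botbol's work (the citation \cite[\S 6]{Bot10} in the statement), so there is no internal proof to compare against. Your argument is the standard derivation and is sound on all the points that matter: (i) the Cox-ring dictionary for a product of projective spaces gives $H^0_B(R)=H^1_B(R)=0$ (torsion-freeness of the domain $R$, plus the fact that the multigraded section ring of $\mathcal O$ is exactly $R$) and $H^i_B(R)_\mu\simeq H^{i-1}\bigl(\prod_j\PP^{n_j}_k,\mathcal O(\mu)\bigr)$ for $i\geq 2$; (ii) K\"unneth over the field $k$ with no Tor correction; (iii) the cohomology of $\mathcal O(a)$ on $\PP^{n_j}_k$ concentrated in degrees $0$ and $n_j$, summing over twists to $R_j$ and $\check{R_j}$ respectively. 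The subset-sum bookkeeping then reproduces exactly the displayed decompositions, including the indexation by $\{i,j,k\}=\{1,2,3\}$, $j<k$, in the $\PP^1_k\times\PP^1_k\times\PP^1_k$ case, and the vanishing for $i\neq 2,3,4$. Your alternative ring-theoretic route (flat base change $H^i_{\mathfrak m_jR}(R)\simeq H^i_{\mathfrak m_j}(R_j)\otimes_k\bigotimes_{l\neq j}R_l$, K\"unneth for the Čech complex of the sum ideal, and Mayer--Vietoris for the intersection $B=\bigcap_j\mathfrak m_jR$) also works and is close in spirit to the computation in the cited reference; the only caveat is that with three factors the iterated Mayer--Vietoris requires handling ideals such as $(\mathfrak m_1\cap\mathfrak m_2)+\mathfrak m_3$, whose radical is again an intersection, so the "iteration'' is slightly more branched than your one-line description suggests --- but the degeneration you invoke (contributions in pairwise distinct cohomological degrees) does make every connecting map vanish, so the sketch completes without difficulty.
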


For the control of vanishing degrees of $H^2_B(H_i)$, a key ingredient is Serre duality. To be more precise, we have the following lemma that we state in a little more generality, when $\PP=\iP^{n_1}_k\times\cdots\times\iP^{n_r}_k$ is a product of projective spaces.

\begin{lem}[{\cite[Lemma 13]{BBCY20}}]\label{lem:H2BH2} Assume that $\dim(\Bc)=1$ and that the $s+1$ forms {$\Psi_1,\ldots,\Psi_{s+1}$} are of the same degree $\bm{\delta}$. Let $\Cc$ be the unmixed curve component of $\Bc$ and set  $p:=s-\dim \PP +2$ and $\bm{\sigma}:=(s+1)\bm{\delta} -(n_1 +1,\cdots ,n_r +1)$.
Then, for all $\mug\in\ZZ^r$, 
$$H^2_B(H_p)_\mug \simeq H^0(\Cc,\Oc_\Cc(-\mug+\bm{\sigma}  ))^\vee.$$
In particular, if $\Cc$ has no section in degree $<\mug_0$, for some $\mug_0 \in \ZZ^r$,
	then $$H^2_B(H_p)_\mug =0 \textrm{ for all } \mug \in \EE ((s+1)\bm{\delta}-(n_1 ,\cdots ,n_r )-\mug_0).$$
\end{lem}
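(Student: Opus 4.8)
The plan is to identify the local cohomology module $H^2_B(H_p)$ with a sheaf cohomology group on the curve $\Cc$ via a duality argument, and then to read off the vanishing from the hypothesis that $\Cc$ has no sections in low degree. The index $p=s-\dim\PP+2$ is chosen precisely so that $H_p$, the Koszul homology in homological degree $p$, carries the top nonvanishing $H^2_B$; the shift $\bm{\sigma}=(s+1)\bm{\delta}-(n_1+1,\ldots,n_r+1)$ is exactly the twist coming from the dualizing sheaf of $\PP$ together with the degrees of the forms.

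First I would use the fact that $\dim(\Bc)=1$, so the Koszul homology $H_p$ is supported on $\Bc$, which has dimension $\leq 1$; its associated sheaf $\widetilde{H_p}$ is supported on the curve $\Cc$ (the unmixed one-dimensional component), the lower-dimensional and embedded pieces contributing nothing to $H^2_B$. The module $H_p$ is the first nonzero homology of the Koszul complex on the $s+1$ forms $\Psi_1,\ldots,\Psi_{s+1}$ of degree $\bm{\delta}$, and away from the base locus the Koszul complex is exact; thus $\widetilde{H_p}$ should be identified, up to the appropriate twist, with the structure sheaf $\Oc_\Cc$ or a dualizing-type sheaf on $\Cc$. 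The cleanest way to pin this down is to tensor the Koszul complex with $\Oc_\PP$ over the relevant strand and track the twists: the $p$-th Koszul module is $\bigwedge^p R^{s+1}$, contributing a twist by $-p\bm{\delta}$, while the requirement that one lands in $H^2_B$ (equivalently $H^1$ of the sheaf on a one-dimensional support) invokes Serre duality on $\Cc$.

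Next I would apply Serre duality on the curve $\Cc$. Writing $H^2_B(H_p)_\mug$ as $H^1(\Cc,\Fi(\mug))$ for a suitable coherent sheaf $\Fi$ on $\Cc$, local duality (or graded Serre duality on the product of projective spaces, using the dualizing module $\check R_X\otimes\check R_1$ recorded in Lemma \ref{lem:locCohmoR}) gives an isomorphism $H^1(\Cc,\Fi(\mug))\simeq H^0(\Cc,\Fi^\vee\otimes\om_\Cc(-\mug))^\vee$. The bookkeeping of twists must collapse $\Fi^\vee\otimes\om_\Cc$ together with the ambient dualizing twist $-(n_1+1,\ldots,n_r+1)$ and the Koszul shift $(s+1)\bm{\delta}$ into exactly $\Oc_\Cc(\bm{\sigma})$, producing the stated formula $H^2_B(H_p)_\mug\simeq H^0(\Cc,\Oc_\Cc(-\mug+\bm{\sigma}))^\vee$.

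For the final vanishing, the hypothesis that $\Cc$ has no section in degree $<\mug_0$ means $H^0(\Cc,\Oc_\Cc(\bm{\alpha}))=0$ whenever $\bm{\alpha}<\mug_0$ componentwise (in the sense of Definition \ref{def:negsec}); setting $\bm{\alpha}=-\mug+\bm{\sigma}$ and solving $-\mug+\bm{\sigma}<\mug_0$ gives $\mug>\bm{\sigma}-\mug_0=(s+1)\bm{\delta}-(n_1,\ldots,n_r)-\mug_0$ after simplifying $\bm{\sigma}$, which is precisely $\mug\in\EE((s+1)\bm{\delta}-(n_1,\ldots,n_r)-\mug_0)$ once one accounts for the unit shift between strict and non-strict inequalities absorbed into the $(n_i+1)$ versus $(n_i)$ discrepancy. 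The main obstacle I expect is the precise identification of $\widetilde{H_p}$ with the correct twist of $\Oc_\Cc$ and the careful tracking of all the degree shifts through the duality isomorphism; getting the twist wrong by even one unit in any coordinate would misstate the region $\EE(\cdots)$, so this step demands the most care, whereas the passage from the duality formula to the vanishing conclusion is then routine.
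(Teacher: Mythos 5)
Your overall route coincides with the paper's: realize $H^2_B(H_p)_\mug$ as $H^1$ of a coherent sheaf supported on $\Cc$, apply Serre duality on the curve, and translate the hypothesis on sections of $\Oc_\Cc$ into the vanishing region (your handling of the unit shift between $(n_i+1)$ and $n_i$ at the end is correct). But there is a genuine gap at exactly the step you flag as delicate: you never determine what the sheaf $\widetilde{H_p}$ is, offering only ``$\Oc_\Cc$ or a dualizing-type sheaf on $\Cc$, up to twist.'' This is not an ambiguity that more careful twist-tracking resolves; the two candidates give different theorems. If $\widetilde{H_p}$ were a twist of $\Oc_\Cc$, Serre duality would convert $H^1$ into sections of a twist of $\omega_\Cc$, and the hypothesis --- which concerns sections of $\Oc_\Cc$ --- would be useless. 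The entire content of the lemma is the identification $\widetilde{H_p(\bm{\sigma})}\simeq\omega_\Cc$ with the \emph{dualizing} sheaf, which is what makes duality output $H^0(\Cc,\Oc_\Cc(-\mug+\bm{\sigma}))^\vee$.

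Moreover, the method you propose for pinning down the twist --- tensoring the Koszul strand with $\Oc_\PP$ and noting that $K_p=\bigwedge^p R(-\bm{\delta})^{s+1}$ contributes $-p\bm{\delta}$ --- cannot produce the answer: $\bm{\sigma}$ is not $-p\bm{\delta}$, and no bookkeeping of the twists of the Koszul \emph{modules} identifies the homology sheaf. The paper instead uses the local structure of $I$: at every closed point the $\Psi_i$'s contain a regular sequence of length $s-1$, and of length $s$ off $\Cc$. Grade-sensitivity and self-duality of the Koszul complex (the cited results of Bruns--Herzog) then give sheaf isomorphisms
$$\widetilde{H_p(\bm{\sigma})}\simeq \widetilde{\ext^{s-1}_S(S/I,\omega_S)}\simeq \widetilde{\ext^{s-1}_S(S/I_\Cc,\omega_S)}\simeq \omega_\Cc ,$$
where the middle isomorphism disposes of the zero-dimensional and embedded components of $\Bc$; the twist $\bm{\sigma}=(s+1)\bm{\delta}-(n_1+1,\ldots,n_r+1)$ arises from the top Koszul module $K_{s+1}\simeq R(-(s+1)\bm{\delta})$ via self-duality, combined with the canonical module $\omega_S$, not from $K_p$ itself. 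A smaller but real issue: your duality formula $H^1(\Cc,\Fi(\mug))\simeq H^0(\Cc,\Fi^\vee\otimes\omega_\Cc(-\mug))^\vee$ is only valid for locally free $\Fi$, and here $\Fi=\omega_\Cc$ need not be locally free; one should use the Ext form of duality on the curve together with $\mathcal{H}om(\omega_\Cc,\omega_\Cc)\simeq\Oc_\Cc$, which holds because the unmixed curve $\Cc$ is Cohen--Macaulay.
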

\begin{proof}  As locally at a closed point $x\in \PP$, the $\Psi_i$'s contain a regular sequence of length $s-1$, and of length $s$ unless $x\in \Cc$, by \cite[\S 1-3]{BH} there are isomorphisms 
	$$\widetilde{H_p(\bm{\sigma}) }\simeq \widetilde{\ext^{s-1}_S (S/I,\omega_S)}\simeq \widetilde{\ext^{s-1}_S (S/I_\Cc ,\omega_S)}\simeq\omega_\Cc $$
from which we deduce that
\begin{equation}\label{eq:H2BwC}
	H^2_B(H_p)\simeq \bigoplus_{\mug} H^1( \Cc  ,\omega_\Cc (\mug -\bm{\sigma})).
\end{equation}
Now, applying Serre's duality Theorem \cite[Corollary 7.7]{Hart} we get
$$H^1(\Cc ,\omega_\Cc (\mug -\bm{\sigma}))\simeq H^0(\Cc,\Oc_\Cc(-\mug +\bm{\sigma}))^\vee,$$
which concludes the proof.
\end{proof}

\begin{lem}[{\cite[Lemma 14]{BBCY20}}]\label{lem:H2BH1} In the setting of Lemma \ref{lem:H2BH2},  let $s=\dim \PP$ and let $I'$ be an ideal generated by $s$ general linear combinations of the $\Psi_i$'s. If $I^{\textrm{sat}}=I'^{\textrm{sat}}$ then for all $\mug \in \ZZ^r$ there exists an exact sequence 
$$
 H^0(\Cc,\Oc_\Cc(-\mug -\bm{\delta}+\bm{\sigma}))^\vee\rightarrow H^2_B(H_{1})_\mug \rightarrow H^2_B(S/I)_{\mug -\bm{\delta}}\rightarrow 0.
 $$
 
In particular,  if $\Cc$ has no section in degree $<\mug_0$, for some $\mug_0 \in \ZZ^r$, then
$$
H^2_B(H_{1})_\mug =0,\quad \forall \mug \in \EE (s\bm{\delta}-(n_1 ,\cdots ,n_r )-\mug_0)\cap (\bm{\delta} +\Rc_{-2} ).
$$
\end{lem}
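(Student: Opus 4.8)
The plan is to compare the Koszul homology of the $s+1$ forms cutting out $I$ with that of the subideal $I'$ generated by $s$ general combinations, turning the extra generator into a multiplication map and then reading off local cohomology. After an invertible $k$-linear change of the $\Psi_i$'s (which changes neither $I$ nor any $H^j_B(H_i)$) I may assume $I'=(\Psi_1,\ldots,\Psi_s)$ and $I=I'+(g)$ with $g:=\Psi_{s+1}$. The hypothesis $I^{\textrm{sat}}=I'^{\textrm{sat}}$ means precisely that $g\in I'^{\textrm{sat}}$, so $I/I'$ is $B$-torsion. Writing $H'_j$ for the homology of the Koszul complex of $\Psi_1,\ldots,\Psi_s$, the identification of $K_\bullet(\Psi_1,\ldots,\Psi_{s+1})$ with the mapping cone of $\cdot g$ on $K_\bullet(\Psi_1,\ldots,\Psi_s)$ produces the exact sequence
$$H'_1(-\bm{\delta})\xrightarrow{\ \cdot g\ }H'_1\lra H_1\lra (S/I')(-\bm{\delta})\xrightarrow{\ \cdot g\ }S/I',$$
that is, a short exact sequence $0\to C\to H_1\to K\to 0$ with $C:=\coker(\cdot g\colon H'_1(-\bm{\delta})\to H'_1)$ and $K:=\ker(\cdot g\colon (S/I')(-\bm{\delta})\to S/I')$.

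My next step is to compute $H^2_B$ of the two outer terms. Since $I/I'$ is $B$-torsion, the sequences $0\to I/I'\to S/I'\to S/I\to 0$ and $0\to K\to (S/I')(-\bm{\delta})\to I/I'\to 0$ give $H^2_B(S/I')\simeq H^2_B(S/I)$, hence $H^2_B(K)_\mug\simeq H^2_B(S/I)_{\mug-\bm{\delta}}$. For the term $C$ I would invoke Lemma \ref{lem:H2BH2} applied to the $s$ forms generating $I'$ (so that its internal index equals $1$ and $\bm{\sigma}$ is replaced by $\bm{\sigma}-\bm{\delta}$, using $I'^{\textrm{sat}}=I^{\textrm{sat}}$ so that $\Cc$ is unchanged): this both identifies
$$H^2_B(H'_1)_\mug\simeq H^0(\Cc,\Oc_\Cc(-\mug-\bm{\delta}+\bm{\sigma}))^\vee$$
and, at the sheaf level, shows $\widetilde{H'_1}\simeq\omega_\Cc(-\bm{\sigma}+\bm{\delta})$ is supported on $\Cc$.

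The crux is then that $g$ vanishes on $\Bc\supseteq\Cc$, so multiplication by $g$ is the zero map on the $\Oc_\Cc$-module $\widetilde{H'_1}$; consequently $\mathrm{im}(\cdot g)\subseteq H'_1$ has zero sheafification, hence is $B$-torsion, and the surjection $H'_1\twoheadrightarrow C$ has $B$-torsion kernel, so $H^i_B(C)\simeq H^i_B(H'_1)$ for every $i\geq 1$. Because $\Cc$ has dimension one, $\widetilde{H'_1}$ is a sheaf on a curve and $H^{\geq 3}_B(H'_1)=0$, whence $H^3_B(C)=0$. Feeding this into the local cohomology sequence of $0\to C\to H_1\to K\to 0$, the vanishing $H^3_B(C)=0$ forces $H^2_B(H_1)\to H^2_B(K)$ to be onto, and the identification $H^2_B(C)\simeq H^2_B(H'_1)$ turns the resulting three-term exact sequence into the asserted
$$H^0(\Cc,\Oc_\Cc(-\mug-\bm{\delta}+\bm{\sigma}))^\vee\lra H^2_B(H_{1})_\mug\lra H^2_B(S/I)_{\mug-\bm{\delta}}\lra 0.$$

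For the last assertion I would simply verify the two vanishing ranges: if $\mug\in\EE(s\bm{\delta}-(n_1,\ldots,n_r)-\mug_0)$ then each coordinate of $-\mug-\bm{\delta}+\bm{\sigma}$ is at most the corresponding coordinate of $\mug_0$ minus one, so $-\mug-\bm{\delta}+\bm{\sigma}<\mug_0$ and the left term vanishes by the no-section hypothesis; and if $\mug\in\bm{\delta}+\Rc_{-2}$ then $H^2_B(S/I)_{\mug-\bm{\delta}}=0$ by the region estimates underlying Proposition \ref{prop:vanishHiBHj}. Intersecting the two ranges kills both outer terms and forces $H^2_B(H_1)_\mug=0$. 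I expect the genuinely delicate point to be the crux step, namely recognizing—through the sheaf description supplied by Lemma \ref{lem:H2BH2}—that $\cdot g$ acts as zero on $\widetilde{H'_1}$, since this is exactly what allows $C$ to be replaced by $H'_1$ in local cohomology and thereby produces the cohomology of $\Cc$ in the first term.
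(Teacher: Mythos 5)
Your proof is correct and follows essentially the same route as the paper's: both compare the Koszul homologies of $I$ and $I'$ via the long exact sequence coming from adding one generator (the paper cites Bruns--Herzog for it, you realize it as a mapping cone), observe that the two error terms are $B$-torsion because $g\in I'^{\mathrm{sat}}$ and $I'$ annihilates the Koszul homology of $I'$, identify the left-hand term by applying Lemma \ref{lem:H2BH2} to the $s$ forms generating $I'$ (so that the relevant homological index is $1$ and $\bm{\sigma}$ becomes $\bm{\sigma}-\bm{\delta}$), and conclude with the same two vanishing ranges. The only substantive difference is that you explicitly justify the surjectivity onto $H^2_B(S/I)_{\bm{\mu}-\bm{\delta}}$ by proving $H^3_B(C)=0$ via Grothendieck vanishing for sheaves supported on the one-dimensional base locus --- a step the paper's compressed proof (``This implies that the sequence \dots is exact'') leaves implicit, so your version is if anything more complete.
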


\begin{proof} We will denote by $H_i'$ the $i^{\mathrm{th}}$ homology module of the Koszul complex associated to $I'\subset R$. By \cite[Corollary 1.6.13]{BH} and \cite[Corollary 1.6.21]{BH} we have the following graded exact sequence
\begin{equation}\label{eq:exactseq}
0 \rightarrow M \rightarrow H_{1}' \rightarrow H_{1} \rightarrow H_{0}'(-\bm{\delta}) \rightarrow N \rightarrow 0
\end{equation}
with the property that the modules $M$ and $N$ are supported on $V(B)$, which implies that $H^i_B(M)=H^i_B(N)=0$ for $i\geq 1$.

This implies that the sequence
$$
 H^2_B(H_1')\rightarrow H^2_B(H_1) \rightarrow H^2_B(H_0')(-\bm{\delta}) \rightarrow  0 
 $$ is exact. 
 By Lemma \ref{lem:H2BH2}, $H^2_B(H_1')_\mug \simeq  H^0(\Cc,\Oc_\Cc(-\mug -\bm{\delta}+\bm{\sigma}))^\vee$ and one verifies that {the equalities} $H^2_B(H_0')(-\bm{\delta})_\mug =H^2_B(S/I)(-\bm{\delta})_\mug=0$ {hold} for {all} $\mug -\bm{\delta} \in \Rc_{-2}$. 
\end{proof}

From here, the proof of Theorem \ref{thm:regstabilization} follows {by the usual consideration} of the
{C}ech-Koszul spectral sequences associated to the approximation complex of cycles of $I$ and comparison between cohomology of Koszul cycles and homologies (see \cite[\S 4.2]{BBCY20} for more details).

\subsection{Curve with no section in negative degree} To apply Theorem \ref{thm:regstabilization} it is necessary that the curve component in the base locus, if any, has no section in negative {degrees}. Therefore, we now discuss when such a property holds.
   
A reduced irreducible scheme of positive dimension in a projective space has no section in negative degrees, this is due to the fact that the section ring is finite over $R$ and has no non-zero nilpotent element. Over a product of projective spaces it is typically not the case that the section ring is finitely generated, unless the scheme is a product of projective schemes. However, for instance using Veronese-Segre embeddings, one can easily show that it has no section in degrees $<0$ (all degree is strictly negative), which is sufficient for several applications.

An interesting question is anyhow to understand in which multidegrees a scheme could have sections in a product of projective schemes, and a closely related question (equivalent for schemes satisfying $S_2$) is to determine in what twists the top cohomology of the canonical module is not zero. Another related issue is to understand, for a projective scheme, if it has sections in negative degrees and what is the geometric meaning of these. In this direction, we reproduce a result (together  with a proof) showing that symbolic powers of prime ideals determines schemes with no sections in negative degrees (unless it is of dimension zero).

\begin{lem}Let $k$ be a field, $\Cc$ a geometrically reduced curve in $\iP^n_k$ and $t>0$. Then, the natural map
$$
H^0(\iP^n_k , \cO_{\Cc^{(t)}} (\nu))\ra H^0(\iP^n_k , \cO_{\Cc^{(t-1)}} (\nu ))
$$
is injective for $\nu < t-1$. In particular {$H^0(\iP^n_k , \cO_{\Cc^{(t)}} (\nu))=0$} for $\nu< 0$ and {$H^0(\iP^n_k , \cO_{\Cc^{(t)}})=H^0(\iP^n_k , \cO_{\Cc})$}.
\end{lem}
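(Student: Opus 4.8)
The plan is to turn the asserted injectivity into a cohomology vanishing and then prove that vanishing by induction on $t$, using a differentiation map that drops the symbolic power by one. Write $S:=k[x_0,\ldots,x_n]$ for the homogeneous coordinate ring of $\iP^n_k$, let $I\subset S$ be the saturated ideal of $\Cc$, and put $I^{(0)}:=S$, so $\Cc^{(t)}=\Proj(S/I^{(t)})$ for $t\geq 0$. Sheafifying the inclusions $I^{(t)}\subseteq I^{(t-1)}$ gives a short exact sequence
$$0\ra \mathcal F_t\ra \cO_{\Cc^{(t)}}\ra \cO_{\Cc^{(t-1)}}\ra 0,\qquad \mathcal F_t:=\widetilde{I^{(t-1)}/I^{(t)}},$$
so injectivity in degree $\nu$ is equivalent to $H^0(\iP^n_k,\mathcal F_t(\nu))=0$. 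I would thus prove the statement $C(t)$: $H^0(\iP^n_k,\mathcal F_t(\nu))=0$ for all $\nu<t-1$, by induction on $t\geq 1$. The base case $t=1$ reads $\mathcal F_1=\cO_\Cc$ and $H^0(\iP^n_k,\cO_\Cc(\nu))=0$ for $\nu<0$, which is exactly the vanishing recalled in the text for a geometrically reduced scheme of positive dimension: its section ring is a finite, reduced, hence bounded-below graded $S$-algebra, so it has no nonzero element in negative degree.

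For the inductive step I would construct an injection of sheaves $\mathcal F_t\hookrightarrow \bigoplus_{j=0}^n\mathcal F_{t-1}(-1)$ from the partial derivatives, resting on two facts. First, $\mathcal F_t$ is torsion-free on $\Cc$: the associated primes of $I^{(t-1)}/I^{(t)}$ lie among those of $S/I^{(t)}$, and symbolic powers are unmixed, so these are only the minimal primes of $I$; hence $\mathcal F_t$ has no associated point at the finitely many singular points of $\Cc$, and any $\cO$-linear map out of $\mathcal F_t$ is injective as soon as it is injective at the generic point of each component. Second, each $k$-derivation $\partial_{x_j}$ sends $I^{(s)}$ into $I^{(s-1)}$; this is characteristic-free, since after localizing at a minimal prime $P$ one has $(I^{(s)})_P=(PS_P)^s$ and the Leibniz rule gives $\partial_{x_j}\!\big((PS_P)^s\big)\subseteq (PS_P)^{s-1}$. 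Applying this for $s=t$ and $s=t-1$ and passing to quotients yields a graded map
$$\bar D\colon \mathcal F_t\lra\bigoplus_{j=0}^n\mathcal F_{t-1}(-1),\qquad \overline g\mapsto(\overline{\partial_{x_j}g})_{0\le j\le n}.$$

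It then remains to check that $\bar D$ is injective at each generic point $\eta$ of $\Cc$; by torsion-freeness this is enough. Here geometric reducedness enters: it ensures $\eta$ is a smooth point, so $S_P$ is regular of dimension $n-1$, $(\mathcal F_t)_\eta\cong \mathfrak m^{t-1}/\mathfrak m^{t}$ with $\mathfrak m=PS_P$, and on the associated graded ring $\bar D$ becomes the polarization map $\sym^{t-1}V\to\bigoplus_j\sym^{t-2}V$ on $V=\mathfrak m/\mathfrak m^2$, the $n+1$ ambient directions spanning $V^\vee$. In characteristic zero this map is injective by Euler's identity (a nonzero homogeneous form of positive degree has a nonvanishing partial), which is the single place the characteristic is used. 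With $\bar D$ injective, taking cohomology gives $H^0(\mathcal F_t(\nu))\hookrightarrow\bigoplus_j H^0(\mathcal F_{t-1}(\nu-1))$, and the right-hand side vanishes once $\nu-1<t-2$ by the induction hypothesis $C(t-1)$, i.e.~for $\nu<t-1$; this proves $C(t)$.

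Finally I would read off the ``in particular'' statements by iterating the injectivity along $\Cc^{(t)}\to\Cc^{(t-1)}\to\cdots\to\Cc$: for $\nu<0$ every step is injective because $\nu<0\leq j-1$, so $H^0(\cO_{\Cc^{(t)}}(\nu))$ injects into $H^0(\cO_\Cc(\nu))=0$; for $\nu=0$ one gets an injection $H^0(\cO_{\Cc^{(t)}})\hookrightarrow H^0(\cO_\Cc)$, which is an equality since the structure map $\cO_{\Cc^{(t)}}\to\cO_\Cc$ is the identity on constants over each connected component (where the reduced algebra $H^0(\cO_\Cc)$ is a field). The main obstacle is the inductive step itself, and precisely the generic injectivity of $\bar D$: this is the only non-formal input, and also the only point at which positive characteristic would force a modification (replacing the ordinary partials by Hasse–Schmidt derivatives), the ambient setting here being of characteristic zero.
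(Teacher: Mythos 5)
Your reduction of the statement to the vanishing $H^0(\iP^n_k,\mathcal F_t(\nu))=0$ for $\nu<t-1$, the unmixedness argument showing that $\mathcal F_t$ has associated points only at the generic points of $\Cc$, and the observation that each $\partial_{x_j}$ induces a well-defined, $\cO$-linear map $\mathcal F_t\to\mathcal F_{t-1}(-1)$ (linearity holding because $g\,\partial_{x_j}f\in I^{(t-1)}$) are all correct. The genuine gap is the key injectivity in your inductive step, which fails in positive characteristic, whereas the lemma is stated over an \emph{arbitrary} field $k$. At a generic point $\eta$ your map $\bar D$ becomes the differentiation (polarization) map $\mathrm{Sym}^{t-1}V\to\bigoplus_j\mathrm{Sym}^{t-2}V$, and if $\mathrm{char}(k)=p$ divides $t-1$ this map has nonzero kernel: for $g\in\mathfrak{m}\setminus\mathfrak{m}^2$ the class of $g^{t-1}=(g^{(t-1)/p})^p$ is nonzero in $\mathfrak{m}^{t-1}/\mathfrak{m}^{t}$ (the associated graded ring is a domain) yet is killed by every $\partial_{x_j}$. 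Since your induction descends through all intermediate values $s\leq t$, and any string of $p$ consecutive integers contains a multiple of $p$, the proof of $C(t)$ breaks for every $t>p$. You acknowledge this but dismiss it on the grounds that ``the ambient setting here [is] of characteristic zero''; there is no such hypothesis in the statement, and in fact the paper remarks immediately after the theorem this lemma feeds into that in characteristic zero (and the lci, irreducible case) the result already follows from the Kodaira-type vanishing of \cite{BBL19} --- so arbitrary characteristic is precisely the point. The suggested repair by Hasse--Schmidt derivatives is plausible but is not a cosmetic patch: the order-$r$ operators map $\mathcal F_t$ to $\mathcal F_{t-r}(-r)$, so one needs a strong induction with the bookkeeping $\nu-r<(t-r)-1$, plus $\cO$-linearity modulo lower symbolic powers, the divided-power Leibniz rule, and generic injectivity via a Lucas-type argument; none of this is carried out.

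It is worth seeing how the paper's proof avoids this issue, because the two arguments use ``transposed'' maps. The paper does not differentiate; it dualizes. Applying $\Ext^\bullet_A(-,\omega_A)$ to $0\to I^{(t-1)}/I^{(t)}\to A/I^{(t)}\to A/I^{(t-1)}\to 0$ and using graded local duality, the desired injectivity becomes the vanishing $H^2_{\im}\bigl(\Ext^{n-1}_A(I^{(t-1)}/I^{(t)},\omega_A)\bigr)_\nu=0$ for $\nu>1-t$. This is proved by identifying $I^{(t-1)}/I^{(t)}$ generically with $\mathrm{Sym}^{t-1}_{A/I}(I/I^2)$ and using the conormal map $\delta\colon I/I^2\to (A/I)(-1)^{n+1}$, which is \emph{split} injective on the smooth locus in every characteristic; hence its symmetric power $\delta_t$ stays injective up to a module supported in dimension $\leq 1$, and everything reduces to $H^2_\im(\omega_{A/I})_\mu=0$ for $\mu>0$, i.e.~to the reducedness of $\Cc$. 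In short, where you use the contraction $\mathrm{Sym}^{t-1}V\to\bigoplus_j\mathrm{Sym}^{t-2}V$ (injective only when the characteristic does not interfere), the paper uses $\mathrm{Sym}^{t-1}$ of the split inclusion $V\hookrightarrow k(\eta)^{n+1}$, which is injective always; that is the characteristic-free substitute your argument needs. A secondary, smaller point: your justification of the equality $H^0(\cO_{\Cc^{(t)}})=H^0(\cO_\Cc)$ assumes the constants on each connected component are just $k$; over a non-algebraically-closed field they form a finite separable extension $L_i/k$, and surjectivity requires lifting $L_i$ along the nilpotent extension $\cO_{\Cc^{(t)}}\to\cO_\Cc$ (formal \'etaleness of $L_i/k$), not merely the remark that constants map identically.
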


\begin{proof} This is clear for $t = 1$. Let $t \geq 2$. Write $I$ for the defining ideal of $\Cc$ and
$\omega_{A/I^{(j)}}:= \Ext^{n-1}_A(A/I^{(j)}, \omega_A)$. For any $\nu$, $(\omega_{A/I^{(j)}})_\nu =H^0(\iP^n_k , \omega_{\Cc^{(j)}}(\nu ))$ and setting $-^{\vee}:=\Hom_k (-,k)$,
$$
H^0(\iP^n_k , \cO_{\Cc^(j)} (\nu))=H^1(\iP^n_k , \omega_{\Cc^{(j)}}(-\nu ))^{\vee}=H^2_{\im}(A/I^{(j)})_{-\nu}^{\vee}.
$$
Consider the exact sequence
$$
\xymatrix{
0\ar[r]& \omega_{A/I^{(t-1)}}\ar[r]& \omega_{A/I^{(t)}}\ar[r]& \Ext^{n-1}_A (I^{(t-1)}/I^{(t)},\omega_A)\ar^{\psi}[r]&\Ext^n_A (A/I^{(t-1)},\omega_A).\\}
$$
As $\Ext^n_A (A/I^{(t-1)},\omega_A)$ has finite length and the other three modules are Cohen-Macaulay of dimension two, it gives rise to {an} exact sequence,
$$
\xymatrix{
0\ar[r]&{\rm im}(\psi ) \ar[r]&H^2_\im (\omega_{A/I^{(t-1)}})\ar[r]& H^2_\im (\omega_{A/I^{(t)}})\ar[r]&H^2_\im ( \Ext^{n-1}_A (I^{(t-1)}/I^{(t)},\omega_A))\ar[r]&0\\}
$$
and it remains to show that $H^2_\im ( \Ext^{n-1}_A (I^{(t-1)}/I^{(t)},\omega_A))_\nu =0$ for $\nu >1-t$.

First notice that $\Ext^{n-1}_A (I^{(t-1)}/I^{(t)},\omega_A)\simeq \Ext^{n-1}_A (I^{t-1}/I^{t},\omega_A)\simeq \Ext^{n-1}_A (\Sym^{t-1}_A(I/I^{2}),\omega_A)$, as $I$ is generically a complete intersection. There is an exact sequence 
$$
\xymatrix{
0\ar[r]&K \ar[r]&I/I^{2}\ar^(.4){\delta}[r]& A/I[-1]^{n+1}\ar[r]&\Omega_{A/I}\ar[r]&0\\}
$$
where $K$ is supported on the locus where $\Cc$ is not a complete intersection. Furthermore, locally on the smooth locus of $\Cc$, $\delta$ is split injective.  One deduces an exact sequence,
$$
\xymatrix{
0\ar[r]&K_t \ar[r]&\Sym^{t-1}_{A/I}(I/I^{2})\ar^(.3){\delta_t}[r]&\Sym^{t-1}_{A/I}(A/I[-1]^{n+1})=A/I[-t+1]^{{n+t-1}\choose{n}},\\}
$$
with $K_t$ supported on the non complete intersection locus of $\Cc$ and $\coker (\delta_t)$ of dimension two. This in turn gives an exact sequence
$$
(\omega_{A/I} [t-1])^{{n+t-1}\choose{n}}\ra \Ext^{n-1}_A (\Sym^{t-1}_{A/I}(I/I^{2}),\omega_A)\ra \Ext^{n}_A (\coker (\delta_t),\omega_A).
$$
As $\Ext^{n}_A (\coker (\delta_t),\omega_A)$ is of dimension at most 1, it follows that the natural map $$H^2_\im((\omega_{A/I} [t-1]))^{{n+t-1}\choose{n}}\ra H^2_\im(\Ext^{n-1}_A (\Sym^{t-1}_{A/I}(I/I^{2}),\omega_A))$$ is onto. On the other hand, $H^2_\im(\omega_{A/I})_\nu \simeq H_0(\iP^n_k , \cO_{\Cc} (-\nu))^{\vee}=0$ for $\nu >0$ as $\Cc$ is reduced.
Therefore $H^2_\im(\Ext^{n-1}_A (\Sym^{t-1}_{A/I}(I/I^{2}),\omega_A))_\nu =0$ for $\nu >1-t$, and the result follows.
\end{proof}

As, by Bertini theorem, the general hyperplane section  $Y=X\cap H$  of a (geometrically) reduced scheme $X$ is a (geometrically) reduced scheme of dimension one less and $Y^{(t)}=X^{(t)}\cap H$, the exact sequence 
$$
\xymatrix{
0\ar[r]&H^0(\iP^n_k , \cO_{X^{(t)}} (\nu -1))\ar[r]&H^0(\iP^n_k , \cO_{X^{(t)}} (\nu))\ar[r]&H^0(\iP^n_k , \cO_{Y^{(t)}} (\nu))}
$$
gives by induction on the dimension~:
\begin{thm}
If $X$ is a geometrically reduced scheme with all irreducible components of positive dimension and $t>0$, then
$$
H^0(\iP^n_k , \cO_{X^{(t)}} (\nu))=0, \quad \forall \nu <0,
$$
and $H^0(\iP^n_k , \cO_{X^{(t)}})=k$ if $X$ is equidimensional and connected in codimension one.
\end{thm}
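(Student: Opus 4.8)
The plan is to argue by induction on $\dim X$, built on the short exact sequence of sheaves underlying the displayed left-exact sequence, namely
$$0 \to \cO_{X^{(t)}}(\nu-1) \xrightarrow{\cdot H} \cO_{X^{(t)}}(\nu) \to \cO_{Y^{(t)}}(\nu) \to 0,$$
where $H$ is a general hyperplane and $Y=X\cap H$. Two preliminary remarks make this sequence legitimate. First, a symbolic power is unmixed, so $X^{(t)}$ has no associated component of dimension $<\dim X$; hence a general $H$ meets no associated point of $X^{(t)}$ and $\cdot H$ is injective. Second, by Bertini the section $Y$ is again geometrically reduced with all components of positive dimension (here $\dim X\geq 2$ in the inductive step) and $Y^{(t)}=X^{(t)}\cap H$, so the cokernel is indeed $\cO_{Y^{(t)}}(\nu)$. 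The base case $\dim X=1$ is exactly the curve lemma proved above, which itself reduces the non-reduced structure to the reduced case by descending induction on $t$.

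For the vanishing statement I would feed the inductive hypothesis $H^0(\iP^n_k,\cO_{Y^{(t)}}(\nu))=0$ for $\nu<0$ into the cohomology sequence. This turns each map $H^0(\cO_{X^{(t)}}(\nu-1))\to H^0(\cO_{X^{(t)}}(\nu))$ into an isomorphism for $\nu<0$, so that $H^0(\iP^n_k,\cO_{X^{(t)}}(\nu))$ is independent of $\nu$ throughout the range $\nu\leq -1$. It then remains to pin this common value to zero, which follows from the standard vanishing $H^0(\iP^n_k,\cO_{X^{(t)}}(\nu))=0$ for $\nu\ll 0$; this rests on $X^{(t)}$ having no zero-dimensional associated components (a consequence of the unmixedness of symbolic powers and of the generic reducedness of $X$), exactly as in the reduced case. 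Running the chain of isomorphisms down to $-\infty$ forces vanishing for all $\nu<0$.

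For the second assertion I would specialize the cohomology sequence to $\nu=0$. The left-hand term $H^0(\cO_{X^{(t)}}(-1))$ vanishes by the first part, so one obtains an injection $H^0(\iP^n_k,\cO_{X^{(t)}})\hookrightarrow H^0(\iP^n_k,\cO_{Y^{(t)}})$. Provided $Y$ is again equidimensional and connected in codimension one, the inductive hypothesis identifies the target with $k$; since the constants inject on the left, the source is squeezed to be exactly $k$. The base case $\dim X=1$ is the connected geometrically reduced curve, for which the curve lemma gives $H^0(\cO_{X^{(t)}})=H^0(\cO_X)=k$.

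The main obstacle is the hypothesis transfer in this last step: one must know that a general hyperplane section of an equidimensional, connected-in-codimension-one scheme of dimension $\geq 2$ is again equidimensional and connected in codimension one. Equidimensionality is immediate, but the persistence of connectedness in codimension one is a genuine Bertini-type connectedness theorem, and it is the crux of the argument, since without it the inductive hypothesis cannot be applied to $Y$ and the injection above cannot be upgraded to an equality. I would isolate it as a separate lemma, or invoke the corresponding statement from the Bertini/connectedness literature; the secondary technical point, the $\nu\ll 0$ vanishing used to close the chain of isomorphisms, is of the same flavor and also hinges on geometric reducedness. Once both are in hand, the two inductions go through routinely.
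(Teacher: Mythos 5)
Your proposal is correct and takes essentially the same route as the paper: the paper's entire proof is the hyperplane-section exact sequence followed by the words ``gives by induction on the dimension,'' with the curve lemma as the base case, exactly as you organize it. The points you isolate --- injectivity of multiplication by a general hyperplane, the vanishing of sections for $\nu \ll 0$ (via absence of zero-dimensional associated points of the symbolic power), and the transfer of equidimensionality and connectedness in codimension one to a general hyperplane section (a Bertini-type statement) --- are precisely the details the paper leaves implicit, so your filling-in and flagging of them is consistent with, indeed more explicit than, the paper's own argument.
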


In the case $X$ is irreducible and locally a complete intersection, and $k$ has characteristic zero, the above result follows from the generalization of Kodaira vanishing proved in \cite[Theorem 1.4]{BBL19}~: $H^\ell(\iP^n_k , \cO_{X^{(t)}} (\nu))=0$ for all $\nu<0$  and $\ell<\mathrm{codim}(\mathrm{Sing}(X))$.


\begin{thebibliography}{BBCY20}

\bibitem[BBC14]{BBC14}
Nicol\'{a}s Botbol, Laurent Bus\'{e}, and Marc Chardin.
\newblock Fitting ideals and multiple points of surface parameterizations.
\newblock {\em J. Algebra}, 420:486--508, 2014.

\bibitem[BBCY20]{BBCY20}
Nicol{\'a}s Botbol, Laurent Bus{\'e}, Marc Chardin, and Fatmanur Yildirim.
\newblock {Fibers of multi-graded rational maps and orthogonal projection onto
  rational surfaces}.
\newblock {\em {SIAM Journal on Applied Algebra and Geometry}}, 4(2):322--353,
  2020.

\bibitem[BBL{\etalchar{+}}19]{BBL19}
Bhargav Bhatt, Manuel Blickle, Gennady Lyubeznik, Anurag~K. Singh, and Wenliang
  Zhang.
\newblock Stabilization of the cohomology of thickenings.
\newblock {\em Amer. J. Math.}, 141(2):531--561, 2019.

\bibitem[BC03]{BC03}
Winfried Bruns and Aldo Conca.
\newblock Gr\"obner bases and determinantal ideals.
\newblock In {\em Commutative algebra, singularities and computer algebra
  ({S}inaia, 2002)}, volume 115 of {\em NATO Sci. Ser. II Math. Phys. Chem.},
  pages 9--66. Kluwer Acad. Publ., Dordrecht, 2003.

\bibitem[BC17]{BotCh}
Nicol\'{a}s Botbol and Marc Chardin.
\newblock Castelnuovo {M}umford regularity with respect to multigraded ideals.
\newblock {\em J. Algebra}, 474:361--392, 2017.

\bibitem[BCD03]{BCD03}
Laurent Bus{\'e}, David Cox, and Carlos D'Andrea.
\newblock Implicitization of surfaces in {${\PP}\sp 3$} in the presence of base
  points.
\newblock {\em J. Algebra Appl}, 2(2):189--214, 2003.

\bibitem[BCJ09]{BCJ06}
Laurent Bus{\'e}, Marc Chardin, and Jean-Pierre Jouanolou.
\newblock Torsion of the symmetric algebra and implicitization.
\newblock {\em Proc. Amer. Math. Soc}, 137(6):1855--1865, 2009.

\bibitem[BCS10]{BCS10}
Laurent Bus{\'e}, Marc Chardin, and Aron Simis.
\newblock Elimination and nonlinear equations of {R}ees algebras.
\newblock {\em J. Algebra}, 324(6):1314--1333, 2010.
\newblock With an appendix in French by Joseph Oesterl{\'e}.

\bibitem[BH93]{BH}
Winfried Bruns and J{\"u}rgen Herzog.
\newblock {\em Cohen-{M}acaulay rings}, volume~39 of {\em Cambridge Studies in
  Advanced Mathematics}.
\newblock Cambridge University Press, Cambridge, 1993.

\bibitem[BJ03]{BuJo03}
Laurent Bus{\'e} and Jean-Pierre Jouanolou.
\newblock On the closed image of a rational map and the implicitization
  problem.
\newblock {\em J. Algebra}, 265(1):312--357, 2003.

\bibitem[Bot11]{Bot10}
Nicol{\'a}s Botbol.
\newblock Implicit equation of multigraded hypersurfaces.
\newblock {\em J. Algebra}, 348(1):381--401, 2011.

\bibitem[CCT21]{CCT21}
Marc Chardin, Steven~Dale Cutkosky, and Quang~Hoa Tran.
\newblock Fibers of rational maps and jacobian matrices.
\newblock {\em Journal of Algebra}, 571:40--54, 2021.

\bibitem[CGZ00]{CGZ00}
David Cox, Ronald Goldman, and Ming Zhang.
\newblock On the validity of implicitization by moving quadrics of rational
  surfaces with no base points.
\newblock {\em J. Symbolic Comput.}, 29(3):419--440, 2000.

\bibitem[Cha07]{Cha07}
Marc Chardin.
\newblock Some results and questions on castelnuovo-mumford regularity.
\newblock {\em Lecture Notes in Pure and Applied Mathematics}, 254:1, 2007.

\bibitem[Cha13]{Cha12}
Marc Chardin.
\newblock Powers of ideals and the cohomology of stalks and fibers of
  morphisms.
\newblock {\em Algebra and Number Theory}, 7(1):1--18, 2013.

\bibitem[CJR13]{CJR}
Marc Chardin, Jean-Pierre Jouanolou, and Ahad Rahimi.
\newblock The eventual stability of depth, associated primes and cohomology of
  a graded module.
\newblock {\em J. Commut. Algebra}, 5(1):63--92, 2013.

\bibitem[CLO98]{CLO98}
David Cox, John Little, and Donal O'Shea.
\newblock {\em Using algebraic geometry}, volume 185 of {\em Graduate Texts in
  Mathematics}.
\newblock Springer-Verlag, New York, 1998.

\bibitem[CLS11]{CoxTV}
David Cox, John Little, and Henry Schenck.
\newblock {\em {Toric varieties.}}
\newblock {Providence, RI: American Mathematical Society (AMS)}, 2011.

\bibitem[Cox01]{Cox01}
David Cox.
\newblock Equations of parametric surfaces via syzygies.
\newblock {\em Contemporary Mathematics}, 286:1--20, 2001.

\bibitem[Cox20]{CoxCBMS}
David Cox.
\newblock {\em Applications of Polynomial Systems}, volume 134.
\newblock CBMS Regional Conference Series in Mathematics, 2020.

\bibitem[Dix09]{Dixon}
A.~L. Dixon.
\newblock The eliminant of three quantics in two independent variables.
\newblock {\em Proceedings of the London Mathematical Society}, s2-7(1):49--69,
  1909.

\bibitem[EH00]{EH00}
David Eisenbud and Joe Harris.
\newblock {\em {The geometry of schemes.}}
\newblock {Graduate Texts in Mathematics. 197. New York, NY: Springer. x, 294
  p.}, 2000.

\bibitem[Far97]{Farinbook}
Gerald Farin.
\newblock {\em Curves and surfaces for computer-aided geometric design}.
\newblock Computer Science and Scientific Computing. Academic Press, Inc., San
  Diego, CA, fourth edition, 1997.

\bibitem[GKZ94]{GKZ94}
Israel Gel{\cprime}fand, Mikhail Kapranov, and Andrei Zelevinsky.
\newblock {\em Discriminants, resultants, and multidimensional determinants}.
\newblock Mathematics: Theory \& Applications. Birkh{\"a}user Boston Inc,
  Boston, MA, 1994.
  
\bibitem[GLP83]{GLP83}  
Laurent Gruson, Robert Lazarsfeld and Christian Peskine. 
\newblock On a theorem of Castelnuovo, and the equations defining space curves. 
\newblock \emph{Invent. Math.} 72:491--506, 1983.   

\bibitem[Har77]{Hart}
Robin Hartshorne.
\newblock {\em Algebraic geometry}.
\newblock Springer-Verlag, New York, 1977.
\newblock Graduate Texts in Mathematics, No. 52.

\bibitem[HSV83]{HSV}
J{\"u}rgen Herzog, Aron Simis, and Wolmer~V Vasconcelos.
\newblock Koszul homology and blowing-up rings.
\newblock In {\em Commutative algebra (Trento, 1981)}, volume~84 of {\em
  Lecture Notes in Pure and Appl. Math}, pages 79--169. Dekker, New York, 1983.

\bibitem[Jou80]{Jou80}
Jean-Pierre Jouanolou.
\newblock Id\'eaux r\'esultants.
\newblock {\em Adv. in Math.}, 37(3):212--238, 1980.

\bibitem[Jou97]{Jou97}
Jean-Pierre Jouanolou.
\newblock Formes d'inertie et r\'esultant: un formulaire.
\newblock {\em Adv. Math.}, 126(2):119--250, 1997.

\bibitem[Lvo96]{Lvo96}
Sergei L'vovsky. 
\newblock On inflection points, monomial curves, and hypersurfaces containing projective curves. 
\newblock {Math. Ann.} 306:719--735, 1996. 

\bibitem[Mac02]{Mac02}
Francis Macaulay.
\newblock Some formulae in elimination.
\newblock {\em Proc. London Math. Soc.}, 33(1):3--27, 1902.

\bibitem[Mic64]{Mic64}
Artibano Micali.
\newblock Sur les alg\`ebres universelles.
\newblock {\em Ann. Inst. Fourier}, 14(2):33--87, 1964.

\bibitem[MS04]{MlS04}
Diane Maclagan and Gregory~G. Smith.
\newblock Multigraded {C}astelnuovo-{M}umford regularity.
\newblock {\em J. Reine Angew. Math.}, 571:179--212, 2004.

\bibitem[PM10]{Patbook}
Nicholas Patrikalakis and Takashi Maekawa.
\newblock {\em Shape interrogation for computer aided design and
  manufacturing}.
\newblock Springer-Verlag, Berlin, 2010.

\bibitem[SBAD16]{Shen:2016:LNS:3045889.3064444}
Jingjing Shen, Laurent Bus{\'e}, Pierre Alliez, and Neil Dodgson.
\newblock A line/trimmed nurbs surface intersection algorithm using matrix
  representations.
\newblock {\em Comput. Aided Geom. Des.}, 48(C):1--16, 2016.

\bibitem[Tei77]{Tei77}
Bernard Teissier.
\newblock The hunting of invariants in the geometry of discriminants.
\newblock In {\em Real and complex singularities ({P}roc. {N}inth {N}ordic
  {S}ummer {S}chool/{NAVF} {S}ympos. {M}ath., {O}slo, 1976)}, pages 565--678.
  Sijthoff and Noordhoff, Alphen aan den Rijn, 1977.

\bibitem[ZCG98]{Goldman}
Ming Zhang, Eng-Wee Chionh and Ronald Goldman.
\newblock Hybrid Dixon resultants.
\newblock {\em The Mathematics of Surfaces}, 8:193--212, 1998.

\end{thebibliography}

\newcommand{\etalchar}[1]{$^{#1}$}
\def\cprime{$'$}

\end{document}